\documentclass[12pt]{amsart}
\usepackage{epsfig,color}
\usepackage{blindtext}
\usepackage{graphicx}
\usepackage{enumitem}
\usepackage{url}
\usepackage{amssymb}
\usepackage{graphicx,import}
\usepackage{comment}
\usepackage{esint}
\usepackage{xcolor}
\usepackage{mathtools}
\usepackage{comment}
\usepackage{tikz}

\allowdisplaybreaks

\usepackage[margin = 1in] {geometry}

\usepackage{hyperref}
\usepackage{dsfont}

\setcounter{section}{0}

\newtheorem{theorem}{Theorem}[section]

\newtheorem{proposition}[theorem]{Proposition}
\newtheorem{lemma}[theorem]{Lemma}
\newtheorem{corollary}[theorem]{Corollary}

\theoremstyle{definition}

\newtheorem*{remark*}{Remark}

\newtheorem{remark}[theorem]{Remark}

\theoremstyle{claim}
\newtheorem*{claim}{Claim}

\numberwithin{equation}{section}

\newcommand{\cA}{\mathcal{A}}
\newcommand{\cF}{\mathcal{F}}
\newcommand{\cH}{\mathcal{H}}
\newcommand{\cI}{\mathcal{I}}
\newcommand{\R}{\mathbf{R}}

\newcommand{\eps}{\varepsilon}
\newcommand{\g}{{g_N}}

\DeclareMathOperator{\Hess}{Hess}
\DeclareMathOperator{\Ric}{Ric}
\DeclareMathOperator{\BiRic}{BiRic}
\DeclareMathOperator{\Tr}{Tr}
\DeclareMathOperator{\Rs}{R}

\makeatletter
\def\@defaultbiblabelstyle#1{[#1]}
\makeatother

\title{Stable minimal hypersurfaces in $\mathbf{R}^5$}

\author{Otis Chodosh}
\address{Department of Mathematics, Stanford University, Building 380, Stanford, CA 94305, USA}
\email{ochodosh@stanford.edu}

\author{Chao Li}
\address{Courant Institute, New York University, 251 Mercer St, New York, NY 10012, USA}
\email{chaoli@nyu.edu}

\author{Paul Minter}
\address{Department of Mathematics, Stanford University, Building 380, Stanford, CA 94305, USA}
\email{pminter@stanford.edu}

\author{Douglas Stryker}
\address{Department of Mathematics, Stanford University, Building 380, Stanford, CA 94305, USA}
\email{dstryker@stanford.edu}

\begin{document}

\maketitle

\begin{abstract}
    We show that a complete, two-sided stable minimal hypersurface in $\mathbf{R}^5$ is flat.
\end{abstract}

\section{Introduction}
A two-sided immersion $M^n\to\R^{n+1}$ is \emph{minimal} if its mean curvature vector vanishes. A minimal immersion is \emph{stable} if 
\[
\int_M |A_M|^2 \varphi^2 \leq \int_M |\nabla \varphi|^2
\]
for all $\varphi \in C^\infty_c(M)$, where $A_M$ is the second fundamental form of the immersion. The stable Bernstein problem asks whether a complete, connected, two-sided stable minimal hypersurface in $\R^{n+1}$ must be an affine hyperplane. We resolve here the stable Bernstein problem in $\R^5$:

\begin{theorem}\label{thm:main}
A complete, connected, two-sided stable minimal immersion $M^4 \to \R^5$ is an affine hyperplane.
\end{theorem}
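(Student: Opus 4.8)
\bigskip

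\noindent\textbf{Proof strategy.} The plan is to argue by contradiction: assume $M^4\to\R^5$ is complete, connected, two-sided, stable minimal, and \emph{not} a hyperplane, so $|A_M|\not\equiv0$. Since minimality and the stability inequality pass to the universal cover, and a hyperplane is simply connected, I may assume $M$ is simply connected. Two consequences of the hypotheses are used throughout. First, because $\R^5$ is flat and $M$ is minimal, the Gauss equations give $\Ric_M=-A_M^2\le0$ and $R_M=-|A_M|^2$, and in fact all curvatures of $M$ are controlled two-sidedly by $|A_M|$; in particular $\BiRic_M\gtrsim-|A_M|^2$ on every orthonormal pair. Second, stability means the Jacobi operator $-\Delta_M-|A_M|^2$ is nonnegative, so there is a function $u>0$ on $M$ with $\Delta_Mu+|A_M|^2u=0$ — a positive superharmonic ground state — which I will use to build conformal data and to absorb $|A_M|^2$ error terms later.

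The goal is to prove that $M$ has Euclidean volume growth, $\mathcal{H}^4(B^M_r(p))\le Cr^4$: once this is known, the curvature estimates of Schoen--Simon--Yau (valid since $\dim M=4\le5$) force $A_M\equiv0$, contradicting the assumption. To reach the volume bound I would carry out a \emph{dimension descent by $\mu$-bubbles}, trading a statement about $M^4$ for one about $3$-manifolds of positive scalar curvature. Fix $p_0$ with $|A_M|(p_0)>0$ and a large radius $R$. For a prescribing function $h$ assembled from $u$ and a cutoff at scale $R$ — so that $h\sim R^{-1}$ on the bulk of $B^M_R(p_0)$ — minimize the weighted area $\Omega\mapsto\mathcal{H}^3(\partial^*\Omega)-\int_\Omega h$ among Caccioppoli sets trapped between a fixed small ball about $p_0$ and $\partial B^M_R(p_0)$. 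This produces a smooth separating hypersurface $\Sigma^3\subset M^4$ that is stable for the weighted functional. Its stability inequality carries the term $\Ric_M(\nu_\Sigma,\nu_\Sigma)+|A_\Sigma|^2$ together with $h$-terms; combining it with the Gauss equation for $\Sigma\subset M$, with the comparison $\BiRic_M\gtrsim-|A_M|^2$, and with the stability of $M$ itself (its ground state $u$, restricted to $\Sigma$, absorbs the leftover $|A_M|^2$, exactly as bi-Ricci positivity is used in Shen--Ye's work on stable hypersurfaces) shows that $\Sigma^3$, in a conformally modified metric, has scalar curvature $\gtrsim h^2\sim R^{-2}$.

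Now I invoke the geometry of $3$-manifolds with positive scalar curvature: by the Gromov--Lawson band-width inequality — or, self-containedly, by one further $\mu$-bubble carried out inside $\Sigma^3$ — such a $\Sigma$ admits no embedded band of width $\gg R$. But our $\Sigma$ separates a fixed ball about $p_0$ from $\partial B^M_R(p_0)$, so for $R$ large it must contain such a band unless $M$ is degenerately thin near $p_0$ at scale $R$; the latter is incompatible with $|A_M|(p_0)>0$ and with the monotonicity of the extrinsic mass ratios of $M$. Chasing this dichotomy and letting $R\to\infty$ forces the Euclidean volume bound above, and hence, via Schoen--Simon--Yau, $A_M\equiv0$. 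Therefore $M$ is totally geodesic, i.e.\ a flat affine hyperplane.

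The step I expect to be the main obstacle is the quantitative bookkeeping of the descent: one must choose the prescribing function $h$ and the conformal factors (assembled from $u$, from $|A_M|$, and from the distance to $p_0$) so that the $|A_M|^2$-type errors produced by $\Ric_M$ and $\BiRic_M$ are \emph{precisely} absorbed, while $h$ remains of size $\sim R^{-1}$ to feed the $3$-dimensional width estimate, all the while controlling the regularity and topology of the $\mu$-bubble $\Sigma^3$. In the $\R^4$ case the corresponding descent bottoms out at Gauss--Bonnet for surfaces; in $\R^5$ the bottom is a genuinely $3$-dimensional positive-scalar-curvature problem, so one needs the bi-Ricci comparison in tandem with quantitative PSC technology, and making the constants line up across the descent — together with the preliminary volume estimate, which is delicate without a priori curvature decay — is where the real work lies.
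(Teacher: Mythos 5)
Your outline catches the right broad strokes---reduce to the simply connected case, exploit a ground state for the stability operator, use bi-Ricci curvature of the ambient $M$, descend dimension by a warped $\mu$-bubble to a $3$-manifold $\Sigma^3$, and then invoke positive-scalar-curvature technology---but the argument does not close, and the place it breaks is precisely the place where the paper has to prove something new.

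The concrete gap: you end up with a $\Sigma^3 \subset M^4$ that (in a conformal metric) has scalar curvature $\gtrsim R^{-2}$ and, via band-width, bounded diameter; you then claim this ``forces the Euclidean volume bound'' on $M$. But bounded diameter plus positive scalar curvature for a $3$-manifold gives no bound whatsoever on $\cH^3(\Sigma)$: take a round $S^3$ and attach a huge number of thin handles, or a capped-off long thin cylinder. Without a \emph{volume} bound on the $\mu$-bubble $\Sigma^3$, there is no way to feed the Michael--Simon/Brendle isoperimetric inequality and conclude $\cH^4(B_\rho^M) \lesssim (\cH^3(\Sigma))^{4/3} \lesssim \rho^4$. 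This is the step the paper singles out as the genuine difficulty: in $\R^4$ the $\mu$-bubbles are surfaces and Gauss--Bonnet converts a spectral curvature bound into an area bound, but in $\R^5$ the $\mu$-bubbles are $3$-manifolds and no such free lunch exists. The paper's fix is two-fold and both parts are absent from your proposal: (i) the descent must produce not merely a scalar curvature bound but a \emph{spectral Ricci} positivity on $\Sigma^3$ (this is Theorem \ref{thm:mububbles}, which needs the full strength of spectral bi-Ricci positivity of the ambient conformal metric, not just $\BiRic_M \gtrsim -|A_M|^2$ absorbed by the ground state), and (ii) one then needs a \emph{spectral Bishop volume comparison} (Theorem \ref{thm:diameter_and_vol}), proved here via a weighted isoperimetric-profile argument in the spirit of Bray, which is the actual technical heart of the paper and only works because $\dim\Sigma=3$.

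Two secondary omissions compound this. You bypass the Gulliver--Lawson conformal change $\tilde g = r^{-2}g$; this is not a cosmetic choice. It is what makes the $\mu$-bubble live in a geometry where the relevant spectral curvature condition is \emph{uniform} (bi-Ricci $\gtrsim 1$ in the spectral sense, Theorem \ref{thm:spectralbiRic}), which in turn is what lets the warped $\mu$-bubble machinery output a $\Sigma^3$ with a \emph{uniform} Ricci lower bound and hence a \emph{uniform} diameter and volume bound, independent of $R$. Working directly with $h\sim R^{-1}$ at scale $R$ leaves you chasing $R$-dependent constants through the whole descent, and it is unclear the bookkeeping ever closes. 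Finally, you never invoke the isoperimetric inequality for minimal submanifolds to convert the boundary bound on $\Sigma^3$ into the volume bound on $M$, which is the last link in the chain; with it in place, the Schoen--Simon--Yau conclusion follows exactly as you say.
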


The stable Bernstein problem was resolved in $\R^3$ by do Carmo--Peng, Fischer-Colbrie--Schoen, and Pogorelov  \cite{dCP:R3,FCS:stable,P} and recently in $\R^4$ by the first- and second-named authors  \cite{CL:R4} (subsequently, two alternative proofs in $\R^4$ were found in \cite{CL:aniso} and \cite{CMR}). After the first version of this paper appeared, Mazet was able to refine our methods to resolve the stable Bernstein problem in $\R^6$ in the affirmative \cite{Mazet} (see Remark \ref{rema:further-dev}). The stable Bernstein problem remains open in $\R^7$ but holds assuming $M$ has (extrinsic) Euclidean volume growth by work of Schoen--Simon and Simons \cite{SS,Simons:cone} (for embeddings) and Bellettini \cite{B} (for immersions).  On the other hand, non-flat stable (area-minimizing) minimal immersions in $\R^{8}$ (and beyond) were found by Bombieri--de Giorgi--Giusti \cite{BDG}.

It is well-known that the validity of the stable Bernstein property is equivalent to an \emph{a priori} interior curvature bound for stable minimal hypersurfaces. We recall that a two-sided minimal immersion into a Riemannian manifold $M^n\to(X^{n+1},g)$ is stable if 
\[
\int_M (|A_M|^2+\Ric_g(\nu,\nu)) \varphi^2 \leq \int_M |\nabla \varphi|^2
\]
for all $\varphi \in C^\infty_c(M)$ (we additionally require that $\varphi|_{\partial M} = 0$ if $\partial M \neq \varnothing$), where $A_M$ is the second fundamental form of $M$ and $\Ric_g(\nu,\nu)$ is the Ricci curvature of the ambient metric $g$ in the normal direction. As in \cite[Corollary 2.5]{CLS:stable}, Theorem \ref{thm:main} implies interior curvature estimates for stable minimal immersions in $5$-manifolds that only depend on a norm for the ambient sectional curvature. 
\begin{corollary}\label{coro:curv-est}
Let $(X^5,g)$ be a complete Riemannian manifold with bounded sectional curvature $\lvert\sec_g\rvert\leq K$. Then any compact, two-sided stable minimal immersion $M^4\to (X^5,g)$ satisfies
\[
|A_M|(q) \min\{1,d_M(q,\partial M)\} \leq C(K)
\]
for all $q\in M$. 
\end{corollary}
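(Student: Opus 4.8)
The plan is to derive Corollary \ref{coro:curv-est} from Theorem \ref{thm:main} by the standard blow-up/compactness argument, following \cite[Corollary 2.5]{CLS:stable}. The key point is that stability and minimality are scale-invariant notions, so a failure of the estimate produces, after rescaling, a sequence of stable minimal immersions on larger and larger balls whose second fundamental forms are controlled; a subadapted convergence argument then yields a complete stable minimal immersion in $\R^5$ which, by Theorem \ref{thm:main}, must be flat, contradicting a normalization that forces $|A_M|=1$ at the origin.

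In detail, suppose the estimate fails. Then there is a fixed $K$, a sequence of complete Riemannian manifolds $(X_i^5, g_i)$ with $|\sec_{g_i}| \le K$, compact two-sided stable minimal immersions $M_i^4 \to X_i$, and points $q_i \in M_i$ with
\[
\lambda_i := |A_{M_i}|(q_i)\min\{1, d_{M_i}(q_i,\partial M_i)\} \to \infty.
\]
By a standard point-selection (Schoen's trick / the argument of \cite[Lemma 5.3]{SchoenSimonYau} type point-picking), one may choose $q_i$ to roughly maximize $|A_{M_i}|$ on a suitable intrinsic ball so that, setting $r_i := |A_{M_i}|(q_i)^{-1} \to 0$, the rescaled immersions $\tilde M_i \to (X_i, r_i^{-2} g_i)$ centered at $q_i$ satisfy $|A_{\tilde M_i}|(q_i) = 1$ and $|A_{\tilde M_i}| \le C$ on the intrinsic ball of radius $R_i \to \infty$ about $q_i$. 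The rescaled ambient metrics $r_i^{-2} g_i$ converge (in $C^\infty_{loc}$, after passing to normal coordinates) to the flat metric on $\R^5$, since $|\sec_{r_i^{-2} g_i}| = r_i^2 |\sec_{g_i}| \le r_i^2 K \to 0$.

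With uniform curvature bounds and, via the monotonicity formula in the nearly-Euclidean ambient setting, uniform area bounds on intrinsic balls, standard elliptic estimates give uniform $C^{k}$ bounds on the immersions in graphical coordinates. Hence a subsequence converges smoothly on compact sets to a complete, connected (after restricting to the component through the basepoint), two-sided minimal immersion $M_\infty^4 \to \R^5$ with $|A_{M_\infty}|(0) = 1$. Stability passes to the limit: for each fixed $\varphi \in C^\infty_c(M_\infty)$ the stability inequalities for $\tilde M_i$ (whose ambient Ricci terms $\to 0$) converge to the Euclidean stability inequality for $M_\infty$. By Theorem \ref{thm:main}, $M_\infty$ is a flat hyperplane, so $A_{M_\infty} \equiv 0$, contradicting $|A_{M_\infty}|(0)=1$. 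This proves the corollary.

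The main obstacle — and the only genuinely nontrivial point beyond bookkeeping — is ensuring the compactness step produces a \emph{complete} limit with the basepoint at finite distance from any boundary effects; this is exactly where the point-selection lemma is used, to guarantee that after rescaling the distance to $\partial M_i$ blows up and that $|A_{\tilde M_i}|$ stays bounded on intrinsic balls of radius $R_i \to \infty$, so that no curvature concentrates and the smooth limit is genuinely defined on all of $\R^4$. The remaining ingredients (scale-invariance of stability, convergence of the rescaled ambient metrics to $\R^5$, and upper semicontinuity of the stability inequality under smooth convergence) are routine.
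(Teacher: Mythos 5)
Your proposal is correct and takes essentially the same approach as the paper, which simply invokes the blow-up/compactness argument of \cite[Corollary 2.5]{CLS:stable} to reduce the curvature estimate to the Bernstein theorem (Theorem \ref{thm:main}); you have merely unpacked that standard argument (point selection, rescaling so the ambient curvature vanishes in the limit, smooth subsequential convergence to a complete two-sided stable minimal immersion $M_\infty^4\to\R^5$ with $|A_{M_\infty}|(0)=1$, contradiction with flatness).
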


To prove Theorem \ref{thm:main}, we show that a complete, two-sided, stable minimal immersion $M^4\to\R^5$ has intrinsic Euclidean volume growth. In fact, as in \cite{CL:aniso}, this can be localized in the spirit of Pogorelov's area bounds \cite{P} for stable minimal surfaces in $\R^3$. 

\begin{theorem}\label{thm:localize}
    Let $F: M^4 \to \R^5$ be a simply connected, two-sided stable minimal immersion so that $F(x_0) = 0 \in \R^5$ for some $x_0\in M$, $\partial M$ is connected, and $F: M \to B_{\R^5}(0, 1)$ is proper. Then
    \[ \cH^4(M_{\rho_0}^*) \leq 8\pi^2, \]
    where $M_{\rho_0}^*$ is the connected component of $F^{-1}(B_{\R^5}(0,\rho_0))$ containing $x_0$ and $\rho_0 = e^{-11\pi}$.
\end{theorem}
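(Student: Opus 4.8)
The plan is to exploit the stability inequality via a clever choice of test function together with the classical Simons identity on the minimal hypersurface, but to do so in a "localized" fashion adapted to the extrinsic ball $B_{\mathbf R^5}(0,1)$. The starting point is the observation, going back to Schoen--Yau and Pogorelov in dimension three, that on a stable minimal hypersurface one can run a logarithmic cutoff argument: for the component $M^*_\rho$ of $F^{-1}(B_{\mathbf R^5}(0,\rho))$ through $x_0$, the area ratio $\rho \mapsto \rho^{-4}\mathcal H^4(M^*_\rho)$ is "almost monotone" in a quantitative sense once one has enough curvature control. The trick in $\mathbf R^5$ (as opposed to $\mathbf R^3$) is that the bare stability inequality is not enough; following the Chodosh--Li strategy, one must iterate stability against conformally rescaled metrics and use the structure of the Simons operator in dimension $n = 4$, where the critical exponents line up favorably.

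Concretely, the steps I would carry out are: (1) fix normalizations so that $F(x_0) = \vec 0$, $M$ is simply connected with connected boundary, and $F$ is proper into the closed unit ball; (2) introduce the extrinsic distance $r(x) = |F(x)|$ and note $|\nabla r| \le 1$ and $\Delta_M r^2 = 2n = 8$ by minimality (this is the key "Euclidean" input — that coordinate functions are harmonic and $|F|^2$ is subharmonic with explicit Laplacian); (3) plug a radial test function $\varphi = \varphi(r)$, supported in $B_{\mathbf R^5}(0,1)$ and equal to a suitable power/log of $r$ on the annulus $\{\rho_0 \le r \le 1\}$, into the stability inequality, and combine with the Simons identity $\Delta_M |A|^2 \ge -2|A|^4 + 2|\nabla A|^2 + (\text{Kato-type improvement})$; (4) use the Kato inequality improvement $|\nabla A|^2 \ge (1 + \tfrac{2}{n})|\nabla |A||^2$ valid for minimal hypersurfaces, together with a rearrangement, to absorb the bad $|A|^4$ term — this is where $n = 4$ matters, since the resulting exponent $\frac{n}{n-2} = 2$ is exactly what the conformal-stability iteration can handle; (5) conclude a differential inequality for the area ratio $A(\rho) := \rho^{-4}\mathcal H^4(M^*_\rho)$ of the form $\frac{d}{d\rho}\log A(\rho) \ge -\frac{C}{\rho \log(1/\rho)}$ (or similar), valid for $\rho \in (\rho_0, 1)$; (6) integrate from $\rho_0 = e^{-21\pi}$ up to $1$, where the constant $21\pi$ is tuned precisely so that the accumulated error in the exponential is controlled, and use the trivial bound at scale $1$ — namely $\mathcal H^4(M^*_1) \le \mathcal H^4(F^{-1}(B_{\mathbf R^5}(0,1)))$ is not directly bounded, so instead one uses the mean-value / monotonicity at the point $\vec 0$ together with the fact that $M$ is properly immersed in $B_{\mathbf R^5}(0,1)$ to get $\rho^{-4}\mathcal H^4(M^*_\rho) \to \omega_4 = \pi^2/2$ as $\rho \to 0^+$ (multiplicity one at $x_0$), so that $\mathcal H^4(M^*_{\rho_0}) \le e^{\text{(integrated error)}} \cdot \omega_4 \cdot \rho_0^4 \cdot \rho_0^{-4}$... let me restate: one gets $\mathcal H^4(M^*_{\rho_0}) \le \rho_0^4 \cdot \big[\lim_{\rho\to 0}\rho^{-4}\mathcal H^4(M^*_\rho)\big] \cdot e^{\text{error}} \le 8\pi^2$ after plugging in $\rho_0 = e^{-21\pi}$ and $\omega_4 = \pi^2/2$.

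The main obstacle — and where the real work lies — is step (4)–(5): making the stability inequality yield a genuinely useful differential inequality for the area ratio rather than just a crude bound. In $\mathbf R^3$ one gets this almost for free because $|A|^2$ itself is essentially subharmonic after using stability; in $\mathbf R^4$ and $\mathbf R^5$ one needs the more delicate "$\mu$-bubble" or conformal iteration of Chodosh--Li, and the subtlety is that all cutoffs must be taken with respect to the \emph{extrinsic} radius $r = |F|$ (so that properness into $B_{\mathbf R^5}(0,1)$ guarantees compact supports), while the Simons identity and Kato inequality are \emph{intrinsic} statements. Reconciling these requires carefully tracking the gradient terms $|\nabla^M r|$ and using that $M$ is minimal so that $\Delta_M r$ has a sign and explicit form; the simply-connectedness and connectedness of $\partial M$ enter to ensure $M^*_\rho$ behaves well under the topological slicing and that no boundary terms appear with the wrong sign when integrating by parts. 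I expect the constant $8\pi^2 = 16\,\omega_4$ and the threshold $\rho_0 = e^{-21\pi}$ to emerge from optimizing the free parameters (the power in the test function and the location of the logarithmic cutoff) in this differential inequality.
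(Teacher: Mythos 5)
Your proposal takes a fundamentally different route from the paper, and it contains a gap that I do not think can be repaired along the lines you sketch.

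The paper proves Theorem~\ref{thm:localize} by combining Theorem~\ref{thm:spectralbiRic} (the Gulliver--Lawson conformal metric $\tilde g = r^{-2}g$ has spectral positive bi-Ricci curvature), Theorem~\ref{thm:mububbles} (existence of warped $\mu$-bubbles at controlled $\tilde g$-distance from $\partial M$), and Theorem~\ref{thm:diameter_and_vol} (these $3$-dimensional $\mu$-bubbles have $\tilde g$-volume at most $16\pi^2$); packaging these as Lemma~\ref{lem:tool}, one finds a connected hypersurface $\Sigma$ in $M$ separating $M^*_{\rho_0}$ from $\partial M$ with $g$-area at most $16\pi^2 e^{63\pi}\rho_0^3$, and then the Michael--Simon/Brendle isoperimetric inequality for minimal hypersurfaces converts this boundary area bound into the interior volume bound $\cH^4(M^*_{\rho_0}) \leq 8\pi^2$. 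The constant $21\pi$ is simply $20\pi$ (the distance threshold in the $\mu$-bubble construction) plus one unit of slack in converting between $\tilde g$- and $g$-distances, and $8\pi^2$ is $c_{\mathrm{iso}}(16\pi^2)^{4/3}$ with $c_{\mathrm{iso}} = (128\pi^2)^{-1/3}$.

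The critical gap in your steps (4)--(6) is this. The Schoen--Simon--Yau iteration (Simons identity + refined Kato + stability, with radial cutoffs) produces $L^p$ curvature estimates, and ultimately rigidity, only \emph{assuming} Euclidean volume growth as an a priori input; it does not by itself yield a differential inequality for the area ratio $\rho \mapsto \rho^{-4}\cH^4(M^*_\rho)$. This is precisely the obstruction that makes the stable Bernstein problem hard in dimensions $\geq 4$: stability plus Simons cannot see the volume. Your step (6) also has the monotonicity running in the wrong direction. For a minimal immersion the density ratio $\rho^{-4}\cH^4(M^*_\rho)$ is \emph{nondecreasing} in $\rho$, so $\lim_{\rho\to 0^+}\rho^{-4}\cH^4(M^*_\rho) = \omega_4$ is a \emph{lower} bound on the ratio at scale $\rho_0$, not an upper bound; to control $\cH^4(M^*_{\rho_0})$ from above one needs an upper bound propagating in from a \emph{larger} scale, and no such bound is assumed (properness into $B(0,1)$ does not give $\cH^4(M) < \infty$). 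The paper sidesteps this entirely: rather than trying to control the ratio from $\rho \to 0$, it finds a single small slice $\Sigma$ of bounded area enclosing $M^*_{\rho_0}$ and applies the isoperimetric inequality once. If you want a proof that avoids the $\mu$-bubble machinery, you would need a genuinely new mechanism to extract volume bounds from stability, which is what Sections~\ref{sec:GL}--\ref{sec:mu-geo} supply.
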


Similarly, we can give a geometric characterization of minimal hypersurfaces in $\R^5$ with finite Morse index, generalizing the well-known results of Gulliver, Fischer-Colbrie, and Osserman in $\R^3$ \cite{Gulliver,FC,Osserman}. See \cite{CL:R4} for the corresponding result in $\R^4$. Recall that a two-sided minimal immersion $M^4\to\R^5$ has \emph{finite Morse index} if
\[
\sup\{\dim V : V \subset C^\infty_c(M) \textrm{ is a subspace with } Q(f,f) < 0 \textrm{ for all } 0 \neq f \in V\} < \infty
\]
where $Q(f,f) = \int_M |\nabla f|^2 - \int_M|A_M|^2 f^2$. We additionally recall \cite[Section 2]{S:sym} that an end $E$ of a minimal immersion $M^4\to \R^5$ is \emph{regular at infinity} if it is contained in the graph of a function $w$ on a hyperplane $\Pi$ with asymptotics 
\[
w(x) = b + a|x|^{-2} +\sum_{j=1}^4 c_jx_j|x|^{-4} + O(|x|^{-4})
\]
where $a,b,c_1,\dots,c_4$ are constants and $x_1,\dots,x_4$ are Euclidean coordinates on $\Pi$. 

\begin{theorem}\label{theo:morse}
    A complete, two-sided minimal immersion $M^4\to\R^5$ has finite Morse index if and only if it has finite total curvature, i.e., $\int_M |A_M|^4 < \infty$, in which case $M$ is properly immersed, has finitely many ends, each end of $M$ is regular at infinity, and $M$ has Euclidean volume growth, namely $|M\cap B_{\mathbf{R}^5}(0,\rho)|\leq C\rho^4$ for all $\rho>0$; here $C = C(M)$. 
\end{theorem}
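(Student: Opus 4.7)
This follows from a standard cutoff argument. The Michael--Simon Sobolev inequality applied to the minimal immersion gives $(\int_M \varphi^4)^{1/2} \le C_* \int_M |\nabla\varphi|^2$ for $\varphi \in C^\infty_c(M)$. Given $\int_M |A_M|^4 < \infty$ and $\eps > 0$, choose a compact $K \subset M$ with $\int_{M\setminus K} |A_M|^4 < \eps$. For $\varphi \in C^\infty_c(M\setminus K)$, H\"older's inequality gives
\[
\int_M |A_M|^2\varphi^2 \le \Big(\int_{M\setminus K}|A_M|^4\Big)^{1/2}\Big(\int_M \varphi^4\Big)^{1/2} \le \eps^{1/2} C_*\int_M |\nabla\varphi|^2,
\]
so for $\eps < C_*^{-2}$ the complement $M\setminus K$ is stable, which bounds the Morse index of $M$ by that of the compact piece $K$.

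\textbf{Hard direction: volume growth and properness.} Now suppose $M$ has finite Morse index, and let $K \subset M$ be compact such that $F|_{M\setminus K}$ is two-sided, stable, and minimal. A standard Morse-theoretic argument (comparing the number of connected components of $M\setminus K$ to the index) shows $M$ has finitely many ends. Following the approach of \cite{CL:R4}, a simply-connected reduction --- via covers of components of $F^{-1}(B_{\R^5}(0,R))$ containing a fixed basepoint on each end, together with standard topology of stable minimal ends --- brings the problem into the hypotheses of Theorem \ref{thm:localize}; applying it at all scales (via rescaling by $R\rho_0^{-1}$) yields the area bound $\cH^4(F^{-1}(B_{\R^5}(0,R))) \le CR^4$. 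Euclidean volume growth and the monotonicity formula together give properness of $F$.

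\textbf{Blow-down and asymptotic structure.} Corollary \ref{coro:curv-est}, rescaled to its scale-invariant form in flat $\R^5$, provides the interior curvature estimate $|A_M|(p)\, d_M(p,\partial(M\setminus K)) \le C_1$ on $M\setminus K$, so $|A_M|(p) = O(|F(p)|^{-1})$ at infinity. Taking rescalings $F_i := r_i^{-1} F$ of an end $E$ with $r_i \to \infty$, the curvature and volume bounds yield subsequential convergence in $C^\infty_{loc}$ away from $0$ (and as integral varifolds globally) to a two-sided stable minimal cone $C \subset \R^5$, smooth away from the apex. Simons's classical rigidity theorem \cite{Simons:cone} rules out non-planar stable minimal hypercones in $\R^{n+1}$ for $n \le 6$; since here $n=4$, $C$ must be a hyperplane through the origin. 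Standard multiplicity-one arguments for stable minimal blow-downs (using connectedness of the end and stability on large annuli) ensure the limit is a single plane. Allard's regularity theorem then presents each end as a small-norm $C^{1,\alpha}$ graph of a function $w$ over a hyperplane outside a large ball; Schauder estimates combined with Simon's expansion \cite{S:sym} of solutions of the minimal surface equation on an exterior domain in $\R^4$ give the stated asymptotics for $w$. The expansion yields $|A_M|(x) = O(|x|^{-2})$, so $\int_E |A_M|^4 \lesssim \int_1^\infty r^{-5}\,dr < \infty$; summing over the finitely many ends completes the proof.

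\textbf{Main obstacle.} The heart of the argument is the blow-down step. The $\R^5$ stable Bernstein theorem enters through Corollary \ref{coro:curv-est}, whose scale-invariant form provides the curvature decay $|A_M|(p) = O(|F(p)|^{-1})$ needed to produce smooth (not merely varifold) convergence of blow-downs; Simons's classical hypercone theorem then identifies the limit as a hyperplane. Secondary technicalities include the multiplicity-one reduction and the simply-connected covering argument used to invoke Theorem \ref{thm:localize}.
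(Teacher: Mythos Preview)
Your overall strategy is sound, but it diverges from the paper's in a useful way. The paper obtains Euclidean volume growth by directly adapting the proof of Theorem~\ref{thm:Euc_vol_growth} to the finite-index setting (using \cite[Lemma~21]{CL:R4} to bound the number of boundary components of the enclosing $\mu$-bubble region by a fixed integer $k$), and then applies the Schoen--Simon--Yau $L^4$ curvature estimates \cite{SSY} on the stable exterior to conclude $\int_M|A_M|^4<\infty$; the remaining conclusions (properness, regular ends) are then taken as a black box from \cite{Anderson,S:sym}. Your route replaces the SSY step by a blow-down argument: curvature decay from Corollary~\ref{coro:curv-est}, identification of the tangent cone as a hyperplane via Simons, Allard graphicality, and the Simon expansion to read off $|A_M|=O(|x|^{-2})$. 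This is a legitimate and more self-contained alternative that avoids the weighted integral estimates of \cite{SSY}, at the cost of invoking the full blow-down machinery; the paper's route is shorter but leans more heavily on citations.

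One genuine gap: your invocation of Theorem~\ref{thm:localize} for the volume bound is not clean. That theorem requires $\partial M$ to be \emph{connected} and $F$ to be \emph{proper} into the ball, neither of which is automatic for a component of $F^{-1}(B_{\R^5}(0,R))$ on a finite-index (non-stable) hypersurface, and passing to a simply-connected cover does not repair either hypothesis. You should instead follow the paper and adapt the \emph{proof} of Theorem~\ref{thm:Euc_vol_growth}: the $\mu$-bubble construction (Lemma~\ref{lem:tool}) works on the stable exterior, and one needs only a uniform bound on the number of boundary components of the resulting region, which is exactly what \cite[Lemma~21]{CL:R4} supplies. With that fix, your argument goes through.
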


Previously, Tysk \cite{T} proved the same result (relying on \cite{SSY} for $n \leq 5$ and \cite{SS,B} for $n = 6$) for a complete, two-sided minimal immersion $M^n\to \R^{n+1}$ ($3\le n\le 6$) under the additional assumption of extrinsic Euclidean volume growth, i.e.~$\sup_{\rho>0} \frac{\mathrm{Vol}(M\cap B_{\R^{n+1}}(0,\rho))}{\rho^n}<\infty$. In light of Theorem \ref{theo:morse}, it would be interesting to further investigate the relation between the topology (e.g.~the number of ends) and the Morse index of finite index minimal immersions (cf.~\cite{Li2017}), and classify minimal hypersurfaces in $\R^5$ with low index (cf. \cite{ChodoshMaximoI,ChodoshMaximoII}). Another natural question here is to construct nontrivial minimal immersions with finite Morse index in $\R^{5}$ (cf.~\cite{Coutant}). We anticipate that Theorem \ref{theo:morse} and these problems should also be relevant to the study of stable/finite index minimal hypersurfaces in $5$-manifolds in a manner similar to \cite{CLS:stable}.

\subsection{Discussion of results and methods}
Let $M^n \to \R^{n+1}$ be a complete, two-sided stable minimal immersion. The main difficulty in resolving the stable Bernstein problem is that the extrinsic and intrinsic geometry of $M^n \to \R^{n+1}$ could be \emph{a priori} very complicated. For example, if $g$ is the induced metric on $M$, the manifold $(M,g)$ might have exponential volume growth. Furthermore, stability of the immersion does not directly imply any pointwise curvature condition on $g$, while minimality only implies that $g$ has \emph{non-positive} Ricci curvature via the Gauss equation. 

The strategy used in this article is motivated by the one developed by the first- and second-named authors for $M^3\to\R^4$ in \cite{CL:aniso}. More precisely, we let $g$ denote the induced metric on $M$ and consider the conformally changed metric $\tilde{g} = r^{-2}g$, where $r$ is the Euclidean distance function from the origin. This conformal change was first considered by Gulliver--Lawson in their study of isolated singularities in stable minimal hypersurfaces \cite{GL:conf}. Note that if $M$ is a hyperplane containing the origin, then  $(M,\tilde g)$ will be the standard round cylinder  $\R \times \mathbf{S}^{n-1}$.

A key insight of Gulliver--Lawson is that $\tilde{g}$ has uniformly positive scalar curvature in a weak spectral sense; namely, we have
\begin{equation}\label{eqn:schrodinger}
    -\tilde{\Delta} + \frac{1}{2}\left(\tilde{R} - \frac{n(n-2)}{2}\right) \geq 0,
\end{equation}
where $\tilde{R}$ is the scalar curvature and $\tilde{\Delta}$ is the (nonpositive) Laplace--Beltrami operator of $\tilde{g}$. It is now well-known that a 3-manifold $N^3$ with uniformly positive scalar curvature has \emph{macroscopic dimension one} in several ways:
\begin{itemize}
    \item \emph{Distance-sense}: $N$ has bounded Urysohn 1-width, see \cite{Katz} and \cite{LM:waist}.
    \item \emph{Area-sense}: $N$ admits evenly spaced separating surfaces with uniformly bounded area, see \cite{CL:soapbubbles} and \cite{CL:aniso}.
    \item \emph{Volume-sense}: If $N$ has nonnegative Ricci curvature, then $N$ has linear volume growth, see \cite{MW:vol} and \cite{CLS:vol}.
\end{itemize}
Since each of these results can be proved using the $\mu$-bubble localization method introduced by Gromov\footnote{The stability of $\mu$-bubbles, which are hypersurfaces with prescribed mean curvature, have been studied by many authors. In this paper, we make essential use of the fact that they can be localized, as observed by Gromov \cite{Gromov:mububbles}.} \cite{Gromov:mububbles} (see also \cite{Gromov:metric-inequalities}), and since the weaker condition \eqref{eqn:schrodinger} suffices to carry out the $\mu$-bubble argument, these observations hold for 3-manifolds satisfying \eqref{eqn:schrodinger}. In particular, an appropriate version of the fact that $(M,\tilde g)$ has \emph{macroscopic dimension one} was used in \cite{CL:aniso} to deduce the stable Bernstein theorem for $M^3\to \R^4$. 

In higher dimensions, the positive scalar curvature property \eqref{eqn:schrodinger} of $(M,\tilde g)$ appears to be too weak to deduce the stable Bernstein property. (In particular, $\R^{n-2}\times \mathbf{S}^2$ has uniformly positive scalar curvature.) The first main idea in the proof of Theorem \ref{thm:main} is to replace scalar curvature with a stronger curvature condition, namely \emph{bi-Ricci curvature}. The bi-Ricci curvature of two orthonormal vectors $v, w \in T_pM$ is defined as
\[ \BiRic(v,w) = \Ric(v,v) + \Ric(w,w) - \Rs(v,w,w,v) \]
where $\Rs$ is the curvature tensor. Alternatively, $\BiRic(v,w)$ is the sum of sectional curvatures of 2-planes intersecting the plane spanned by $v$ and $w$. We note that the bi-Ricci curvature of a 3-manifold is a multiple of the scalar curvature. Importantly, $\R \times \mathbf{S}^{n-1}$ has uniformly positive bi-Ricci curvature, while $\R^k\times \mathbf{S}^{n-k}$ does not for any $k > 1$. 

\begin{remark}
The bi-Ricci curvature was introduced by Shen--Ye \cite{SY:stable,SY:general} motivated by its the relationship with stable minimal hypersurfaces. More recently, Brendle--Hirsch--Johne studied  bi-Ricci curvature as part of a general notion of curvatures that interpolate between Ricci and scalar curvature \cite{BHJ}.
\end{remark}

In Theorem \ref{thm:spectralbiRic}, we prove that for $M^4\to \R^5$ a complete, two-sided stable minimal immersion, the Gulliver--Lawson conformal metric $(M,\tilde g)$ has uniformly positive bi-Ricci curvature in the weak spectral sense; namely, we have
\begin{equation}\label{eqn:schrodiner2}
    -\tilde{\Delta} + (\tilde{\lambda}_{\BiRic} - 1) \geq 0,
\end{equation}
where $\tilde{\lambda}_{\BiRic}(x)$ is the smallest bi-Ricci curvature of $\tilde{g}$ at $x$. This computation follows the general strategy introduced by Gulliver--Lawson \cite{GL:conf}, but is considerably more involved. At this point, we must leverage the improved positivity \eqref{eqn:schrodiner2} to prove ``one-dimensionality'' of $(M^4,\tilde g)$, and then use this to conclude the stable Bernstein theorem.

In \cite{xu:biRic}, Xu showed that if $N^n$ has uniformly positive bi-Ricci curvature and $n \leq 5$, then $N$ admits $\mu$-bubbles with uniformly positive Ricci curvature in the weak spectral sense. We generalize his arguments (for $n = 4$) to \emph{spectral} uniformly positive bi-Ricci curvature. This construction produces an exhaustion by (warped) $\mu$-bubbles $\Sigma$ that satisfy
\begin{equation}\label{eqn:schrodiner3}
    -\Delta^{\Sigma} + \frac{3}{4}\left(\lambda_{\Ric}^{\Sigma} - \frac{1}{2}\right) \geq 0,
\end{equation}
where $\lambda_{\Ric}^{\Sigma}(x)$ is the smallest eigenvalue of the Ricci curvature of $\Sigma$ at $x$. The spectral condition \eqref{eqn:schrodiner3} can be thought of as a weak form of uniform positivity of the Ricci curvature of the $\mu$-bubble $\Sigma$. Indeed, \eqref{eqn:schrodiner3} implies that $\Sigma$ has uniformly bounded diameter by a result of Shen--Ye \cite{SY:general} (see Theorem \ref{thm:diameter_and_vol}). In particular, this implies that $(M,\tilde g)$ has bounded Urysohn $1$-width. 

The fact that $(M,\tilde g)$ has bounded Urysohn $1$-width does confirm that a complete, two-sided stable minimal immersion $M^4\to\R^5$ has controlled geometry in a certain sense, but does not seem sufficient to conclude that $M$ is an affine hyperplane. Indeed, in the proof of the stable Bernstein theorem for $M^3\to\R^4$ in \cite{CL:aniso}, the key property of the $\mu$-bubbles is that they have bounded area (in addition to bounded diameter). In this case, when the $\mu$-bubbles are two-dimensional, the area bound follows directly from a spectral condition like \eqref{eqn:schrodiner3} thanks to the Gauss--Bonnet theorem. 

In higher dimensions (as considered in this paper) the Bishop--Gromov volume comparison gives uniform volume upper bounds from Ricci curvature lower bounds. We consider a weighted version of Bray's proof \cite{Bray:thesis} of the Bishop theorem via the isoperimetric profile and use this to deduce a sharp volume comparison theorem for $3$-dimensional manifolds having spectrally positive Ricci curvature (see Theorem \ref{thm:diameter_and_vol}). 

\begin{remark}
A different spectral version of the Bishop--Gromov theorem was obtained by Carron for manifolds with a Euclidean Sobolev inequality and a (strong) form of non-negative Ricci curvature in a spectral sense \cite{Carron:Kato}. Note that this result does not seem applicable to our setting since the $\mu$-bubbles are compact and thus cannot admit a Euclidean Sobolev inequality. 
\end{remark}

Granted these two ingredients, i.e., spectral positivity of bi-Ricci curvature for the Gulliver--Lawson conformal metric and volume bounds for the $3$-dimensional $\mu$-bubbles, we can follow the  arguments of \cite{CL:aniso} to deduce the stable Bernstein theorem for $n=4$.

\begin{remark}\label{rema:further-dev}
    There have been several exciting developments in this area that occurred after this paper was first posted. Antonelli--Xu have generalized the spectral Bishop--Gromov result to hold in all dimensions \cite{AntonelliXu:spect}. Mazet subsequently combined their spectral Bishop--Gromov result with a delicate refinement of the strategy used here to resolve the stable Bernstein problem in $\mathbf{R}^6$ \cite{Mazet}. (See also \cite{AX:note,Tam}.)
\end{remark}

\subsection{Organization}
We review the notation and conventions in Section \ref{sec:not}. Then, we compute the spectral curvature properties of the Gulliver--Lawson conformal metric in Section \ref{sec:GL}. We discuss $\mu$-bubble existence and stability in Section \ref{sec:mu} and then use the stability inequality to prove geometric estimates for the $\mu$-bubbles in Section \ref{sec:mu-geo}. Finally, in Section \ref{sec:proofs} we prove the results stated in the introduction. 

\subsection{Acknowledgements}
O.C. was supported by a Terman Fellowship and an NSF grant
(DMS-2304432). C.L. was supported by an NSF grant (DMS-2202343) and a Simons Junior Faculty Fellowship. This research was conducted during the period P.M. served as a Clay Research Fellow. We are grateful to Laurent Mazet for pointing out an error in the proof of Theorem \ref{thm:secondvar} in an earlier version of the paper as well as to Michael Eichmair, Thomas K\"orber, and the referees for their careful reading of this manuscript and several helpful suggestions.

\section{Notation and Conventions}\label{sec:not}
Let $(M^n, g)$ be a Riemannian manifold. At $p \in M$, let $\{e_i\}_{i=1}^n$ be an orthonormal basis for $T_pM$. We write $\Rs(\,\cdot\,,\,\cdot\,,\,\cdot\,,\,\cdot\,)$ for the curvature operator with the convention that $\Rs(e_i,e_j,e_j,e_i)$ is the sectional curvature of the $2$-plane spanned by $e_i,e_j$. We define at $p$:
\begin{itemize}
    \item \emph{the Ricci curvature tensor}: \[\Ric(e_1, e_1) = \sum_{i=2}^n \Rs(e_1, e_i, e_i, e_1).\]
    \item \emph{the minimum Ricci curvature scalar}: \[\lambda_{\Ric} = \inf_{|v| = 1} \Ric(v, v).\]
    \item \emph{the bi-Ricci curvature}:
    \[ \BiRic(e_1, e_2) = \sum_{i=2}^n \Rs(e_1, e_i, e_i, e_1) + \sum_{j=3}^n \Rs(e_2, e_j, e_j, e_2). \]
    \item \emph{the minimum bi-Ricci curvature scalar}: \[\lambda_{\BiRic} = \inf_{\{v, w\}\ \text{orthonormal}} \BiRic(v, w).\]
\end{itemize}

Let $M^n \to (X^{n+1},g_X)$ be a smooth immersion of codimension one that is two-sided. Let $\nu$ be a smooth unit normal vector field along $M$. We define:
\begin{itemize}
    \item \emph{the second fundamental form}: \[A(X, Y) = -\langle \nabla^N_XY, \nu \rangle.\]
    \item \emph{the mean curvature}: \[H = \sum_{i=1}^n A(e_i, e_i).\]
\end{itemize}
In particular, the unit sphere in $\R^{n+1}$ has positive mean curvature with respect to the outward unit normal. Given our  conventions, for a smooth family of such immersions $\{F_t : M^n \to (X^{n+1}, g_X)\}_{t}$, we have
\[ \frac{d}{dt}\cH^n_{F_t^*g_X}(M) = \int_{M} H_{F_t}\left\langle \nu_{F_t}, \frac{dF_t}{dt}\right\rangle\ d\cH^n_{F_t^*g}, \]
where $F_t^*g_X$ is the pullback metric under $F_t$, $H_{F_t}$ is the mean curvature of the immersion $F_t$, $\nu_{F_t}$ is the unit normal vector field to $F_t$, and $\cH^n_h$ is the $n$-dimensional Hausdorff measure with respect to the metric $h$. 

\section{Conformal Change for Stable Minimal Hypersurfaces}\label{sec:GL}
We study the geometry of stable minimal hypersurfaces in Euclidean space under a conformal change introduced by Gulliver--Lawson \cite{GL:conf} to study singularities of stable minimal hypersurfaces.

Let $F: M^n \to \R^{n+1}$ be a complete, two-sided stable minimal immersion. Let $g$ be the pullback metric on $M$. Let $r$ denote the Euclidean distance from $0$ in $\R^{n+1}$. Consider the conformal metric $\tilde{g} = r^{-2}g$ on $N = M \setminus F^{-1}(\{0\})$, as in \cite{GL:conf}. Note that $(N, \tilde{g})$ is complete.
Henceforth, we use tildes to denote quantities with respect to $\tilde{g}$; and otherwise we use the metric $g$. For instance, we let $d\mu$ and $d\tilde{\mu}$ denote the volume measures on $N$ with respect to $g$ and $\tilde{g}$ respectively.

We prove here the following result.
\begin{theorem}\label{thm:spectralbiRic}
    Suppose $n = 4$. Then there is a smooth function $V$ on $(N, \tilde{g})$ so that
    \[ V \geq 1 - \tilde{\lambda}_{\BiRic} \]
    and
    \[ \int_N |\tilde{\nabla} \psi|_{\tilde{g}}^2 d\tilde{\mu} \geq \int_N V\psi^2 d\tilde{\mu} \]
    for all $\psi \in C^{\infty}_{c}(N)$.
\end{theorem}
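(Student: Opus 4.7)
The plan is to follow the Gulliver--Lawson strategy for scalar curvature, adapted to bi-Ricci in place of scalar curvature. Two main ingredients are needed: an explicit computation of $\widetilde{\BiRic}$ in terms of the Gauss data and derivatives of $r$, and a spectral bound obtained by substituting a well-chosen test function into the stability inequality.

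First, for any $\psi \in C^{0,1}_c(N)$, I would substitute the test function $\varphi = r^{-1}\psi$ (the $n=4$ instance of the natural conformal weight $r^{-(n-2)/2}$) into the stability inequality
\[
\int_M |A|_g^2\, \varphi^2\, d\mu \leq \int_M |\nabla_g\varphi|_g^2\, d\mu.
\]
Expanding $|\nabla(r^{-1}\psi)|^2$, integrating the cross term by parts using $\mathrm{div}_g(r^{-3}\nabla r) = -3r^{-4}|\nabla r|^2 + r^{-3}\Delta_g r$, and invoking the minimality identity $\Delta_g r^2 = 2n = 8$ (equivalently $r\Delta_g r = 4 - |\nabla_g r|^2$) yields, after converting to the conformal metric via $d\mu = r^4\, d\tilde\mu$ and $|\nabla\psi|^2_g = r^{-2}|\tilde\nabla\psi|^2_{\tilde g}$, a Schr\"odinger-type inequality of the form $\int V\psi^2\, d\tilde\mu \leq \int |\tilde\nabla\psi|^2\, d\tilde\mu$ with explicit potential
\[
V = r^2|A|_g^2 + 3|\nabla_g r|^2 - 4.
\]

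Second, I would verify the pointwise bound $V \geq 1 - \tilde\lambda_{\BiRic}$. This requires computing $\widetilde{\BiRic}(\tilde e_1,\tilde e_2)$ for a $\tilde g$-orthonormal frame $\tilde e_i = r e_i$. Using the standard transformation formula for the Riemann tensor under $\tilde g = e^{2\phi}g$ with $\phi = -\log r$, the Gauss equation to rewrite $\BiRic_g(e_1,e_2)$ in $A$-components (with $H = \Tr A = 0$), and the submanifold identity $\Hess_g(r^2/2)(X,Y) = \langle X,Y\rangle + \langle F,\nu\rangle A(X,Y)$ to simplify the conformal corrections, one arrives at an explicit closed-form expression for $\widetilde{\BiRic}(\tilde e_1,\tilde e_2)$. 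The required pointwise bound then reduces---after completing the square in $A_{11}+A_{22}$ and using the orthogonal decomposition $r^2 = |F|^2 = r^2|\nabla_g r|^2 + \langle F,\nu\rangle^2$ to eliminate the position-dependent $\langle F,\nu\rangle$ term---to the automatic inequality $|\nabla_g r|^2 \leq 1$.

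The main obstacle is the tensorial nature of bi-Ricci: since $\widetilde{\BiRic}$ depends on a choice of 2-plane and one must control its infimum over all such planes by a single scalar potential $V$ extracted from the stability inequality, the algebraic juggling is far more delicate than in the Gulliver--Lawson scalar curvature case, where the fully traced quantity has no such directional dependence. Dimension $n=4$ enters critically: the trace-free condition on $A$ combined with exactly two indices orthogonal to the chosen plane yields the sharp bound $A_{33}^2 + A_{44}^2 \geq \tfrac{1}{2}(A_{11}+A_{22})^2$, which is precisely what is needed to complete the square. In higher dimensions this estimate weakens and the argument fails, consistent with the authors' remark that the computation is "considerably more involved" than its scalar-curvature predecessor.
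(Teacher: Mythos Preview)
Your proposal is correct and follows essentially the same approach as the paper: both substitute $\varphi = r^{-(n-2)/2}\psi = r^{-1}\psi$ into stability to obtain the conformal Schr\"odinger potential $V = r^2|A|^2 + 3|\nabla r|^2 - 4$, then compute $\widetilde{\BiRic}$ via the conformal change formula plus the Gauss equation, and close using Young/completing-the-square on the $\langle\vec{x},\nu\rangle(A_{11}+A_{22})$ cross term together with the trace-free identity $(A_{33}+A_{44})^2 = (A_{11}+A_{22})^2$ and $|\nabla r|^2 \leq 1$. One small remark: the paper's algebraic step (their Proposition~3.8) actually goes through for all $n\geq 3$, and the genuine dimension restriction enters instead through the sign of the $|dr|^2$-coefficient in the final comparison (their Proposition~3.11), so your attribution of the $n=4$ criticality to the Cauchy--Schwarz bound on $A_{33}^2+A_{44}^2$ is slightly off, though this does not affect the validity of your argument for $n=4$.
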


We note that Theorem \ref{thm:spectralbiRic} shows that $(N, \tilde{g})$ satisfies a weak form of the condition $\widetilde{\BiRic} \geq 1$.

\subsection{Standard calculations for Euclidean hypersurfaces}
We begin with some standard calculations about the Euclidean distance function $r$ on immersed hypersurfaces.

\begin{proposition}\label{prop:hessr}
    Let $M^n \to \R^{n+1}$ be an immersion with unit normal vector field $\nu$. Then
    \[ \Hess^M r = r^{-1}g - r^{-1}dr\otimes dr - r^{-1}\langle \vec{x}, \nu\rangle A. \]
\end{proposition}
\begin{proof}
    We compute
    \[ \partial_i r = \frac{x_i}{r},\ \ \partial_i\partial_jr = \frac{\delta_{ij}}{r} - \frac{x_ix_j}{r^3}
    \]
    Thus
    \[
    \Hess^{\R^{n+1}} r = r^{-1}g_{\text{Euc}} - r^{-1}dr\otimes dr. \]
    Hence, for $X$ and $Y$ tangent to $M$, we have
    \begin{align*}
        \Hess^M r(X, Y) & = \langle \nabla^M_X\nabla^M r, Y \rangle\\
        & = \langle \nabla^{\R^{n+1}}_X\nabla^{\R^{n+1}} r, Y \rangle - \langle \nabla^{\R^{n+1}}_X(\nabla^{\R^{n+1}} r)^{\perp}, Y \rangle\\
        & = \Hess^{\R^{n+1}} r(X, Y) + \left\langle \frac{\vec{x}^{\perp}}{r}, (\nabla_X^{\R^{n+1}}Y)^{\perp}\right\rangle\\
        & = r^{-1}g(X,Y) - r^{-1}(dr \otimes dr)(X,Y) - r^{-1}\langle \vec{x}, \nu\rangle A(X, Y),
    \end{align*}
    which concludes the proposition.
\end{proof}

We write $\phi = -\log r$ so that $\tilde g = e^{2\phi}g$.

\begin{proposition}\label{prop:hesslogr}
    Let $M^n \to \R^{n+1}$ be an immersion with unit normal vector field $\nu$. Then
    \[ \Hess^M (\log r) = r^{-2}g - 2r^{-2}dr \otimes dr - r^{-2}\langle \vec{x}, \nu\rangle A. \]
\end{proposition}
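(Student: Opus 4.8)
The statement to prove, Proposition \ref{prop:hesslogr}, is purely computational: it asks for the intrinsic Hessian of $\log r$ on $M$ given the already-established formula for $\Hess^M r$ in Proposition \ref{prop:hessr}. The plan is to apply the standard chain rule for the Hessian of a composition $f\circ u$ where $f = \log$ and $u = r$.

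First I would recall the identity
\[
\Hess^M(\log r) = \frac{1}{r}\Hess^M r - \frac{1}{r^2}\, dr \otimes dr,
\]
which follows from $\nabla^M(\log r) = r^{-1}\nabla^M r$ and differentiating: for tangent vectors $X, Y$,
\[
\Hess^M(\log r)(X,Y) = X(r^{-1})\,(dr)(Y) + r^{-1}\Hess^M r(X,Y) = -r^{-2}(dr)(X)(dr)(Y) + r^{-1}\Hess^M r(X,Y).
\]
Then I would substitute the expression from Proposition \ref{prop:hessr}, namely $\Hess^M r = r^{-1}g - r^{-1}dr\otimes dr - r^{-1}\langle \vec{x},\nu\rangle A$, to obtain
\[
\Hess^M(\log r) = r^{-1}\left(r^{-1}g - r^{-1}dr\otimes dr - r^{-1}\langle \vec{x},\nu\rangle A\right) - r^{-2}dr\otimes dr = r^{-2}g - 2r^{-2}dr\otimes dr - r^{-2}\langle\vec{x},\nu\rangle A,
\]
which is exactly the claimed formula.

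There is essentially no obstacle here; the only thing to be careful about is the bookkeeping of the coefficient on $dr\otimes dr$, which receives a contribution of $-1$ from the chain rule term and $-1$ from the $r^{-1}\Hess^M r$ term, giving the factor $-2$. So the proof is a two-line application of the chain rule for Hessians followed by direct substitution of Proposition \ref{prop:hessr}.
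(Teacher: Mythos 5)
Your proof is correct and uses the same approach as the paper: apply the chain rule identity $\Hess^M(\log f) = f^{-1}\Hess^M f - f^{-2}\,df\otimes df$ and then substitute the formula from Proposition \ref{prop:hessr}. The bookkeeping giving the coefficient $-2$ on $dr\otimes dr$ is handled correctly.
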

\begin{proof}
    For any $f > 0$, we compute
    \[ \Hess^M (\log f)(X,Y) = \left\langle \nabla^M_X\left(\frac{\nabla^M f}{f}\right), Y\right\rangle = f^{-1}\Hess^M f(X, Y) - f^{-2}(df \otimes df)(X, Y). \]
    We conclude by Proposition \ref{prop:hessr}.
\end{proof}

\subsection{Curvature in the conformal metric}
We now compute relevant curvature quantities related to the conformal metric $\tilde{g}$.

First, we set up a convenient orthonormal basis for $T_pM$ with respect to the metrics $g$ and $\tilde{g}$. Let $\{e_i\}_{i=1}^n$ be an orthonormal basis for $T_pM$ with respect to $g$. Then $\{\tilde{e}_i = re_i\}_{i=1}^n$ is an orthonormal basis for $T_pM$ with respect to $\tilde{g}$.

We are now equipped to compute the sectional curvatures of $(N, \tilde{g})$. We will write $R_{ijji} = \Rs(e_i,e_j,e_j,e_i)$, $A_{ij} = A(e_i,e_j)$, and $\tilde{R}_{ijji} = \tilde{\Rs}(\tilde e_i,\tilde e_j,\tilde e_j,\tilde e_i)$.

\begin{proposition}\label{prop:confcurv}
    In the above frame, we have
    \[ r^2R_{ijji} = \tilde{R}_{ijji} - 2 + |dr|^2 + (dr(e_i))^2 + (dr(e_j))^2 + \langle \vec{x}, \nu\rangle(A_{ii}+A_{jj}). \]
\end{proposition}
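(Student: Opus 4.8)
The plan is to derive the stated identity directly from the conformal transformation law for sectional curvature, specialized to the Gulliver--Lawson metric $\tilde g = e^{2\phi} g$ with $\phi = -\log r$. Recall (e.g.\ \cite[Theorem 7.30]{Lee}) that under a conformal change $\tilde g = e^{2\phi}g$, the $(0,4)$ curvature tensors are related by a formula involving the tensor $T = \Hess^M\phi - d\phi\otimes d\phi + \tfrac12|d\phi|^2 g$ computed in Proposition \ref{prop:T}. Concretely, for our purposes the relevant consequence is that for $g$-orthonormal $e_i, e_j$,
\[
\tilde R_{ijji} = e^{-2\phi}\big(R_{ijji} - T(e_i,e_i) - T(e_j,e_j)\big),
\]
and since $e^{-2\phi} = r^2$, this rearranges to $r^2 R_{ijji} = \tilde R_{ijji} + r^2\big(T(e_i,e_i)+T(e_j,e_j)\big)$ — note that $\tilde R_{ijji}$ is the sectional curvature with respect to the $\tilde g$-orthonormal frame $\tilde e_i = r e_i$, so the factors of $r$ there are already accounted for in the normalization. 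First I would fix the exact sign and normalization conventions for this conformal curvature formula to match the paper's conventions (the convention $A(X,Y) = -\langle \nabla^N_X Y, \nu\rangle$ and the sign of $R$ in Section \ref{sec:not}), since that is the only place an error could creep in.

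Next I would simply substitute the expression for $T$ from Proposition \ref{prop:T}, namely
\[
T = -r^{-2}g + r^{-2}\,dr\otimes dr + \tfrac12 r^{-2}|dr|^2 g + r^{-2}\langle \vec x,\nu\rangle A,
\]
and evaluate $T(e_i,e_i) + T(e_j,e_j)$. Using $g(e_i,e_i) = 1$, $(dr\otimes dr)(e_i,e_i) = (dr(e_i))^2$, and $A(e_i,e_i) = A_{ii}$, we get
\[
T(e_i,e_i)+T(e_j,e_j) = -2r^{-2} + r^{-2}\big((dr(e_i))^2 + (dr(e_j))^2\big) + r^{-2}|dr|^2 + r^{-2}\langle\vec x,\nu\rangle(A_{ii}+A_{jj}).
\]
Multiplying by $r^2$ and adding $\tilde R_{ijji}$ yields exactly the claimed identity. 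So the computation itself is a one-line substitution once the conformal formula is pinned down.

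The main obstacle — really the only subtle point — is bookkeeping the conformal curvature formula correctly: there are several competing sign and index conventions in the literature for the conformal transformation of the Riemann tensor, and one must be careful about (i) which slots of the $(0,4)$ tensor the Schouten-type correction terms occupy, (ii) the placement of the conformal factor $e^{-2\phi}$ versus working with an orthonormal frame for $\tilde g$, and (iii) consistency with the paper's curvature sign convention. I would resolve this by checking the formula against the model case: when $M$ is a flat hyperplane through the origin, $A \equiv 0$ and one expects $(N,\tilde g) = \R\times \mathbf S^{n-1}$, so all $\tilde R_{ijji}$ should be either $0$ (for a plane containing the $\R$-direction) or $1$ (for a plane tangent to the sphere factor); plugging $R_{ijji} = 0$ and the explicit form of $r$, $dr$ on a hyperplane into the claimed identity should reproduce exactly these values, which both fixes the conventions and serves as a sanity check. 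Granting the conformal curvature formula, the proposition is immediate from Proposition \ref{prop:T}, so in the writeup I would state the formula with a precise reference and then present the substitution.
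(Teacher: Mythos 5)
Your proposal is correct and follows essentially the same route as the paper: cite Lee's conformal transformation formula (Theorem 7.30), account for the frame normalization $\tilde e_i = re_i$ so that $\tilde R_{ijji} = r^4\tilde R(e_i,e_j,e_j,e_i) = r^2(R_{ijji}-T_{ii}-T_{jj})$, and then substitute the expression for $T$ from Proposition \ref{prop:T}. The sanity check against the flat-hyperplane/round-cylinder model is a nice addition for pinning down sign conventions, but the underlying computation is the same one-line substitution the paper performs.
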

\begin{proof}
    Following \cite[Theorem 7.30]{Lee}, we define the tensor
    \[ T = \Hess^M \phi - d\phi \otimes d\phi + \frac{1}{2}|d\phi|^2g. \]
    As above, we write $T_{ij} = T(e_i,e_j)$. By Proposition \ref{prop:hesslogr}, we have
    \begin{equation}\label{eqn:T}
        T = -r^{-2}g + r^{-2}dr \otimes dr + \frac{1}{2}r^{-2}|dr|^2g + r^{-2}\langle \vec{x}, \nu \rangle A.
    \end{equation}
    Using the formula for the Riemann curvature tensor under a conformal change \cite[Theorem 7.30]{Lee} and \eqref{eqn:T}, we compute
    \begin{align*}
        \tilde{R}_{ijji} & = \tilde{\Rs}(\tilde{e}_i, \tilde{e}_j, \tilde{e}_j, \tilde{e}_i) = r^4\tilde{\Rs}(e_i, e_j, e_j, e_i)\\
        & = r^2(R_{ijji} - T_{ii} - T_{jj})\\
        & = r^2R_{ijji} + 2 - |dr|^2 - (dr(e_i))^2 - (dr(e_j))^2 - \langle \vec{x}, \nu\rangle (A_{ii} + A_{jj}),
    \end{align*}
    which concludes the proposition.
\end{proof}

Taking the appropriate combinations of sectional curvatures, we use Proposition \ref{prop:confcurv} to compute the bi-Ricci curvatures of $(N, \tilde{g})$ in the case where the immersion is minimal.

\begin{proposition}\label{prop:confbiRic}
    Let $M^n \to \R^{n+1}$ be a minimal immersion with unit normal vector field $\nu$. Then we have
    \begin{align*}
        r^2\BiRic(e_1, e_2) & = \widetilde{\BiRic}(\tilde{e}_1, \tilde{e}_2) - (4n-6) + (2n-1)|dr|^2\\
        & \hspace{0.5cm} + (n-3)(dr(e_1)^2 + dr(e_2)^2)\\
        & \hspace{0.5cm} + (n-3)\langle\vec{x}, \nu\rangle(A_{11} + A_{22}).
    \end{align*}
\end{proposition}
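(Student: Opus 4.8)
The plan is to reduce everything to Proposition \ref{prop:confcurv} by a term-by-term accounting. Writing $K_{ij}:=R_{ijji}$ and $\tilde K_{ij}:=\tilde R_{ijji}$ for the relevant sectional-curvature quantities, the definition of bi-Ricci curvature gives
\[
\BiRic(e_1,e_2)=K_{12}+\sum_{k=3}^{n}\bigl(K_{1k}+K_{2k}\bigr),
\]
and, since $\{\tilde e_i=re_i\}$ is $\tilde g$-orthonormal, the same combination of $\tilde K_{ij}$'s equals $\widetilde{\BiRic}(\tilde e_1,\tilde e_2)$. Hence $r^2\BiRic(e_1,e_2)$ is a sum of $2(n-2)+1$ instances of the formula in Proposition \ref{prop:confcurv}: one for the plane $\{e_1,e_2\}$, and one each for the planes $\{e_1,e_k\}$ and $\{e_2,e_k\}$ with $3\le k\le n$. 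Each instance contributes a $\tilde K$-term, a constant $-2$, a term $|dr|^2$, two directional terms $dr(e_\bullet)^2$, and a term $\langle\vec x,\nu\rangle(A_{\bullet\bullet}+A_{\bullet\bullet})$.

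I would then collect terms by type. The $\tilde K$-terms reassemble into $\widetilde{\BiRic}(\tilde e_1,\tilde e_2)$, and the constants sum to $-2-2(n-2)-2(n-2)=-(4n-6)$. For the remaining two groups there are two structural identities to exploit. First, completeness of the $g$-orthonormal frame gives $\sum_{k=1}^{n}dr(e_k)^2=|dr|^2$, so each internal sum $\sum_{k=3}^{n}dr(e_k)^2$ produced by the $K_{1k}$ and $K_{2k}$ instances rewrites as $|dr|^2-dr(e_1)^2-dr(e_2)^2$; tallying coefficients then yields $(2n-1)|dr|^2$ and $(n-3)\bigl(dr(e_1)^2+dr(e_2)^2\bigr)$. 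Second, minimality gives $\sum_{k=1}^{n}A_{kk}=H=0$, hence $\sum_{k=3}^{n}A_{kk}=-A_{11}-A_{22}$; substituting this into the $K_{1k}$ and $K_{2k}$ instances and collecting the coefficients of $\langle\vec x,\nu\rangle$ yields $(n-3)(A_{11}+A_{22})$. Assembling the four pieces gives the claimed identity.

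There is no genuine obstacle here beyond careful bookkeeping; the one thing to watch is that the plane $\{e_1,e_2\}$ is counted exactly once — it occurs among the $\{e_1,e_k\}$ planes but not among the $\{e_2,e_k\}$ planes — since an indexing slip there would corrupt the constant and the $dr(e_\bullet)^2$ coefficients. I would record the formula for general $n$ rather than just $n=4$: the coefficients $(n-3)$ vanish precisely when $n=3$, reflecting that the bi-Ricci curvature of a $3$-manifold is a multiple of its scalar curvature, and for $n=4$ the surviving term $(n-3)\langle\vec x,\nu\rangle(A_{11}+A_{22})=\langle\vec x,\nu\rangle(A_{11}+A_{22})$ is exactly the piece not controlled by the conformal factor alone — this is the term that will have to be absorbed using the stability inequality in the proof of Theorem \ref{thm:spectralbiRic}.
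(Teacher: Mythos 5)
Your proposal is correct and takes essentially the same approach as the paper: apply Proposition \ref{prop:confcurv} to each sectional-curvature term appearing in $\BiRic(e_1,e_2)$, then collect the constants, the $dr$-terms (using $\sum_k dr(e_k)^2=|dr|^2$), and the $A$-terms (using $\Tr A=0$). The paper organizes the sum as $\sum_{i=2}^n R_{1ii1}+\sum_{j=3}^n R_{2jj2}$ rather than splitting off $R_{1221}$ as you do, but the bookkeeping and the resulting coefficients are identical.
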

\begin{proof}
    Using Proposition \ref{prop:confcurv}, we compute
    \begin{align*}
        r^2\BiRic(e_1, e_2) & = \sum_{i=2}^n r^2R_{1ii1} + \sum_{j=3}^n r^2R_{2jj2}\\
        & = \widetilde{\BiRic}(\tilde{e}_1, \tilde{e}_2) - (4n-6) + (2n-3)|dr|^2\\
        & \hspace{0.5cm} + 2|dr|^2 + (n-3)(dr(e_1)^2 + dr(e_2)^2)\\
        & \hspace{0.5cm} + 2\langle\vec{x}, \nu\rangle \Tr(A) + (n-3)\langle\vec{x}, \nu\rangle(A_{11} + A_{22})\\
        & = \widetilde{\BiRic}(\tilde{e}_1, \tilde{e}_2) - (4n-6) + (2n-1)|dr|^2\\
        & \hspace{0.5cm} + (n-3)(dr(e_1)^2 + dr(e_2)^2)\\
        & \hspace{0.5cm} + (n-3)\langle\vec{x}, \nu\rangle(A_{11} + A_{22}).
    \end{align*}
    In the last equality we used that the trace of $A$ is zero for a minimal hypersurface.
\end{proof}

To exploit the stability inequality, we use the Gauss equation to express $\BiRic$ in terms of the second fundamental form of $M$.

\begin{proposition}\label{prop:gausseqn}
    Let $M^n \to \R^{n+1}$ be a minimal immersion. Then
    \[ \BiRic(e_1, e_2) = -\sum_{i=1}^n A_{1i}^2 - \sum_{j=2}^n A_{2j}^2 - A_{11}A_{22}. \]
\end{proposition}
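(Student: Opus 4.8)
The plan is to prove Proposition \ref{prop:gausseqn} by a direct application of the Gauss equation for a minimal hypersurface $M^n \to \R^{n+1}$, combined with the definition of bi-Ricci curvature and the vanishing of the trace of $A$. Since $\R^{n+1}$ is flat, the Gauss equation reads $R_{ijkl} = A_{ik}A_{jl} - A_{il}A_{jk}$, so in particular the sectional curvature terms are $R_{ijji} = A_{ij}^2 - A_{ii}A_{jj}$. The idea is simply to substitute this into the definition $\BiRic(e_1,e_2) = \sum_{i=2}^n R_{1ii1} + \sum_{j=3}^n R_{2jj2}$ and bookkeep the resulting sums.

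Carrying this out: first I would write $\sum_{i=2}^n R_{1ii1} = \sum_{i=2}^n (A_{1i}^2 - A_{11}A_{ii}) = \sum_{i=2}^n A_{1i}^2 - A_{11}\sum_{i=2}^n A_{ii}$. Because $M$ is minimal, $\Tr(A) = \sum_{i=1}^n A_{ii} = 0$, so $\sum_{i=2}^n A_{ii} = -A_{11}$, giving $\sum_{i=2}^n R_{1ii1} = \sum_{i=2}^n A_{1i}^2 + A_{11}^2 = \sum_{i=1}^n A_{1i}^2$. Similarly, $\sum_{j=3}^n R_{2jj2} = \sum_{j=3}^n A_{2j}^2 - A_{22}\sum_{j=3}^n A_{jj}$, and from $\Tr(A) = 0$ we get $\sum_{j=3}^n A_{jj} = -A_{11} - A_{22}$, so this equals $\sum_{j=3}^n A_{2j}^2 + A_{22}A_{11} + A_{22}^2 = \sum_{j=2}^n A_{2j}^2 + A_{11}A_{22}$.

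Adding the two pieces gives $\BiRic(e_1,e_2) = \sum_{i=1}^n A_{1i}^2 + \sum_{j=2}^n A_{2j}^2 + A_{11}A_{22}$. This has the opposite sign from the claimed identity, which signals that I should double-check the sign convention in the paper's Gauss equation: with the convention $A(X,Y) = -\langle \nabla^N_X Y, \nu\rangle$ and the curvature convention implicit in $\Ric(e_1,e_1) = \sum_{i\geq 2} R(e_1,e_i,e_i,e_1)$, the Gauss equation for a minimal hypersurface in flat space actually yields $R_{ijji} = -A_{ij}^2 + A_{ii}A_{jj}$ (equivalently the intrinsic curvature is $\leq 0$ in the directions where $A$ is diagonal, consistent with the traced Gauss equation giving nonpositive scalar curvature mentioned in the introduction). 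Redoing the arithmetic with this sign flips every term, producing exactly $\BiRic(e_1,e_2) = -\sum_{i=1}^n A_{1i}^2 - \sum_{j=2}^n A_{2j}^2 - A_{11}A_{22}$.

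The only genuine subtlety — and hence the step most worth being careful about — is pinning down the sign convention so that the Gauss equation is applied consistently with the curvature and second-fundamental-form conventions fixed in Section \ref{sec:not}; once that is fixed the proof is a two-line computation using $\Tr(A) = 0$. There is no analytic obstacle here; this proposition is purely algebraic, and its role is to feed the stability inequality later, where the sum-of-squares structure on the right-hand side will be exploited.
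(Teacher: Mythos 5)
Your proof is correct and takes essentially the same approach as the paper: apply the Gauss equation $R_{ijji} = A_{ii}A_{jj} - A_{ij}^2$ to each term in the definition of $\BiRic(e_1,e_2)$ and then use $\Tr A = 0$ to simplify. The sign check you perform (the intrinsic scalar curvature of a minimal hypersurface must be $-|A|^2 \le 0$) correctly resolves the only subtlety and pins down the same convention the paper uses.
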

\begin{proof}
    Using the Gauss equation and $\Tr A = 0$, we compute
    \begin{align*}
        \BiRic(e_1, e_2)
        & = \sum_{i=2}^n R_{1ii1} + \sum_{j=3}^n R_{2jj2}\\
        & = \sum_{i=2}^n (A_{11}A_{ii} - A_{1i}^2) + \sum_{j=3}^n (A_{22}A_{jj} - A_{2j}^2)\\
        & = -\sum_{i=1}^n A_{1i}^2 - \sum_{j=2}^n A_{2j}^2 - A_{11}A_{22}.
    \end{align*}
This completes the proof. \end{proof}

Choose the basis vectors $e_1$ and $e_2$ so that $\widetilde{\BiRic}(\tilde{e}_1, \tilde{e}_2) = \tilde{\lambda}_{\BiRic}$. We can now bound $|A|^2$ in terms of $\tilde{\lambda}_{\BiRic}$.

\begin{proposition}\label{prop:r2a2}
    Let $M^n \to \R^{n+1}$ be a minimal immersion with unit normal vector field $\nu$. For $n \geq 3$, we have
    \[ r^2|A|^2 \geq \frac{2}{n-2}\left((3n-3) - (2n-1)|dr|^2 - \tilde{\lambda}_{\BiRic}\right). \]
\end{proposition}
\begin{proof}
    Combining Propositions \ref{prop:confbiRic} and \ref{prop:gausseqn}, we compute
    \begin{align}\label{eqn:combo1}
        r^2\Big(\sum_{i=1}^n A_{1i}^2 & + \sum_{j=2}^n A_{2j}^2 + A_{11}A_{22}\Big) + (n-3)\langle \vec{x}, \nu\rangle(A_{11} + A_{22})\\
        & = (4n-6) - (2n-1)|dr|^2 - (n-3)(dr(e_1)^2 + dr(e_2)^2) - \tilde{\lambda}_{\BiRic}.\notag
    \end{align}
    Using $\langle \vec{x}, \nu\rangle = rdr(\nu)$ and Young's inequality, we have
    \[ |(n-3)\langle \vec{x}, \nu\rangle(A_{11} + A_{22})| \leq (n-3)dr(\nu)^2 + \frac{n-3}{4}r^2(A_{11} + A_{22})^2. \]
    Combined with \eqref{eqn:combo1} and the fact that $dr(e_1)^2 + dr(e_2)^2 + dr(\nu)^2 \leq 1$, we find
    \begin{align}\label{eqn:newineq1}
        r^2\Big(\sum_{i=1}^n A_{1i}^2 + \sum_{j=2}^n A_{2j}^2 + A_{11}A_{22} + & \frac{n-3}{4}(A_{11}+A_{22})^2\Big)\\
        & \geq (3n-3) - (2n-1)|dr|^2 - \tilde{\lambda}_{\BiRic}.\notag
    \end{align}
    Now we compute (using the fact that $\Tr A = 0$), with $\sigma \in (0, 1)$ arbitrary,
    \begin{align*}
        A_{11}^2 + A_{22}^2 & + A_{11}A_{22} + \frac{n-3}{4}(A_{11}+A_{22})^2
        = \frac{1}{2}(A_{11}^2 + A_{22}^2) + \frac{n-1}{4}(A_{11}+A_{22})^2\\
        & = \frac{1}{2}(A_{11}^2+A_{22}^2) + \frac{n-1}{4}\sigma(A_{11}+A_{22})^2 + \frac{n-1}{4}(1-\sigma)(A_{33} + \hdots + A_{nn})^2\\
        & \leq \left(\frac{1}{2} + \frac{n-1}{2}\sigma\right)(A_{11}^2 + A_{22}^2) + \frac{(n-1)(n-2)}{4}(1-\sigma)(A_{33}^2 + \hdots + A_{nn}^2).
    \end{align*}
    Taking $\sigma = \frac{n-3}{n-1}$, we have
    \begin{equation}\label{eqn:newineq2}
        A_{11}^2 + A_{22}^2 + A_{11}A_{22} + \frac{n-3}{4}(A_{11}+A_{22})^2
        \leq \frac{n-2}{2}(A_{11}^2 + \hdots + A_{nn}^2).
    \end{equation}
    Hence, for $n \geq 3$, we have
    \begin{align*}
        \frac{n-2}{2}r^2|A|^2
        & \geq r^2\Big(\frac{n-2}{2}\sum_{i=1}^n A_{ii}^2 + \sum_{i=2}^n A_{1i}^2 + \sum_{j=3}^n A_{2j}^2\Big)\\
        & \geq r^2\Big(\sum_{i=1}^n A_{1i}^2 + \sum_{j=2}^n A_{2j}^2 + A_{11}A_{22} + \frac{n-3}{4}(A_{11}+A_{22})^2\Big)\\
        & \geq (3n-3) - (2n-1)|dr|^2 - \tilde{\lambda}_{\BiRic},
    \end{align*}
    where we used \eqref{eqn:newineq2} in the second inequality and \eqref{eqn:newineq1} in the third inequality.
\end{proof}

\subsection{Stability inequality in the conformal metric}

\begin{proposition}\label{prop:conflaplogr} Let $M^n \to \R^{n+1}$ be a minimal immersion. We have
\[ \tilde{\Delta}^M (\log r) = n - n|dr|^2. \]
\end{proposition}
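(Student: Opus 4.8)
The statement to prove is Proposition~\ref{prop:conflaplogr}: $\tilde{\Delta}_M(\log r) = n - n|dr|^2$.

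\medskip

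The plan is to combine the conformal transformation law for the Laplacian with the already-computed trace of $\Hess^M(\log r)$. Recall that for a conformal change $\tilde{g} = e^{2\phi} g$ in dimension $n$, the Laplace--Beltrami operators are related by $\tilde{\Delta}_M u = e^{-2\phi}\bigl(\Delta_M u + (n-2)\,g(\nabla^M\phi, \nabla^M u)\bigr)$. Here $\phi = -\log r$, so $e^{-2\phi} = r^2$, and the function we are applying the operator to is $u = \log r = -\phi$. Thus the cross term is $(n-2)\,g(\nabla^M\phi, \nabla^M u) = -(n-2)|\nabla^M \log r|^2_g$, and we get $\tilde{\Delta}_M(\log r) = r^2\bigl(\Delta_M(\log r) - (n-2)|\nabla^M \log r|_g^2\bigr)$.

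\medskip

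Next I would compute the two ingredients on the right. First, $\Delta_M(\log r) = \Tr_g \Hess^M(\log r)$, which by Proposition~\ref{prop:hesslogr} equals $r^{-2}\Tr_g g - 2r^{-2}\Tr_g(dr\otimes dr) - r^{-2}\langle \vec x, \nu\rangle \Tr_g A = r^{-2}(n - 2|dr|^2) - 0$, using $\Tr_g g = n$, $\Tr_g(dr\otimes dr) = |\nabla^M r|_g^2 = |dr|^2$ (here $|dr|^2$ denotes the squared norm of the tangential part of the Euclidean gradient of $r$), and $\Tr_g A = H = 0$ by minimality. Second, $|\nabla^M \log r|_g^2 = r^{-2}|\nabla^M r|_g^2 = r^{-2}|dr|^2$. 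Substituting both into the conformal formula gives
\[
\tilde{\Delta}_M(\log r) = r^2\Bigl(r^{-2}(n - 2|dr|^2) - (n-2)r^{-2}|dr|^2\Bigr) = n - 2|dr|^2 - (n-2)|dr|^2 = n - n|dr|^2,
\]
as claimed.

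\medskip

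I do not anticipate a serious obstacle here: this is a direct computation once Proposition~\ref{prop:hesslogr} is in hand and the standard conformal Laplacian formula is invoked. The only points requiring minor care are bookkeeping of the conformal factor (whether one writes $\tilde g = e^{2\phi} g$ with $\phi = -\log r$ or $\phi = +\log r$, and the resulting sign in the first-order term) and remembering that the minimality hypothesis $H = 0$ is what kills the $\langle \vec x, \nu\rangle A$ contribution to the trace. An alternative, equally short route avoiding the conformal formula is to use the general identity $\tilde\Delta_M u = e^{-2\phi} \Tr_g(\Hess^M u) + (n-2) e^{-2\phi} g(\nabla \phi, \nabla u)$ derived from $\tilde\Delta_M u = \Tr_{\tilde g}\tilde\Hess\, u$ together with the transformation of Christoffel symbols; either way the computation is the same length.
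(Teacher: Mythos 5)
Your proof is correct and follows the same route as the paper: invoke the conformal transformation law for the Laplacian, trace Proposition~\ref{prop:hesslogr} (using minimality $\Tr A = H = 0$ to kill the $\langle \vec x,\nu\rangle A$ term), and substitute. The paper merely writes the intermediate step $r^2(\Delta_M(\log r) - (n-2)r^{-2}|dr|^2)$ more tersely; the content is identical.
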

\begin{proof}
    By the formula for the Laplace--Beltrami operator under a conformal transformation (see for instance \cite[Lemma 2.1]{GL:conf}) and Proposition \ref{prop:hesslogr} (using $\mathrm{Tr}A = 0$), we have
    \begin{align*}
        \tilde{\Delta}^M(\log r)
        & = r^2(\Delta^M (\log r) - (n-2)r^{-2}|dr|^2)\\
        & = n - 2|dr|^2 - (n-2)|dr|^2 = n - n|dr|^2,
    \end{align*}
    as desired.
\end{proof}

We can now rewrite the stability inequality in the metric $\tilde{g}$.

\begin{proposition}\label{prop:confstability}
    Let $M^n \to \R^{n+1}$ be a two-sided stable minimal immersion. Then for all $\psi \in C^{\infty}_c(N)$ we have
    \[ \int_N |\tilde{\nabla}\psi|_{\tilde{g}}^2d\tilde{\mu} \geq \int_N\Big(r^2|A|^2 -\frac{n(n-2)}{2} + \Big(\frac{n(n-2)}{2} - \frac{(n-2)^2}{4}\Big)|dr|^2\Big)\psi^2d\tilde{\mu}. \]
\end{proposition}
\begin{proof}
    Using
    \[ d\tilde{\mu} = r^{-n}d\mu \ \ \text{and}\ \ |\tilde{\nabla}f|^2_{\tilde{g}} = r^2|\nabla f|^2, \]
    the stability inequality for $M$ can be written as
    \[ \int_N r^{n-2}|\tilde{\nabla}f|_{\tilde{g}}^2d\tilde{\mu} \geq \int_Nr^{n-2}(r^2|A|^2)f^2d\tilde{\mu} \]
    for any $f \in C^{\infty}_c(N)$. We take $f = r^{\frac{2-n}{2}}\psi$ for $\psi \in C^{\infty}_c(N)$. Then
    \[ \tilde{\nabla} f = r^{\frac{2-n}{2}}\tilde{\nabla}\psi - \frac{n-2}{2}r^{-\frac{n}{2}}\psi\tilde{\nabla}r. \]
    In particular,
    \[
    |\tilde{\nabla} f|_{\tilde{g}}^2 = a+b+c
    \]
    where 
    \[
    a:=r^{2-n}|\tilde{\nabla}\psi|_{\tilde{g}}^2, \quad b := \frac{(n-2)^2}{4}r^{-n}\psi^2|\tilde{\nabla}r|_{\tilde{g}}^2, \quad c := - (n-2)r^{1-n}\psi\langle\tilde{\nabla}\psi, \tilde{\nabla}r\rangle_{\tilde{g}}.
    \]
    We have
    \[ \int_N r^{n-2}a\, d\tilde{\mu} = \int_N |\tilde{\nabla}\psi|^2_{\tilde{g}}d\tilde{\mu}. \]
    Since $r^{-2}|\tilde{\nabla}r|^2_{\tilde{g}} = |dr|^2$, we have
    \[ \int_N r^{n-2}b\, d\tilde{\mu} = \int_N \frac{(n-2)^2}{4}|dr|^2\psi^2d\tilde{\mu}. \]
    Finally, we use integration by parts and Proposition \ref{prop:conflaplogr} to compute
    \begin{align*}
        \int_N r^{n-2}c\, d\tilde{\mu} & = -\int_N \frac{n-2}{2}\langle \tilde{\nabla}(\psi^2), \tilde{\nabla}(\log r)\rangle_{\tilde{g}}d\tilde{\mu}\\
        & = \int_N \frac{n-2}{2}\tilde{\Delta}(\log r) \psi^2d\tilde{\mu}\\
        & = \int_N \Big(\frac{n(n-2)}{2} - \frac{n(n-2)}{2}|dr|^2\Big)\psi^2d\tilde{\mu}.
    \end{align*}
    The assertion follows from the above expressions.
\end{proof}

We now rephrase our estimate for $r^2|A|^2$ from Proposition \ref{prop:r2a2} to suit this form of the stability inequality.

\begin{proposition}\label{prop:confconclusion}
    Let $M^n \to \R^{n+1}$ be a minimal immersion. For $3 \leq n \leq 5$, we have
    \[ r^2|A|^2 -\frac{n(n-2)}{2} + \Big(\frac{n(n-2)}{2} - \frac{(n-2)^2}{4}\Big)|dr|^2 \geq \frac{2}{n-2}\Big(\frac{(2-n)(n^2-4n-4)}{8} - \tilde{\lambda}_{\BiRic}\Big). \]
\end{proposition}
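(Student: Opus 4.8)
\subsection*{Proof proposal}

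The plan is to substitute the lower bound for $r^2|A|^2$ from Proposition \ref{prop:r2a2} directly into the left-hand side and then optimize in the single remaining scalar $t := |dr|^2 \in [0,1]$ (recall $|dr|_g \leq 1$ since $r$ is $1$-Lipschitz on $\R^{n+1}$; equivalently $dr(e_1)^2 + dr(e_2)^2 + dr(\nu)^2 \leq 1$, as already used in the proof of Proposition \ref{prop:r2a2}). First I would rewrite the left-hand side using the identity $\tfrac{n(n-2)}{2} - \tfrac{(n-2)^2}{4} = \tfrac{n^2-4}{4}$, and then replace $r^2|A|^2$ by $\tfrac{2}{n-2}\big((3n-3) - (2n-1)t - \tilde{\lambda}_{\BiRic}\big)$. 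Since the coefficients of $\tilde{\lambda}_{\BiRic}$ on both sides of the asserted inequality already agree (both equal $-\tfrac{2}{n-2}$), it remains only to verify an inequality between two affine functions of $t$ on $[0,1]$, namely
\[ \Big(\tfrac{6(n-1)}{n-2} - \tfrac{n(n-2)}{2}\Big) + \Big(\tfrac{n^2-4}{4} - \tfrac{2(2n-1)}{n-2}\Big)t \;\geq\; \tfrac{2}{n-2}\cdot\tfrac{(2-n)(n^2-4n-4)}{8}. \]

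The key observation is that the coefficient of $t$ here, which after clearing denominators equals $\tfrac{n^3 - 2n^2 - 20n + 16}{4(n-2)}$, is negative precisely when $n \in \{3,4,5\}$ (it takes the values $-\tfrac{35}{4}$, $-4$, $-\tfrac{3}{4}$ there, and becomes positive at $n=6$). This is exactly where the dimension restriction $n \leq 5$ enters. Consequently the affine function of $t$ is decreasing on $[0,1]$, so its minimum is attained at $t = 1$. Evaluating at $t=1$: the $\tfrac{1}{n-2}$-terms telescope, $\tfrac{6(n-1) - 2(2n-1)}{n-2} = 2$, while $-\tfrac{n(n-2)}{2} + \tfrac{n^2-4}{4} = -\tfrac{(n-2)^2}{4}$, giving the value $2 - \tfrac{(n-2)^2}{4} = \tfrac{-n^2+4n+4}{4}$. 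A direct computation shows $\tfrac{2}{n-2}\cdot\tfrac{(2-n)(n^2-4n-4)}{8} = \tfrac{-(n^2-4n-4)}{4} = \tfrac{-n^2+4n+4}{4}$ as well, so equality holds at $t = 1$ and the claimed inequality follows for all $t \in [0,1]$.

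I do not expect a genuine obstacle in this argument: it is a one-variable affine estimate. The only points requiring care are (i) correctly computing the sign of the coefficient of $|dr|^2$, which is what pins down $3 \leq n \leq 5$ and fails for $n \geq 6$, and (ii) checking that the bound is in fact sharp at $|dr|^2 = 1$, which is what forces the slightly opaque constant $\tfrac{(2-n)(n^2-4n-4)}{8}$ appearing on the right-hand side. (In particular, one does not need the trivial bound $r^2|A|^2 \geq 0$; Proposition \ref{prop:r2a2} alone suffices, and the resulting estimate is already optimal in the above sense.)
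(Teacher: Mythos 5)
Your argument is correct and is essentially the same as the paper's: both substitute the bound from Proposition~\ref{prop:r2a2}, observe that the resulting coefficient of $|dr|^2$ is negative for $3\leq n\leq 5$, and then use $|dr|^2\leq 1$. The only cosmetic difference is that you make the one-variable structure explicit (tabulating the $t$-coefficient values $-35/4,-4,-3/4$ and noting equality at $t=1$, which explains the constant $\tfrac{(2-n)(n^2-4n-4)}{8}$), whereas the paper simply asserts the sign of the coefficient and states the resulting bound.
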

\begin{proof}
    By Proposition \ref{prop:r2a2}, we have
    \begin{align*}
        r^2|A|^2 & -\frac{n(n-2)}{2} + \Big(\frac{n(n-2)}{2} - \frac{(n-2)^2}{4}\Big)|dr|^2\\
        & \geq \frac{6(n-1)}{n-2} - \frac{n(n-2)}{2} + \Big(\frac{n(n-2)}{2} - \frac{(n-2)^2}{4} - \frac{2(2n-1)}{n-2}\Big)|dr|^2 - \frac{2}{n-2}\tilde{\lambda}_{\BiRic}.
    \end{align*}
    Note that the coefficient of $|dr|^2$ on the right-hand side is negative for $3 \leq n \leq 5$, so we can use $|dr|^2 \leq 1$ to conclude that the left-hand side is greater than or equal to
    \[ \frac{2}{n-2}\Big(\frac{(2-n)(n^2-4n-4)}{8} - \tilde{\lambda}_{\BiRic}\Big), \]
    as desired.
\end{proof}

Theorem \ref{thm:spectralbiRic} follows by plugging $n=4$ into Proposition \ref{prop:confconclusion} and applying the stability inequality as formulated in Proposition \ref{prop:confstability}.

\section{$\mu$-Bubbles in Spectral Positive Bi-Ricci Curvature}\label{sec:mu}
We generalize the $\mu$-bubble construction in uniformly positive bi-Ricci curvature of \cite{xu:biRic} to manifolds with spectral uniformly positive bi-Ricci curvature. Our main tool is the notion of \emph{warped} $\mu$-bubbles (see \cite{CL:soapbubbles}), extending the standard $\mu$-bubbles used in \cite{xu:biRic}. Roughly speaking, a $\mu$-bubble is the boundary of a finite perimeter set minimizing a prescribed mean curvature-type functional (meaning a functional of the form ``area of the boundary'' plus ``interior integral of a potential function''). These minimizing hypersurfaces behave similarly to stable minimal hypersurfaces, as long as the potential function is judicially chosen.

Suppose that $(N^n, \g)$ is a smooth complete noncompact Riemannian manifold that admits a smooth function $V$ so that
\[ V \geq 1 - \lambda_{\BiRic}(\g) \]
and
\[ \int_N |\nabla \psi|^2 \geq \int_N V\psi^2 \]
for all $\psi \in C^{\infty}_c(N)$. The reader should note that this condition holds for the conformal metric on the stable minimal hypersurface by Theorem \ref{thm:spectralbiRic}. However, the subsequent estimates hold for any such manifold. Recall from \cite[Theorem 1]{FCS:stable} that there is a positive function $u$ on $N$ satisfying
\begin{equation}\label{eqn:u}
    -\Delta^Nu = Vu \geq (1-\lambda_{\BiRic}(\g))u.
\end{equation}

We prove the following theorem about such $N$ in dimension $n=4$.

\begin{theorem}\label{thm:mububbles}
    Let $X \subset N^4$ be a closed subset with smooth boundary $\partial X = \partial_+X \sqcup \partial_-X$ for some nonempty smooth hypersurfaces $\partial_{\pm}X$. Suppose $d_N(\partial_+X, \partial_-X) \geq 10\pi$. Then there is a connected, relatively open subset $\Omega \subset X$ with smooth boundary $\partial \Omega = \partial_-X \sqcup \Sigma$ so that
    \begin{itemize}
        \item $\partial_-X \subset \Omega$,
        \item $\Sigma \subset X\setminus \partial X$ is a closed submanifold
        \item $\Omega \subset B_{10\pi}(\partial_-X)$, and
        \item there is a smooth function $W \in C^{\infty}(\Sigma)$ so that
        \[ W \geq \frac{3}{4}\left(\frac{1}{2} - \lambda_{\Ric}(\Sigma)\right) \]
        and
        \[ \int_{\Sigma} |\nabla \psi|^2 \geq \int_{\Sigma} W\psi^2 \]
        for all $\psi \in C^{\infty}(\Sigma)$.
    \end{itemize}
\end{theorem}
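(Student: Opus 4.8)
The plan is to construct $\Omega$ as a warped $\mu$-bubble inside $X$, following Gromov's prescribed-mean-curvature soap-bubble technique as adapted in \cite{CL:soapbubbles, xu:biRic}, using the positive function $u$ from \eqref{eqn:u} as the warping weight. First I would fix a smooth function $\rho: X \to [-10\pi, 10\pi]$ that is close to a signed multiple of the distance to $\partial_-X$, arranged so that $\rho \equiv -10\pi$ near $\partial_- X$ and $\rho \equiv 10\pi$ near $\partial_+ X$, with $|\nabla \rho| \leq 1 + \eps$; this is possible precisely because $d_N(\partial_+X, \partial_-X) \geq 20\pi$. Using $\rho$ I would set a prescribing function of the form $h = -c\,\tan(\rho/2)$ (or $\cot$, depending on the normalization chosen so that the ODE $h' \geq \tfrac12(1 + h^2)$-type inequality holds with the correct constant), and then minimize the $u$-weighted functional
\[
\mathcal{A}(\Omega) = \int_{\partial^* \Omega \setminus \partial_- X} u \, d\cH^{n-1} - \int_{X} (\chi_\Omega - \chi_{\Omega_0}) \, h \, u \, d\cH^n
\]
over Caccioppoli sets $\Omega$ with $\partial_- X \subset \Omega \subset X$ and $\Omega$ agreeing with a fixed reference set $\Omega_0$ near $\partial X$. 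The barrier behavior of $h$ near $\rho = \pm 10\pi$ (where $h \to \pm\infty$) forces the minimizer's free boundary $\Sigma$ to stay in the region $\{|\rho| < 10\pi\} \subset B_{20\pi}(\partial_- X)$, giving the first two bullet points; standard geometric measure theory in dimension $n = 4$ ($\leq 7$) gives smoothness of $\Sigma$, and I would pass to the connected component of $\Omega$ containing $\partial_- X$ (which is connected by hypothesis) to get connectedness.

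Next I would derive the stability inequality for $\Sigma$. The first variation of $\mathcal{A}$ shows $\Sigma$ has $u$-weighted mean curvature equal to $h$, i.e. $H_\Sigma + u^{-1}\partial_\nu u = h$ along $\Sigma$. The second variation, combined with $-\Delta_N u = Vu$ and the traced Gauss equation relating the ambient and intrinsic curvatures of $\Sigma \subset N$, yields — after the usual manipulation of multiplying the test function by $\sqrt{u}$ and integrating by parts (this is the step that converts the weight into a genuine Schrödinger operator on $\Sigma$) — an inequality of the schematic form
\[
\int_\Sigma |\nabla \varphi|^2 + \big(\text{something involving } V,\ h,\ |\nabla \rho|,\ |A_\Sigma|^2,\ \text{ambient } \mathrm{Ric}\big)\varphi^2 \geq 0 .
\]
Here is where the bi-Ricci hypothesis enters: choosing the test function and the curvature terms carefully, one rewrites the ambient curvature contribution using $\lambda_{\BiRic}(\g) \geq 1 - V$ and absorbs the terms $|A_\Sigma|^2$, the cross terms from $\nabla \rho$, and the $h, h'$ terms from the prescribing function. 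Xu's computation in \cite{xu:biRic} does exactly this for genuinely positive bi-Ricci curvature; I would run the same linear-algebra bookkeeping but keep $V$ spectral rather than pointwise, which only changes the scalar quantities in the final inequality and not the structure. The upshot should be the claimed
\[
\int_\Sigma |\nabla \psi|^2 \geq \int_\Sigma \mathbf{V}\psi^2, \qquad \mathbf{V} \geq \tfrac34\big(\tfrac12 - \lambda_{\Ric}(\Sigma)\big),
\]
where the function $\mathbf{V}$ is built from $u|_\Sigma$, $h$, and $|\nabla\rho|$; the factor $\tfrac34$ and the shift $\tfrac12$ are exactly what fall out of optimizing the constants in the warped second-variation computation in dimension $4$ (so $\dim\Sigma = 3$), and I would choose the constant $c$ in $h$ and the bound on $|\nabla\rho|$ to make these precise.

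The main obstacle I anticipate is the curvature bookkeeping in the second-variation step: one must simultaneously (i) handle the warping weight $u$ correctly — it contributes a $u^{-1}\Delta_N u = -V$ term that is what lets the spectral (rather than pointwise) hypothesis suffice, but it also produces cross terms $u^{-1}\nabla u$ that must be disposed of by the $\sqrt{u}$-substitution; (ii) use the bi-Ricci lower bound, which requires decomposing the ambient sectional curvatures appearing in the Gauss equation into the right sum of two Ricci-type traces and a sectional curvature, and this decomposition is dimension-sensitive (it is why the statement restricts to $n=4$); and (iii) track all the $|\nabla \rho|$-error terms and the $h' - \tfrac12(1+h^2)$-type slack, ensuring the prescribing ODE for $h$ is chosen so that everything closes with the stated constants and the distance threshold $20\pi$ rather than some larger number. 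Getting the numerology ($\tfrac34$, $\tfrac12$, $20\pi$) exactly right, rather than just "some positive constant," is the delicate part; conceptually it is a direct adaptation of \cite{xu:biRic} and \cite{CL:soapbubbles}.
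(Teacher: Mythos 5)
Your proposal is correct and follows essentially the same route as the paper: warp the prescribed-mean-curvature functional by the first eigenfunction $u$ from \eqref{eqn:u}, choose $h$ as a rescaled tangent of a smoothed distance so that a Riccati-type inequality ($2|\nabla^N h|\leq 1+\tfrac15 h^2$ in the paper) holds with barrier blow-up at the ends of the band, minimize to get a smooth $\Sigma$ in dimension $3$, then in the second variation substitute $\phi = u^{-1/2}\psi$ and use Young's inequality together with the Gauss equation to trade $|A_\Sigma|^2 + \Ric_{\g}(\nu,\nu)$ for $\lambda_{\BiRic}(\g)-\lambda_{\Ric}(\gamma)$ plus a favorable $H^2$ term, at which point $-u^{-1}\Delta_N u = V \geq 1 - \lambda_{\BiRic}$ closes the estimate and produces the constants $\tfrac34$ and $\tfrac12$. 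The only things you leave schematic — the exact coefficient in Young's inequality (the paper takes $\eps = 3/2$ to get the factor $4/3$, hence $3/4$), the exact choice $\eps = (n-2)/2$ in the $A_{11}\sum A_{ii}$ estimate producing $\tfrac{6-n}{4}H^2$, and the precise normalization of $h$ — are bookkeeping details that the paper carries out explicitly, but your outline correctly anticipates both where they arise and why the argument is confined to $n=4$.
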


\begin{remark}
    The hypersurface $\Sigma$ in Theorem \ref{thm:mububbles} is referred to as a (\emph{warped}) \emph{$\mu$-bubble}.
\end{remark}

The strategy to prove Theorem \ref{thm:mububbles} is to minimize a certain warped prescribed mean curvature functional.  

Let $w$ be a smooth positive function on $N$. Let $\Omega \subset N$ be an open set with smooth boundary (or more generally a set of finite perimeter). Let $\nu$ denote the outward unit normal vector field along $\partial \Omega$. Let $h$ be a smooth function defined in a neighborhood of $\partial \Omega$. We study minimizers of the warped prescribed mean curvature functional
\[ \cA(\Omega) = \int_{\partial \Omega} w d\cH^{n-1} - \int_{\Omega} hw d\cH^n. \]
Ultimately, we will take $w = u$ to be our warping function, but we leave $w$ general for most of the calculations. Note that this functional may be viewed as a $\mu$-bubble functional on a warped manifold, see \cite[Remark 11]{CL:soapbubbles} and \cite{xu:biRic} for similar computations.

\subsection{First variation formula}
The first variation formula for $\cA$ can be computed as follows (see for instance \cite[Lemma 13]{CL:soapbubbles}).
\begin{proposition}\label{prop:firstvar}
    Let $\{\Omega_t\}_{|t|<\eps}$ be a smooth family of open sets with smooth boundary, where $\Omega_0 = \Omega$ and the variation vector field is $V_t$. Then
    \[ \frac{d}{dt} \cA(\Omega_t) = \int_{\partial \Omega_t} \langle \nabla^Nw,V_t^\perp\rangle + wH_t\langle \nu_t, V_t\rangle - wh\langle \nu_t, V_t\rangle d\cH^{n-1}, \]
    where $\nu_t$ denotes the outward unit normal vector field along $\partial \Omega_t$ and $H_t$ denotes the scalar mean curvature of $\partial \Omega_t$ with respect to $\nu_t$. Hence, critical points for $\cA$ satisfy
    \[ H = h - w^{-1}\langle \nabla^Nw, \nu\rangle. \]
\end{proposition}

\subsection{Second variation formula}
We now prove the following second variation formula (see for instance \cite[Lemma 14]{CL:soapbubbles}). Although the theorem statement below only holds in dimension $n = 4$, we carry out the computations for general $n$ until plugging in $n=4$ at the very end of the proof.

\begin{theorem}\label{thm:secondvar}
    Let $\Omega \subset N^4$ be an open set with smooth boundary that is a stable critical point for $\cA$ with weight function $w=u$. Let $\Sigma = \partial \Omega$. Let $\gamma$ denote the pullback metric on $\Sigma$. Then there is a smooth function $W \in C^{\infty}(\Sigma)$ so that
    \[ W \geq \frac{3}{4}\left(\frac{1}{2}-\lambda_{\Ric}(\gamma)\right) \]
    and
    \[ \int_{\Sigma} |\nabla^{\Sigma}\psi|^2 \geq \int_{\Sigma} W\psi^2 + \frac{3}{8} \int_{\Sigma} \left(1 + h^2 - 2|\nabla^Nh|\right)\psi^2 \]
    for all $\psi \in C^{\infty}_c(\Sigma)$.
\end{theorem}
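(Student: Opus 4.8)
\emph{Step 1: the weighted stability inequality.} The plan is to run the standard second--variation computation for $\cA$ (as in \cite[\S3]{CL:soapbubbles}, generalising the arguments of \cite{xu:biRic}), carried out for general $n$, and only combine it with the spectral bi-Ricci hypothesis at the end. First I would compute the second variation of $\cA$ at the $\cA$-stable $\Omega$ along a normal variation $\psi\nu$. Using Proposition \ref{prop:firstvar}, i.e. the Euler--Lagrange equation $H = h - w^{-1}\langle\nabla^N w,\nu\rangle$, together with $w = u$, the equation $-\Delta_N u = Vu$ from \eqref{eqn:u}, and the decomposition $\Delta_N u = \Delta_\Sigma u + \Hess^N u(\nu,\nu) + H\,\partial_\nu u$, the $H$--dependent terms cancel and $\cA$-stability becomes a weighted Schr\"odinger inequality on $\Sigma$ of the shape
\[ \int_\Sigma w\,|\nabla^\Sigma\psi|^2 \;\geq\; \int_\Sigma\Big(w\big(|A_\Sigma|^2 + \Ric_N(\nu,\nu)\big) + Vw + w\,\partial_\nu h + h\,\partial_\nu w + \Delta_\Sigma w\Big)\psi^2 \]
for all $\psi\in C^{0,1}_c(\Sigma)$.

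\emph{Step 2: replacing $\Ric_N(\nu,\nu)$ by the intrinsic Ricci of $\Sigma$.} Since $\lambda_{\BiRic}(\g)$ is an infimum, $V \geq 1 - \lambda_{\BiRic}(\g) \geq 1 - \BiRic_N(\nu,e_1)$ for \emph{every} unit $e_1\in T_p\Sigma$, so the potential picks up $w(1 - \BiRic_N(\nu,e_1))$. I would then feed in the identity, valid for any adapted orthonormal frame $e_1,\dots,e_{n-1}$ of $T_p\Sigma$ with normal $\nu$ (from the symmetries of $R_N$ together with the Gauss equation),
\[ \BiRic_N(\nu,e_1) = \Ric_N(\nu,\nu) + \Ric_\Sigma(e_1,e_1) - A_{11}\textstyle\sum_{i\geq 2}A_{ii} + \sum_{i\geq 2}A_{1i}^2. \]
After substituting, the ambient $\Ric_N(\nu,\nu)$ cancels against the stability term, and choosing $e_1$ to realise $\lambda_{\Ric}(\Sigma) = \Ric_\Sigma(e_1,e_1)$ leaves a potential of the form $w\,\mathcal{Q}_A + w(1-\lambda_{\Ric}(\Sigma)) + w\,\partial_\nu h + h\,\partial_\nu w + \Delta_\Sigma w$, where $\mathcal{Q}_A := |A_\Sigma|^2 + A_{11}\sum_{i\geq 2}A_{ii} - \sum_{i\geq 2}A_{1i}^2$ is a quadratic form in the components of $A_\Sigma$.

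\emph{Step 3: the second fundamental form terms and the role of $n=4$.} Next I would show, via a completion--of--squares/rearrangement as in the proof of Proposition \ref{prop:r2a2} (an appropriate $\sigma$--splitting of the trace), that $\mathcal{Q}_A$ is nonnegative --- in fact controls a positive multiple of $|A_\Sigma|^2$ --- when $\dim\Sigma = 3$; this, and the sign of the coefficients of the remaining gradient--of--$r$ / gradient--of--$w$ terms, is precisely where one plugs in $n=4$, in the same spirit as the coefficient check for $3\leq n\leq 5$ in Proposition \ref{prop:confconclusion}. The surplus in $\mathcal{Q}_A$ beyond what is needed is harmless: $\mathbf{V}$ is only required to dominate $\tfrac34(\tfrac12 - \lambda_{\Ric}(\Sigma))$, so any leftover nonnegative curvature/$A$ terms get absorbed into $\mathbf{V}$.

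\emph{Step 4: clearing the weight and organising the $h$--terms; the main obstacle.} Finally I would remove the warping weight by the Fischer--Colbrie--Schoen--type substitution $\psi\mapsto w^{-1/2}\psi$, which converts $\int_\Sigma w|\nabla^\Sigma\psi|^2$ into $\int_\Sigma|\nabla^\Sigma\psi|^2$ plus correction terms in $\nabla^\Sigma\log w$ and $\Delta_\Sigma\log w$; these, together with $\Delta_\Sigma w$ and the cross term $h\,\partial_\nu w$, are estimated by Young's inequality, the genuinely good terms (in $|\nabla^N\log w|$, and part of $\mathcal{Q}_A$) again being discarded into $\mathbf{V}$. After plugging in $n=4$, the curvature contribution should collect into $\tfrac34(\tfrac12 - \lambda_{\Ric}(\Sigma))$ and the $h$--dependent leftover, with the Young constant optimised, into $\tfrac38(1 + \tfrac15 h^2 - 2|\nabla^N h|)$, giving the stated inequality with $\mathbf{V}$ equal to $\tfrac34(\tfrac12-\lambda_{\Ric}(\Sigma))$ plus the nonnegative discarded terms. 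The hard part is exactly this bookkeeping: the substitutions and Young splittings must be chosen so that \emph{simultaneously} the coefficient of $\lambda_{\Ric}(\Sigma)$ comes out exactly $\tfrac34$ with no further loss, the residual $A_\Sigma$--quadratic stays nonnegative, and the $h$--terms assemble with coefficient exactly $\tfrac38$ in the precise combination $1 + \tfrac15 h^2 - 2|\nabla^N h|$ --- and it is the demand that all of these hold at once that pins the argument to $n=4$.
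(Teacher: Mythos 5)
Your overall route is the one the paper takes: compute the second variation of $\cA$ along a normal variation, rewrite $|A_\Sigma|^2 + \Ric_N(\nu,\nu)$ via the Gauss equation in bi-Ricci form so that $\lambda_{\BiRic}(\g)$ is traded for $\lambda_{\Ric}(\gamma)$ plus an $A_\Sigma$-quadratic, and finally remove the warping weight with a Fischer--Colbrie--Schoen substitution $\phi = w^{-1/2}\psi$ plus an optimised Young's inequality (the choice $\eps = 3/2$ there is exactly what produces $\tfrac{4}{3}$, hence the prefactor $\tfrac{3}{4}$). The bi-Ricci/Gauss identity you write in Step 2 is correct. But Step 3 misattributes where $n=4$ enters, and the claim in Step 1 that ``the $H$-dependent terms cancel'' discards the very mechanism that produces the dimensional threshold.

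Concretely: your $\mathcal{Q}_A = |A_\Sigma|^2 + A_{11}\sum_{i\geq 2}A_{ii} - \sum_{i\geq 2}A_{1i}^2$ is nonnegative for all $\dim\Sigma \leq 5$ (equivalently $n\leq 6$), not just $\dim\Sigma = 3$: the completion-of-squares gives $\mathcal{Q}_A \geq \tfrac{6-n}{4}H^2 \geq 0$. So nonnegativity of $\mathcal{Q}_A$ alone cannot pin $n=4$, and there are no gradient-of-$r$ terms in this proof to check signs on (those belong to the conformal-change computation in Section \ref{sec:GL}). What actually happens is that the $H$-terms do \emph{not} cancel. The Euler--Lagrange equation $H = h - w^{-1}\langle\nabla^N w,\nu\rangle$ couples $H$ and $h$, and in order to manufacture the nonnegative $\mu$-bubble potential $1 + \tfrac{1}{5}h^2 + 2\langle\nabla^N h,\nu\rangle$ one adds and subtracts $\tfrac{1}{2}h^2 w\phi^2$ and uses $-\tfrac{1}{2}wh^2 = -\tfrac{1}{2}wH^2 + \tfrac{1}{2}w^{-1}\langle\nabla^N w,\nu\rangle^2 - h\langle\nabla^N w,\nu\rangle$; this leaves a genuine $-\tfrac{1}{2}H^2$ inside the curvature integrand. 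That deficit must be dominated by the $\tfrac{6-n}{4}H^2$ coming out of $\mathcal{Q}_A$, which forces $\tfrac{6-n}{4}\geq\tfrac{1}{2}$, i.e.\ $n\leq 4$, with $n=4$ being the borderline (the $H^2$ terms cancel exactly). Relatedly, your Step 1 sketch of the weighted Schr\"odinger inequality omits the term $-2\phi^2 w^{-1}\langle\nabla^N w,\nu\rangle^2$ that the second variation produces; together with the $\tfrac{1}{2}w^{-1}\langle\nabla^N w,\nu\rangle^2$ just mentioned, it supplies the $\tfrac{5}{2}\langle\nabla^N w,\nu\rangle^2$ needed to absorb the cross term $2h\langle\nabla^N w,\nu\rangle$ by Young, which is how the $h^2$-coefficient drops from $1$ to $\tfrac{1}{5}$. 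Without these pieces the argument does not close, and in particular it does not single out $n=4$.
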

\begin{proof}
    We take a variation $\{\Omega_t\}$ where $\Omega_0 = \Omega$ is a critical point for $\cA$. We can choose our variation so that $D_t V_t = 0$ by taking the normal exponential flow of $V_0 = \phi\nu$.

    We compute using Proposition \ref{prop:firstvar}
    \begin{align*}
        \frac{d^2}{dt^2}\Big|_{t=0}\cA(\Omega_t)
        & = \int_{\Sigma} \phi^2\Hess^Nw(\nu,\nu) - \phi\langle \nabla^{\Sigma}w, \nabla^{\Sigma}\phi\rangle + \phi^2\langle \nabla^N w, \nu\rangle H\\
        & \hspace{0.5cm} -\int_{\Sigma} w(\phi \Delta^{\Sigma}\phi + (|A_{\Sigma}|^2 + \Ric_{\g}(\nu,\nu))\phi^2)\\
        & \hspace{0.5cm} - \int_{\Sigma} \phi^2\langle \nabla^Nw, \nu\rangle h + \phi^2w\langle \nabla^Nh, \nu\rangle.
    \end{align*}
    We use integration by parts on the $-w\phi\Delta^{\Sigma}\phi$ term and the formula
    \[ \Hess^N f(\nu, \nu) = \Delta^Nf - \Delta^{\Sigma}f - \langle \nabla^Nf, \nu\rangle H. \]
    This gives
    \begin{align*}
        \frac{d^2}{dt^2}\Big|_{t=0}\cA(\Omega_t)
        & = \int_{\Sigma} \phi^2(\Delta^Nw - \Delta^{\Sigma}w)\\
        & \hspace{0.5cm} +\int_{\Sigma} w(|\nabla^{\Sigma} \phi|^2 - (|A_{\Sigma}|^2 + \Ric_{\g}(\nu,\nu))\phi^2)\\
        & \hspace{0.5cm} - \int_{\Sigma} \phi^2\langle \nabla^Nw, \nu\rangle h -  \int_{\Sigma} \phi^2w\langle \nabla^Nh, \nu\rangle.
    \end{align*}
    Since $\Omega$ is a stable critical point of $\cA$, we thus have
    \begin{align}\label{eqn:middle}
        \int_{\Sigma} w|\nabla^{\Sigma}\phi|^2 - \phi^2\Delta^{\Sigma}w & \geq \int_{\Sigma} (-\Delta^Nw + (|A_{\Sigma}|^2 + \Ric_{\g}(\nu, \nu))w)\phi^2\\
        & \hspace{0.5cm} + \int_{\Sigma} ( h\langle \nabla^Nw,\nu\rangle+ w\langle \nabla^Nh,\nu\rangle)\phi^2.\notag
    \end{align}
    Take $\phi = w^{-1/2}\psi$. We compute
    \[ \nabla^{\Sigma} \phi = w^{-1/2}\nabla^{\Sigma} \psi - \frac{1}{2}w^{-3/2}\psi \nabla^{\Sigma} w. \]
    Write
    \[
    w|\nabla^{\Sigma} \phi|^2 = a+ b + c 
    \]
    where
    \[ a:= |\nabla^{\Sigma} \psi|^2, \quad b : = - w^{-1}\psi\langle \nabla^{\Sigma} w, \nabla^{\Sigma} \psi\rangle, \quad \textrm{and} \quad c:= \frac{1}{4}w^{-2}\psi^2|\nabla^{\Sigma} w|^2. \]
    We have
    \begin{align*}
        \int_{\Sigma} a & = \int_{\Sigma} |\nabla^{\Sigma} \psi|^2,\\
        \int_{\Sigma} c & = \frac{1}{4} \int_{\Sigma}|\nabla^{\Sigma}\log w|^2\psi^2, 
    \end{align*}  
    and
    \begin{align*}
        \int_{\Sigma} b - \phi^2\Delta^{\Sigma}w
        & = \int_{\Sigma} w^{-1}\psi\langle \nabla^{\Sigma} w, \nabla^{\Sigma} \psi\rangle - \int_{\Sigma} \psi^2|\nabla^{\Sigma}\log w|^2\\
        & \leq \left(\frac{\eps}{2}-1\right)\int_{\Sigma} \psi^2|\nabla^{\Sigma}\log w|^2 + \frac{1}{2\eps}\int_{\Sigma} |\nabla^{\Sigma}\psi|^2
    \end{align*}
    for all $\eps>0$. 
    Taking $\eps = 3/2$, we have
    \[ \int_{\Sigma} w|\nabla^{\Sigma}\phi|^2 - \phi^2\Delta^{\Sigma}w \leq \frac{4}{3}\int_{\Sigma} |\nabla^{\Sigma}\psi|^2. \]
    Combined with \eqref{eqn:middle} we have
    \begin{align}\label{eqn:middle2}
        \frac 4 3 \int_{\Sigma} |\nabla^{\Sigma}\psi|^2
        & \geq \int_{\Sigma}\left(-\frac{\Delta^Nw}{w} + |A_{\Sigma}|^2 + \Ric_{\g}(\nu, \nu)-\frac{1}{2}H^2 - \frac{1}{2}\right)\psi^2\\
        & \hspace{0.5cm} + \int_{\Sigma} \left(\frac{1}{2} + \frac{1}{2}H^2 + h\langle \nabla^N \log w, \nu\rangle + \langle \nabla^Nh, \nu\rangle\right)\psi^2.\notag
    \end{align}
    Since $\Sigma$ is a critical point for $\cA$, we have
    \[ H^2 = h^2 + \langle \nabla^N \log w, \nu\rangle^2 - 2h\langle \nabla^N\log w, \nu\rangle \geq h^2 - 2h\langle \nabla^N\log w, \nu\rangle. \]
    Hence, we have 
    \begin{align}\label{eqn:middle3}
        \frac 4 3\int_{\Sigma} |\nabla^{\Sigma}\psi|^2
        & \geq \int_{\Sigma} \left(-\frac{\Delta^Nw}{w} + |A_{\Sigma}|^2 + \Ric_{\g}(\nu, \nu)-\frac{1}{2}H^2 - \frac{1}{2}\right)\psi^2\\
        & \hspace{0.5cm} + \frac 12 \int_{\Sigma} \left(1 + h^2 - 2|\nabla^Nh|\right)\psi^2.\notag
    \end{align}

    It remains to find a good lower bound for the first integrand on the right-hand side of \eqref{eqn:middle3}. Using the Gauss equation, we compute
    \begin{align*}
        \Ric_\gamma(e_1, e_1)
        & = \sum_{i=1}^{n-1} \Rs_\gamma(e_1, e_i, e_i, e_1)\\
        & = \sum_{i=1}^{n-1} (\Rs_{\g}(e_1, e_i, e_i, e_1) + A_{11}A_{ii} - A_{1i}^2)\\
        & = \BiRic_{\g}(e_1, \nu) - \Ric_{\g}(\nu, \nu) + A_{11}\sum_{i=2}^{n-1} A_{ii} - \sum_{i=2}^{n-1}A_{1i}^2.
    \end{align*}
    Moreover, using $\Tr(A) = H$, we have (for all $\eta>0$)
    \begin{align*}
        A_{11}\sum_{i=2}^{n-1} A_{ii}
        = -A_{11}^2 + A_{11}H
        & = -A_{11}^2 - H\sum_{i=2}^{n-1}A_{ii} + H^2\\
        & \geq -A_{11}^2 - \frac{1}{2\eta}\left(\sum_{i=2}^{n-1}A_{ii}\right)^2 + \left(1 - \frac{\eta}{2}\right)H^2\\
        & \geq -A_{11}^2 - \frac{n-2}{2\eta}\sum_{i=2}^{n-1}A_{ii}^2 + \left(1 - \frac{\eta}{2}\right)H^2\\
        & = -\sum_{i=1}^{n-1}A_{ii}^2 + \frac{6-n}{4}H^2,
    \end{align*}
    where we took $\eta = \frac{n-2}{2}$ in the last line.
    Choosing $e_1$ so that $\lambda_{\Ric}(\gamma) = \Ric_{\gamma}(e_1, e_1)$, we have
    \[ |A_\Sigma|^2 + \Ric_{\g}(\nu,\nu) \geq \lambda_{\BiRic}(\g) - \lambda_{\Ric}(\gamma) + \frac{6-n}{4}H^2. \]
    Taking $n = 4$ and $w = u$, we have
    \[ -\frac{\Delta^Nw}{w} + |A_{\Sigma}|^2 + \Ric_{\g}(\nu, \nu)-\frac{1}{2}H^2 - \frac{1}{2} \geq \frac{1}{2} - \lambda_{\Ric}(\gamma). \]
    Taking $W$ to be $\frac{3}{4}$ times the left-hand side above (which is smooth) completes the proof.
\end{proof}

\begin{proof}[Proof of Theorem \ref{thm:mububbles}]
Equipped with the second variation formula from Theorem \ref{thm:secondvar}, the proof now follows by taking $h$ to be the standard $\mu$-bubble prescribing function, chosen precisely so that $1 + h^2 - 2|\nabla^Nh| \geq 0$,  and then minimizing the functional $\cA$ (see \cite[Lemma 24]{CL:aniso} for the choice of $h$ and see \cite[Proposition 12]{CL:soapbubbles} or \cite[Proposition 2.1]{zhu:mububbles} for the existence theory).

For completeness, we construct the function $h$. Let $\varphi_0$ be a smoothing of the function $d(\partial_-X,\,\cdot\,)$ so that $|\nabla^N\varphi_0|\leq 2$ and $\left.\varphi_0\right|_{\partial_-X} \equiv 0$. Let $\eps \in (0, 1/2)$ so that $\eps$ and $4\pi + 2\eps$ are regular values of $\varphi_0$. Define
\[ \varphi = \frac{\varphi_0 - \eps}{4 + \frac{\eps}{\pi}} - \frac{\pi}{2}. \]
Then $|\nabla^N \varphi| \leq \frac{1}{2}$, and the set $\Omega_1 = \{-\pi/2 < \varphi < \pi/2\}$ has smooth boundary and satisfies $\Omega_1 \subset B_{10\pi}(\partial_-X)$. On $\Omega_1$, we define
\[ h = -\tan(\varphi). \]
Since
\[ \nabla^N h = -(1+\tan^2(\varphi))\nabla^N \varphi = -(1 + h^2)\nabla^N \varphi, \]
it holds that
\[ 2|\nabla^N h| \leq 1 + h^2. \]
Hence, $h$ has the desired property; the rest of the proof follows as in \cite[Lemma 24]{CL:aniso}.
\end{proof}

\section{Geometric Estimates for $\mu$-Bubbles}\label{sec:mu-geo}
We prove that the $\mu$-bubbles constructed in the previous section have uniformly bounded diameter and volume.

\begin{theorem}\label{thm:diameter_and_vol}
    Suppose that $(\Sigma^3, \gamma)$ is a connected, closed Riemannian 3-manifold that admits a smooth function $W$ and a constant $\alpha \in (0, 2]$ so that
    \[ W \geq \alpha^{-1}(2 - \lambda_{\Ric}(\gamma)) \]
    and
    \[ \int_{\Sigma} |\nabla \psi|^2 \geq \int_{\Sigma} W\psi^2 \]
    for all $\psi \in C^{\infty}(\Sigma)$. Then
    \[ \mathrm{diam}(\Sigma, \gamma) \leq \pi  \]
    and
    \[ \mathrm{Vol}(\Sigma, \gamma) \leq 2\pi^2, \]
    where $\mathrm{diam}$ and $\mathrm{Vol}$ denote the diameter and volume, respectively, of $(\Sigma, \gamma)$.
\end{theorem}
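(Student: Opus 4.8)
The strategy is to run a spectral (weighted) version of Bray's proof of the Bishop volume comparison via the isoperimetric profile, following the sketch in the introduction. The input is the spectral condition: rewrite the eigenvalue inequality $\int_\Sigma |\nabla\psi|^2 \ge \int_\Sigma \mathbf{V}\psi^2$ using the logarithmic transform. By \cite{FCS:stable} there is a positive first eigenfunction $f > 0$ on $\Sigma$ with $-\Delta^\Sigma f = \mathbf{V} f \ge \alpha^{-1}(2 - \lambda_{\Ric}(\gamma)) f$. Setting $u = \log f$, this becomes $-\Delta^\Sigma u - |\nabla u|^2 \ge \alpha^{-1}(2 - \lambda_{\Ric}(\gamma))$, i.e. a lower bound on $-\Delta^\Sigma u$ coupled to the Ricci curvature. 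I would use $f$ (or a power of it) as a conformal/warping weight and study the weighted isoperimetric profile
\[
I(t) = \inf\{\mathrm{Per}_f(\Omega) : \Omega \subset \Sigma,\ \mathrm{Vol}_f(\Omega) = t\},
\]
where $\mathrm{Per}_f$, $\mathrm{Vol}_f$ are the $f$-weighted perimeter and volume (with the correct power of $f$ so that the effective dimension matches $\dim\Sigma + $ the spectral contribution — concretely, the $\alpha$ in the hypothesis is engineered so that the model comparison space is the round $S^3$).

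The key steps, in order: (1) Establish existence and regularity of weighted isoperimetric minimizers $\Omega_t$ for each volume $t$ (standard in a closed manifold, with the weight smooth and positive). (2) Derive the first and second variation formulas for the weighted area functional along a minimizer; the first variation shows the reduced boundary $\partial\Omega_t$ has constant weighted mean curvature, and the second variation yields a stability inequality. (3) Plug $\psi \equiv 1$ (or the appropriate constant-related test function) into the stability inequality for $\Omega_t$ and combine with the Gauss equation on $\partial\Omega_t$ together with the spectral condition $-\Delta^\Sigma u - |\nabla u|^2 \ge \alpha^{-1}(2 - \lambda_{\Ric})$, exactly as $\lambda_{\Ric}$ enters when one splits $\Ric_\gamma(\nu,\nu)$ off from the ambient scalar/sectional curvature of $\partial\Omega_t$. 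This produces a differential inequality for $I$: after normalizing, $I$ satisfies the same ODE comparison $(I^{1/(k-1)})'' \le -c\, I^{1/(k-1)}$ (with $k=3$ here and $c$ tuned by $\alpha$) that the round sphere's profile satisfies with equality. (4) A Bonnet--Myers/ODE argument on this inequality forces $\mathrm{Vol}_f(\Sigma) \le \mathrm{Vol}(S^3)$ — and since in the relevant application $f$ can be normalized (the diameter bound already controls oscillation of $f$), also $\mathrm{Vol}(\Sigma,\gamma) \le 2\pi^2$. (5) For the diameter bound, either extract it first — the spectral Ricci lower bound of the form \eqref{eqn:schrodiner3} gives $\mathrm{diam}(\Sigma) \le \pi$ by the Shen--Ye-type spectral Bonnet--Myers theorem \cite{SY:general} — and then use it to control the weight $f$ in step (4), or read it off from the degeneration of the isoperimetric ODE at the endpoints.

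The main obstacle I anticipate is step (2)–(3): the weighted second variation of area contains extra terms (coming from $\nabla^\Sigma \log f$ tangential to $\partial\Omega_t$, and from the interaction of the weight with $\langle\nabla\log f,\nu\rangle$ in the weighted mean curvature) that must be absorbed using Young's inequality at exactly the right split, and the coefficients only close up when $\dim\Sigma \le 3$ — this is precisely the ``bad terms in the second variation of weighted area'' flagged in the introduction. Getting the constant $\alpha \le 2$ to be sharp (so the comparison model is genuinely $S^3$ rather than a smaller sphere) requires tracking these constants carefully; I would carry the computation out for general $\dim\Sigma$ with free Young parameters, then specialize to $\dim\Sigma = 3$ and optimize, mirroring the bookkeeping done in the proof of Theorem \ref{thm:secondvar}. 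The ODE comparison in step (4) is then routine, following Bray \cite{Bray:thesis} (cf.\ \cite{Brendle:iso}).
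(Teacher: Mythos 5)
Your overall plan matches the paper's: use the positive first eigenfunction $\theta$ of $-\Delta^\Sigma - \mathbf{V}$ (so $-\Delta\theta \geq \alpha^{-1}(2 - \lambda_{\Ric})\theta$) as a weight, study the $\theta^\alpha$-weighted isoperimetric profile $\cI$, derive a concavity-type differential inequality in the barrier sense, integrate it against the round model, and get the diameter from the Shen--Ye spectral Bonnet--Myers theorem. But two steps in your plan, as written, would not close. First, in the second variation you should \emph{not} take the test function to be $1$ (or any ``constant-related'' function); the variation field must be $\theta^{-\alpha}\nu$. With that choice $v'(0)=\int_{\partial\Omega}1$ is the \emph{unweighted} area, and after combining the first and second variations one applies H\"older's inequality $\left(\int_{\partial\Omega}1\right)^2 \leq \left(\int_{\partial\Omega}\theta^\alpha\right)\left(\int_{\partial\Omega}\theta^{-\alpha}\right)$ to convert the $\int_{\partial\Omega}\theta^{-\alpha}$ that appears back into $\cI(v_0)^{-1}$. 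With $f\equiv 1$ this cancellation does not happen and the weighted terms do not organize into a clean ODE. The resulting inequality is the nonlinear $(\cI^{3/2})''\leq -3\,\cI^{-1/2}$ (in the viscosity/barrier sense), which the round $S^3$ profile satisfies with equality; the linear form $(\cI^{1/2})''\leq -c\,\cI^{1/2}$ you wrote is not satisfied with equality by the round profile and cannot be the right comparison.

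Second, your idea to normalize $\theta$ and then use the diameter bound to control its oscillation is the wrong direction and would lose the sharp constant (you would need a quantitative Harnack estimate, ending up with $\mathrm{Vol}(\Sigma)\leq C\cdot 2\pi^2$ for some $C>1$). What the paper actually does is normalize $\min\theta = 1$, so that $\mathrm{Vol}(\Sigma,\gamma)\leq \int_\Sigma\theta^\alpha$ is free (no oscillation bound needed), and then the ODE comparison is completed by an asymptotic analysis as $v\to 0$ that you gloss over as ``routine'': the barrier $f_z(v-\beta(z))$ has slope $3\sqrt{z}$ at $v=0$, while the profile of small geodesic balls \emph{centered at a point where $\theta$ attains its minimum} gives $\cF(v)=\cI(v)^{3/2}\leq 6\sqrt{\pi}\,v + o(v)$; it is the comparison $3\sqrt{4\pi+\delta}\leq 6\sqrt{\pi}$ that produces the contradiction if $\int\theta^\alpha>2\pi^2$. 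Choosing the base point at $\arg\min\theta$ is essential — any other center spoils the leading coefficient. (A minor point: $\alpha\leq 2$ is not about choosing the model sphere; it enters both via the Shen--Ye threshold $\alpha\leq 4/(n-1)=2$ for the diameter and via the sign of a $\tfrac{1}{2}\alpha(\alpha-2)\langle\nabla\theta,\nu\rangle^2$ term in the weighted second variation, which must be nonpositive for the inequality to close. The model is always $S^3$.)
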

Note that both inequalities are sharp for the unit $3$-sphere in $\R^4$ (see also Remark \ref{rema:rigid-vol} explaining the rigidity statement for the volume estimate). 
\begin{proof}[Proof of the diameter bound]
    A smooth positive first eigenfunction $\theta$ of the operator $-\Delta - W$ satisfies
    \begin{equation}\label{eqn:eigenfunction}
        -\Delta \theta \geq \alpha^{-1}(2 - \lambda_{\Ric}(\gamma))\theta.
    \end{equation}
    Hence,
    \[ \Ric^{(\theta, \alpha)}_\gamma \equiv \Ric_{\gamma} - \alpha(\theta^{-1}\Delta \theta)\gamma \geq \Ric_\gamma - \lambda_{\Ric}(\gamma)\gamma + 2\gamma \geq 2\gamma, \]
    where we adapt the notation of \cite{SY:general}. Since $\alpha \leq \frac{4}{3-1} = 2$, we have 
    \[ \mathrm{diam}(\Sigma, \gamma) \leq \pi, \]
    by \cite[Corollary 1]{SY:general}. This completes the proof of the diameter bound.
\end{proof}

It remains to prove the volume bound for $\Sigma$. The strategy is to exploit the concavity properties of a weighted isoperimetric profile. These arguments extend the strategy of Bray's proof of the Bishop volume comparison from \cite{Bray:thesis} (we follow the exposition in \cite{Brendle:scal}).

\subsection{Weighted isoperimetric profile}
For an open set $\Omega \subset \Sigma$ with smooth boundary, we define a weighted area and volume functional by
\[ a(\Omega) = \int_{\partial\Omega} \theta^{\alpha}\ d\cH^2_\gamma\ \text{and}\ \ v(\Omega) = \int_{\Omega} \theta^{\alpha}\ d\cH^3_\gamma, \]
where $\theta$ is the unique positive first eigenfunction of $-\Delta - W$ from \eqref{eqn:eigenfunction} with $\min \theta = 1$.

The \emph{weighted isoperimetric profile} is the function $\cI : (0, v(\Sigma)) \to \R$ given by
\[ \cI(v) = \inf\{a(\Omega) \mid v(\Omega) = v\}. \]
By \cite[\S 3.10]{Morgan:reg-iso}, for all $v \in (0,v(\Sigma))$ there is an open set $\Omega\subset \Sigma$ with smooth boundary (not necessarily unique) achieving $\cI(v)$. In fact, $\Omega$ minimizes $a(\cdot)$ among all Caccioppoli sets with weighted volume $v$.

\subsection{First variation}
We compute the first variation of the functionals $a$ and $v$. Let $\{\Omega_t\}_{|t|<\eps}$ be a smooth family of open sets with smooth boundary whose variation vector field along $\partial\Omega = \partial\Omega_0$ is $f\nu$, where $\nu$ is the unit normal field to $\partial\Omega$ pointing out of $\Omega$.
The following computation is standard (see for instance \cite[\S3.2]{CL:soapbubbles}).

\begin{proposition}\label{prop:first_variation_iso}We have
    \[ \frac{d}{dt}\Big|_{t=0} a(\Omega_t) = \int_{\partial\Omega} (H + \alpha \theta^{-1}\langle \nabla^{\Sigma}\theta, \nu\rangle)f\theta^{\alpha}\ \ \text{and}\ \ \frac{d}{dt}\Big|_{t=0} v(\Omega_t) = \int_{\partial \Omega} f\theta^{\alpha}. \]
\end{proposition}

\subsection{Second variation}
We compute the second variation of the functionals $a$ and $v$. We consider the same setup as in the previous subsection. The computations follow similar arguments as in \cite[\S3.2]{CL:soapbubbles}.

\begin{proposition}\label{prop:second_variation_iso}We have
    \begin{align*}
        \frac{d^2}{dt^2}\Big|_{t=0} a(\Omega_t)
        & = \int_{\partial \Omega} |\nabla^{\partial \Omega}f|^2\theta^{\alpha} - (\Ric_{\Sigma}(\nu,\nu) + |A_{\partial \Omega}|^2)f^2\theta^{\alpha} + \alpha (\Delta^{\Sigma}\theta - \Delta^{\partial \Omega}\theta)f^2\theta^{\alpha-1}\\
        & \hspace{0.5cm} + \int_{\partial \Omega} \alpha(\alpha-1)\langle \nabla^{\Sigma}\theta, \nu\rangle^2f^2\theta^{\alpha-2} + H(H+\alpha \theta^{-1}\langle \nabla^{\Sigma} \theta, \nu\rangle)f^2\theta^{\alpha},
    \end{align*}
    and
    \[ \frac{d^2}{dt^2}\Big|_{t=0} v(\Omega_t) = \int_{\partial \Omega} (H + \alpha\theta^{-1}\langle \nabla^{\Sigma}\theta, \nu\rangle)f^2\theta^{\alpha}. \]
\end{proposition}
\begin{proof}
    By Propostion \ref{prop:first_variation_iso}, we compute
    \begin{align*}
        \frac{d^2}{dt^2}\Big|_{t=0} a(\Omega_t)
        & = \int_{\partial \Omega} (-\Delta^{\partial \Omega} f - (\Ric_{\Sigma}(\nu,\nu) + |A_{\partial \Omega}|^2)f)f\theta^{\alpha}\\
        & \hspace{0.5cm} + \int_{\partial \Omega} \alpha \Hess^{\Sigma} \theta(\nu, \nu)f^2\theta^{\alpha-1} - \alpha \langle \nabla^{\partial \Omega}\theta, \nabla^{\partial \Omega}f\rangle f\theta^{\alpha-1}\\
        & \hspace{0.5cm} + \int_{\partial \Omega} \alpha H \langle \nabla^{\Sigma} \theta, \nu\rangle f^2 \theta^{\alpha-1} + \alpha(\alpha-1)\langle \nabla^{\Sigma}\theta, \nu\rangle^2f^2\theta^{\alpha-2}\\
        & \hspace{0.5cm} + \int_{\partial \Omega} H(H+\alpha \theta^{-1}\langle \nabla^{\Sigma} \theta, \nu\rangle)f^2\theta^{\alpha}.
    \end{align*}
    Using the formula
    \[ \Hess^{\Sigma} \phi(\nu, \nu) = \Delta^{\Sigma}\phi - \Delta^{\partial \Omega} \phi - H\langle\nabla^{\Sigma}\phi, \nu\rangle \]
    and applying integration by parts to the $-f\theta^{\alpha}\Delta^{\partial \Omega}f$ term, we deduce
    \begin{align*}
        \frac{d^2}{dt^2}\Big|_{t=0} a(\Omega_t)
        & = \int_{\partial \Omega} |\nabla^{\partial \Omega}f|^2\theta^{\alpha} - (\Ric_{\Sigma}(\nu,\nu) + |A_{\partial \Omega}|^2)f^2\theta^{\alpha} + \alpha (\Delta^{\Sigma}\theta - \Delta^{\partial \Omega}\theta)f^2\theta^{\alpha-1}\\
        & \hspace{0.5cm} + \int_{\partial \Omega} \alpha(\alpha-1)\langle \nabla^{\Sigma}\theta, \nu\rangle^2f^2\theta^{\alpha-2} + H(H+\alpha \theta^{-1}\langle \nabla^{\Sigma} \theta, \nu\rangle)f^2\theta^{\alpha}.
    \end{align*}
    Similarly, by Proposition \ref{prop:first_variation_iso}, we compute
    \begin{align*}
        \frac{d^2}{dt^2}\Big|_{t=0} v(\Omega_t) = \int_{\partial \Omega} (H + \alpha\theta^{-1}\langle \nabla^{\Sigma}\theta, \nu\rangle)f^2\theta^{\alpha}.
    \end{align*}
    This completes the proof.
\end{proof}

\subsection{Differential inequality in the barrier sense}\label{subsec:diff_ineq}
Fix $v_0 \in (0, v(\Sigma))$.

Let $\Omega$ be a weighted isoperimetric set for the problem $\cI(v_0)$. Let $\{\Omega_t\}_{|t|<\eps}$ be a smooth family of open sets with smooth boundary with $\Omega_0 = \Omega$ whose variation vector field at $t=0$ is $\theta^{-\alpha}\nu$, where $\nu$ is the outward pointing unit normal vector field along $\partial \Omega$.

We note that $v(t) := v(\Omega_t)$ is a smooth function. By Proposition \ref{prop:first_variation_iso} with $f = \theta^{-\alpha}$, we have
\[ v'(0) = \frac{d}{dt}\Big|_{t=0} v(\Omega_t) = \int_{\partial \Omega} 1 > 0. \]
By the inverse function theorem, there is some small $\sigma > 0$ and a smooth function
\[ t : (v_0-\sigma, v_0+\sigma) \to \R \]
that is the inverse of $v(t)$.

Let $u : (v_0 - \sigma, v_0 + \sigma) \to \R$ be defined by
\begin{equation}\label{eqn:u_def}
    u(v) := a(t(v)).
\end{equation}
Note that $u(v_0) = a(0) = \cI(v_0)$. Moreover, since $v(\Omega_{t(v)}) = v$, we have $u(v) \geq \cI(v)$ for all $v \in (v_0 - \sigma, v_0 + \sigma)$.

Let primes denote derivatives with respect to $v$ and dots denote derivatives with respect to $t$.

\begin{proposition}\label{prop:diff_ineq1}
    The function $u$ satisfies
    \[ u''(v_0) \leq -\left(2+\frac{1}{2}u'(v_0)^2\right)u(v_0)^{-1}. \]
\end{proposition}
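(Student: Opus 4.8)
The plan is to derive the differential inequality by computing the first and second derivatives of $u(v) = a(t(v))$ at $v_0$ using the chain rule, and then feeding in the stability of the weighted isoperimetric surface $\Omega$ together with the second variation formulas from Proposition~\ref{prop:second_variation_iso}. First I would record the chain-rule identities: since $v(t)$ has $v'(0) = \int_{\partial\Omega} 1 =: |\partial\Omega|_0$ (the unweighted area, because the variation field is $\theta^{-\alpha}\nu$), and $a(t)$ has $\dot a(0) = \int_{\partial\Omega}(H + \alpha\theta^{-1}\langle\nabla^\Sigma\theta,\nu\rangle)$, we get $u'(v_0) = \dot a(0)/\dot v(0)$. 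Because $\Omega$ is a minimizer for $\cI(v_0)$, it is a critical point of weighted area under the weighted-volume constraint, so $H + \alpha\theta^{-1}\langle\nabla^\Sigma\theta,\nu\rangle$ is a constant on $\partial\Omega$ (a Lagrange multiplier), call it $\Lambda$; hence $u'(v_0) = \Lambda$. Then $u''(v_0) = (\ddot a(0) - u'(v_0)\ddot v(0))/\dot v(0)^2 = (\ddot a(0) - \Lambda\,\ddot v(0))/\dot v(0)^2$.

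Next I would observe that $\ddot a(0) - \Lambda \ddot v(0)$ is precisely the second variation of the constrained functional $a - \Lambda v$ at the critical point $\Omega$ with the specific test field $\theta^{-\alpha}\nu$, i.e. $f = \theta^{-\alpha}$ in Proposition~\ref{prop:second_variation_iso}. Since $\Omega$ minimizes $\cI(v_0)$, this constrained second variation is nonnegative for volume-preserving variations, but for the chosen field it need not be volume-preserving; the correct statement (standard in isoperimetric-profile arguments, cf.\ Bray) is that one instead uses stability applied to a mean-zero modification. Concretely, I would use the fact that minimality of $\Omega$ forces $\int_{\partial\Omega}|\nabla^{\partial\Omega}\varphi|^2\theta^\alpha \ge \int_{\partial\Omega}(\Ric_\Sigma(\nu,\nu)+|A_{\partial\Omega}|^2 - \ldots)\varphi^2\theta^\alpha$ for all $\varphi$ with $\int_{\partial\Omega}\varphi\theta^\alpha = 0$, and then combine the explicit test function $f = \theta^{-\alpha}$ (whose weighted-gradient term is controlled) with the comparison; the non-constant part of $f$ is absorbed into the stability inequality and the constant part contributes the $u'(v_0)^2$ term via Cauchy-Schwarz relating $\int 1$, $\int \theta^{-\alpha}$, and the multiplier.

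The crucial input is the spectral Ricci bound \eqref{eqn:eigenfunction}: plugging $f = \theta^{-\alpha}$ into the second variation of $a$, the terms $\alpha(\Delta^\Sigma\theta - \Delta^{\partial\Omega}\theta)f^2\theta^{\alpha-1}$ and $\alpha(\alpha-1)\langle\nabla^\Sigma\theta,\nu\rangle^2 f^2\theta^{\alpha-2}$ and the gradient term $|\nabla^{\partial\Omega}f|^2\theta^\alpha = \alpha^2|\nabla^{\partial\Omega}\log\theta|^2\theta^{-\alpha}$ must be assembled so that the $\alpha\theta^{-1}\Delta^\Sigma\theta$ piece produces, via $-\Delta\theta \ge \alpha^{-1}(2-\lambda_{\Ric})\theta$, the $-(2 - \lambda_{\Ric})$ contribution that cancels against $-\Ric_\Sigma(\nu,\nu) - |A_{\partial\Omega}|^2$ (using $\Ric_\Sigma(\nu,\nu) + |A_{\partial\Omega}|^2 \ge \Ric_\Sigma(\nu,\nu) \ge \lambda_{\Ric}$ together with the trace/Gauss manipulation and the condition $\alpha\le 2$). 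After all cancellations the surviving terms should be $\ge 2\int_{\partial\Omega}\theta^{-\alpha}$ on the ``curvature'' side and $\ge \tfrac12\Lambda^2 \cdot (\text{something})$ on the ``Lagrange'' side, and dividing by $\dot v(0)^2 = (\int_{\partial\Omega}1)^2$ and using $u(v_0) = a(0) = \int_{\partial\Omega}\theta^\alpha$ together with Cauchy-Schwarz $(\int 1)^2 \le (\int\theta^\alpha)(\int\theta^{-\alpha})$ converts these into $-2 u(v_0)^{-1}$ and $-\tfrac12 u'(v_0)^2 u(v_0)^{-1}$ respectively.

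I expect the main obstacle to be the bookkeeping in that last assembly: carefully tracking which terms in the second variation of $a$ (Proposition~\ref{prop:second_variation_iso}) combine with the eigenfunction inequality \eqref{eqn:eigenfunction}, ensuring the sign of every leftover term (in particular the $\alpha(\alpha-1)\langle\nabla^\Sigma\theta,\nu\rangle^2$ term and the $\Delta^{\partial\Omega}\theta$ term, which want to be handled by integration by parts on $\partial\Omega$ against $f^2\theta^{\alpha-1} = \theta^{-\alpha-1}$), and verifying that the condition $\alpha \le 2$ is exactly what makes the quadratic-in-$A$ terms have the right sign. This is precisely the place where, as the introduction notes, the argument is delicate and is expected to fail for $\dim\Sigma > 3$; I would budget most of the effort there and treat the chain-rule and Cauchy-Schwarz steps as routine.
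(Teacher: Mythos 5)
Your chain-rule setup, the identification of the Lagrange multiplier $\Lambda = H+\alpha\theta^{-1}\langle\nabla^\Sigma\theta,\nu\rangle$, the choice $f=\theta^{-\alpha}$, the role of the spectral Ricci bound \eqref{eqn:eigenfunction}, the $\alpha\le2$ condition, and the final Cauchy--Schwarz step are all on target and match the paper. But the middle of your plan has a genuine logical error: you propose to invoke stability of $\Omega$ via a mean-zero modification of $f$, and this cannot work.

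The point is that the paper never uses stability of the isoperimetric surface at all. The barrier property $u(v)\ge\cI(v)$ with equality at $v_0$ is automatic, because each set $\Omega_{t(v)}$ is a competitor in the problem defining $\cI(v)$; it does not require the variation to be volume-preserving, and in fact the variation must change volume (here $\dot v(0)=\int_{\partial\Omega}1>0$) precisely so that we can reparametrize by $v$. The differential inequality is then obtained by computing $u''(v_0)=(\ddot a(0)-\Lambda\ddot v(0))/\dot v(0)^2$ directly from Proposition~\ref{prop:second_variation_iso} with $f=\theta^{-\alpha}$, integrating by parts the $\Delta^{\partial\Omega}\theta$ term against $\theta^{-\alpha-1}$ (which combines with $|\nabla^{\partial\Omega}(\theta^{-\alpha})|^2\theta^\alpha$ to produce the nonpositive term $-\alpha|\nabla^{\partial\Omega}\theta|^2\theta^{-\alpha-2}$), feeding in $\alpha\theta^{-1}\Delta^\Sigma\theta\le-(2-\lambda_{\Ric})$, $\Ric_\Sigma(\nu,\nu)\ge\lambda_{\Ric}$, and the two-dimensional inequality $|A_{\partial\Omega}|^2\ge\tfrac12 H^2$, completing the square on $H$ to produce $-\tfrac12\Lambda^2$ plus the error $\tfrac12\alpha(\alpha-2)\langle\nabla^\Sigma\theta,\nu\rangle^2\theta^{-\alpha-2}\le0$ (this is where $\alpha\le2$ enters), and finally applying Cauchy--Schwarz $(\int 1)^2\le(\int\theta^\alpha)(\int\theta^{-\alpha})$. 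No stability is used.

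Your proposed detour through stability would in fact go in the wrong direction. Stability of $\Omega$ asserts that the quadratic form $Q(\varphi,\varphi)=\ddot a-\Lambda\ddot v$ is \emph{nonnegative} on test functions $\varphi$ with $\int_{\partial\Omega}\varphi\theta^\alpha=0$. If you decompose $\theta^{-\alpha}$ into a constant plus a mean-zero part and apply stability to the mean-zero part, you obtain a \emph{lower} bound on $Q(\theta^{-\alpha},\theta^{-\alpha})$, whereas the proposition needs an \emph{upper} bound $u''(v_0)\le -(2+\tfrac12 u'(v_0)^2)u(v_0)^{-1}$, i.e.\ a negativity statement. There is no way to extract an upper bound from stability here; the negativity must come (and does come) from the curvature hypotheses alone, via the direct computation above.
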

\begin{proof}
    We have
    \[ t'(v) = \frac{1}{\dot{v}(t(v))}\ \text{and}\ t''(v) = -\frac{\ddot{v}(t(v))}{\dot{v}(t(v))^3}. \]
    By Proposition \ref{prop:first_variation_iso} and \ref{prop:second_variation_iso}, we have
    \[ t'(v_0) = \left(\int_{\partial \Omega} 1\right)^{-1}\ \text{and} \quad  t''(v_0) = -\left(\int_{\partial \Omega} 1\right)^{-3}\int_{\partial \Omega}(H+\alpha\theta^{-1}\langle \nabla^{\Sigma}\theta, \nu\rangle)\theta^{-\alpha}. \]

    We note that
    \[ u'(v) = \frac{d}{dv}a(t(v)) = \dot{a}(t(v))t'(v) \]
    and
    \[ u''(v) = \frac{d^2}{dv^2}a(t(v)) = \ddot{a}(t(v))t'(v)^2 + \dot{a}(t(v))t''(v). \]

    We therefore have (by Proposition \ref{prop:first_variation_iso} with $f = \theta^{-\alpha}$)
    \[ u'(v_0) = H+\alpha\theta^{-1}\langle \nabla^{\Sigma}\theta, \nu\rangle, \]
    where we use the fact that isoperimetric surfaces satisfy the equation $H+\alpha\theta^{-1}\langle \nabla^{\Sigma}\theta, \nu\rangle = \lambda$ for some $\lambda \in \R$.

    By \eqref{eqn:eigenfunction} and Propositions \ref{prop:first_variation_iso} and \ref{prop:second_variation_iso} with $f = \theta^{-\alpha}$, we have
    \begin{align*}
        u''(v_0)\left(\int_{\partial \Omega} 1\right)^2
        & = \left(\ddot{a}(0)(t'(v_0))^2 + \dot{a}(0)t''(v_0)\right)\left(\int_{\partial \Omega} 1\right)^2\\
        & = \int_{\partial \Omega} |\nabla^{\partial \Omega}(\theta^{-\alpha})|^2\theta^{\alpha} - (\Ric_{\Sigma}(\nu,\nu) + |A_{\partial \Omega}|^2)\theta^{-\alpha}\\
        & \hspace{0.5cm} + \int_{\partial \Omega} \alpha (\Delta^{\Sigma}\theta - \Delta^{\partial \Omega}\theta)\theta^{-\alpha-1} + \alpha(\alpha-1)\langle \nabla^{\Sigma}\theta, \nu\rangle^2\theta^{-\alpha-2}\\
        & \hspace{0.5cm} + \int_{\partial \Omega} H(H+\alpha \theta^{-1}\langle \nabla^{\Sigma} \theta, \nu\rangle)\theta^{-\alpha} - (H+\alpha\theta^{-1}\langle \nabla^{\Sigma}\theta, \nu\rangle)^2\theta^{-\alpha}\\
        & = \int_{\partial \Omega} (\alpha\theta^{-1}\Delta^{\Sigma} \theta - \Ric_{\Sigma}(\nu,\nu))\theta^{-\alpha} -\alpha |\nabla^{\partial \Omega}\theta|^2\theta^{-\alpha-2} -  |A_{\partial \Omega}|^2\theta^{-\alpha}\\
        & \hspace{0.5cm} + \int_{\partial \Omega} -\alpha H \langle \nabla^{\Sigma}\theta, \nu\rangle \theta^{-\alpha-1} -  \alpha \langle \nabla^{\Sigma} \theta, \nu\rangle^2\theta^{-\alpha-2}\\
        & \leq \int_{\partial \Omega} -2\theta^{-\alpha} - \frac{1}{2}H^2\theta^{-\alpha} - \alpha H \langle \nabla^{\Sigma}\theta, \nu\rangle \theta^{-\alpha-1} - \alpha \langle \nabla^{\Sigma} \theta, \nu\rangle^2\theta^{-\alpha-2}\\
        & = \int_{\partial \Omega}-\left(2 + \frac{1}{2}(H+\alpha\theta^{-1}\langle \nabla^{\Sigma}\theta, \nu\rangle)^2\right) \theta^{-\alpha} + \frac{1}{2}\alpha(\alpha - 2) \langle \nabla^{\Sigma}\theta, \nu\rangle^2\theta^{-\alpha-2}\\
        & \leq -\left(2 + \frac{1}{2}u'(v_0)^2\right)\int_{\partial \Omega} \theta^{-\alpha},
    \end{align*}
    where we used $0 < \alpha \leq 2$. In particular, $u''(v_0)\leq 0$. 
    
    By H{\"o}lder's inequality, we have
    \[ \left(\int_{\partial \Omega} 1\right)^2 \leq \int_{\partial \Omega} \theta^{\alpha}\int_{\partial \Omega} \theta^{-\alpha} = u(v_0)\int_{\partial \Omega}\theta^{-\alpha}. \]
    Hence, $u$ satisfies
    \[ u''(v_0) \leq -\left(2 + \frac{1}{2}u'(v_0)^2\right)u(v_0)^{-1}, \]
    which concludes the proof.
\end{proof}

We consider a power of $\cI$ and $u$ to simplify the corresponding differential inequality. We let $\cF(v) = \cI(v)^{3/2}$. By Proposition \ref{prop:diff_ineq1}, we have the following result.

\begin{proposition}\label{prop:diff_ineq2}
    For any $v_0 \in (0, V)$, there is a smooth function $U : (v_0 - \sigma, v_0 + \sigma) \to \R$ satisfying
    \begin{itemize}
        \item $U(v_0) = \cF(v_0)$,
        \item $U(v) \geq \cF(v)$ for all $v \in (v_0 - \sigma, v_0 + \sigma)$, and
        \item $U''(v_0) \leq -3U(v_0)^{-1/3}$.
    \end{itemize}
\end{proposition}
\begin{proof}
    We take $U(v) = u(v)^{3/2}$ with $u$ defined as in \eqref{eqn:u_def}. By the definition of $u$ and $\cI$, the first two bullet points follows immediately. 
    
    We directly compute
    \[ U'(v) = \frac{3}{2}u^{1/2}(v)u'(v) \]
    and
    \begin{align*}
        U''(v_0)
        & = \frac{3}{4}u^{-1/2}(v_0)u'(v_0)^2 + \frac{3}{2}u(v_0)^{1/2}u''(v_0)\\
        & \leq \frac{3}{4}u^{-1/2}(v_0)u'(v_0)^2 - 3u(v_0)^{-1/2} - \frac{3}{4}u(v_0)^{-1/2}(u'(v_0))^2 = -3U(v_0)^{-1/3},
    \end{align*}
    where the inequality follows from Proposition \ref{prop:diff_ineq1}.
\end{proof}

From the existence of the upper barrier proved above, we can now conclude that $\cI$ is continuous.

\begin{proposition}\label{prop:cts-iso}
    $\cI$ is continuous.
\end{proposition}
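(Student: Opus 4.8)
The plan is to prove continuity of $\cI$ by the standard argument for isoperimetric profiles, adapted to the weighted setting. Since $\theta$ is smooth, bounded below by $1$, and bounded above on the closed manifold $\Sigma$, the weighted measures $\theta^\alpha d\cH^3$ and $\theta^\alpha d\cH^2$ are comparable to the Riemannian measures, so I expect no new difficulty beyond bookkeeping. First I would establish a \emph{local volume-adjustment lemma}: given any $\Omega$ with smooth boundary and $v(\Omega) = v$, and any small $\delta$, one can produce a competitor $\Omega'$ with $v(\Omega') = v + \delta$ (for $|\delta|$ small, with a bound depending on a lower bound for how far $v$ is from $0$ and $V$) and $a(\Omega') \leq a(\Omega) + C|\delta|$. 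This is done by flowing $\partial\Omega$ along a compactly supported normal vector field $\phi\nu$ where $\phi$ is a bump function supported near a point of $\partial\Omega$: the first variation of $v$ is $\int_{\partial\Omega}\theta^\alpha\phi\,d\cH^2 \neq 0$ for suitable $\phi$, so by the implicit function theorem we can solve $v(\Omega_t) = v+\delta$, and the first variation of $a$ is then controlled linearly in $t$, hence in $\delta$.

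Granting the adjustment lemma, upper semicontinuity of $\cI$ follows: fix $v_0 \in (0,V)$ and $\eta > 0$; pick $\Omega$ with smooth boundary and $v(\Omega) = v_0$, $a(\Omega) \leq \cI(v_0) + \eta$ (using that smooth domains are dense in finite-perimeter sets in the relevant sense, or simply that the infimum defining $\cI(v_0)$ can be approached by smooth competitors); then for $v$ near $v_0$ the lemma gives a competitor with weighted volume $v$ and weighted area $\leq \cI(v_0) + \eta + C|v - v_0|$, so $\limsup_{v\to v_0}\cI(v) \leq \cI(v_0) + \eta$, and $\eta \downarrow 0$ finishes it. For lower semicontinuity, take $v_k \to v_0$ with $\cI(v_k) \to \liminf$; choose near-minimizers $\Omega_k$ with $v(\Omega_k) = v_k$ and $a(\Omega_k) \leq \cI(v_k) + 1/k$; these have uniformly bounded perimeter (since $\theta^\alpha \geq 1$), so a subsequence converges in $L^1$ to some finite-perimeter set $\Omega_\infty$ with $v(\Omega_\infty) = v_0$ (weighted volume is continuous under $L^1$ convergence since $\theta^\alpha$ is bounded), and by lower semicontinuity of weighted perimeter $a(\Omega_\infty) \leq \liminf a(\Omega_k) = \liminf \cI(v_k)$; hence $\cI(v_0) \leq \liminf_{k}\cI(v_k)$, and combined with the choice of $v_k$ this gives lower semicontinuity at $v_0$.

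The main obstacle is the volume-adjustment lemma, specifically making the constant $C$ and the range of admissible $\delta$ uniform enough to run the semicontinuity arguments — one must check that the normal speed needed to change the weighted volume by a fixed amount stays controlled, which requires a lower bound on $a(\Omega)$ (equivalently on the perimeter) for the competitors in play; this is automatic for the near-minimizers since $\cI$ is bounded below away from $0$ on compact subintervals of $(0,V)$ by a relative isoperimetric inequality on the closed manifold $\Sigma$. A minor point is that to even start one needs smooth competitors realizing values of $v$ arbitrarily close to $\cI(v_0)$; this follows from the density of smooth domains and the continuity of $v$ and $a$ under smoothing, which I would state and use without belaboring. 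I would cite the analogous statements in the unweighted case (e.g.\ \cite{Bray:thesis} or standard references on the isoperimetric profile) and simply indicate the modifications, since the weighted measures differ from the unweighted ones only by a smooth bounded-above-and-below factor.
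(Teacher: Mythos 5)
Your proof is correct and takes essentially the same route as the paper: lower semicontinuity via compactness of Caccioppoli sets and lower semicontinuity of (weighted) perimeter, and upper semicontinuity via a local barrier obtained by flowing a competitor's boundary in the normal direction to adjust weighted volume with linear control on weighted area. The only cosmetic difference is that you build the upper barrier from a smooth near-minimizer, whereas the paper's cited barrier (\S\ref{subsec:diff_ineq}) is built from the actual isoperimetric region $\Omega$ realizing $\cI(v_0)$ via the normal flow with speed $\theta^{-\alpha}\nu$ and the inverse function theorem; both versions work, and you correctly note that the constant $C$ may depend on the fixed approximation parameter $\eta$, which does not affect the $\eta\downarrow 0$ limit.
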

\begin{proof}
    By the compactness theory for Caccioppoli sets and the lower semi-continuity of mass, we have
    \[ \liminf_{v \to v_0} \cI(v) \geq \cI(v_0). \]
    By the existence of a continuous upper barrier function for $\cI$ at $v_0$ for any $v_0 \in (0, v(\Sigma))$ (namely, take $U^{2/3}$ from Proposition \ref{prop:diff_ineq2}), we also have
    \[ \limsup_{v \to v_0} \cI(v) \leq \cI(v_0), \]
    so $\cI$ is continuous.
\end{proof}

\subsection{Solutions to the ordinary differential equation}
We study solutions to the ODE
\begin{equation}\label{eqn:ode}
    f''(v) = -3f(v)^{-1/3}.
\end{equation}

Observe that $g: [0,1)\to [0,\frac \pi 4)$ given by \[g(x) =  \frac 13 \int_0^x \frac{dt}{\sqrt{1-t^{2/3}}}\] is a diffeomorphism. As such, the map $v\mapsto g^{-1}(\frac \pi 4 -v)$ extends to an even smooth map $\tilde f : [-\frac \pi4,\frac\pi4]\to\R$ so that $\tilde f(0) = 1$ and $\tilde f''(v) = -3\tilde f(v)^{-1/3}$. For $z>0$ define $f_z : (-\frac \pi 4 z,\frac\pi4z) \to \R$ by 
\begin{equation}\label{eqn:fz}
v\mapsto z^{\frac 32} \tilde f(z^{-1}v).
\end{equation}
Note that $f_z(v)$ solves \eqref{eqn:ode} and has $f_z'(0) = 0$ and $f_z(0) = z^{3/2}$. We set $\beta(z) := \frac  \pi 4 z$ and observe that $\lim_{v \to \pm \beta(z)} f_z(v) = 0$. We extend $f_z$ to all of $\R$ by zero.

\subsection{Proof of the volume bound}

Our aim is to show that $v(\Sigma) = \int_{\Sigma} \theta^{\alpha} \leq 2\pi^2$. We assume for the sake of contradiction that $v(\Sigma) = \int_{\Sigma}\theta^{\alpha} > 2\pi^2$.

\begin{claim}
There is a $\delta > 0$ so that for $z = 4\pi + \delta$, we have 
\begin{equation}\label{eq:barrier-iso-profile}
\cF(v) \geq f_z(v - \beta(z))
\end{equation}
 for all $v \in (0, 2\pi^2)$.
\end{claim}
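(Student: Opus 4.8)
The goal is to find a single barrier solution $f_z(\cdot - \beta(z))$ to the ODE \eqref{eqn:ode} that lies below $\cF$ on the entire interval $(0, 2\pi^2)$, using only $z$ slightly larger than the critical value $4\pi$. The natural strategy is a two-step argument: first establish the inequality \eqref{eq:barrier-iso-profile} for $z$ exactly equal to $4\pi$ on the open interval where $f_{4\pi}$ is positive, and then perturb.

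\textbf{Step 1: The endpoint behavior of $\cF$.} I would first record the boundary asymptotics of the weighted isoperimetric profile $\cI$, hence of $\cF = \cI^{3/2}$, as $v \to 0^+$. Since $\theta$ is smooth and positive with $\min\theta = 1$, small weighted-volume regions are well-approximated by small geodesic balls, and one gets $\cI(v) \sim c\, v^{2/3}$ as $v \to 0$; more precisely $\cF(v)/v \to 0$ and in fact $\cF(v) = o(v)$ near $0$, while the ODE solution $f_{4\pi}$ based at $v=\beta(4\pi)$ also vanishes to order $3/2$ in $(v - \text{endpoint})$ by \eqref{eqn:ode1}. The key quantitative point, which one extracts by comparing the expansions, is that near the left endpoint $v = 0$ the function $f_{4\pi}(v - \beta(4\pi))$ has exactly the critical leading constant, and one checks $\cF$ dominates it there. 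The analogous analysis at the right endpoint $v = V = v(\Sigma)$ would be needed, but here we are only asking for the inequality on $(0, 2\pi^2) \subset (0, V)$ under the contradiction hypothesis $V > 2\pi^2$, so the right end of the relevant interval is an interior point of the domain of $\cF$, where $\cF$ is continuous and positive, and $f_{4\pi}(\cdot - \beta(4\pi))$ reaches $0$ exactly at $v = 2\beta(4\pi) = 2\pi^2$. Thus at $v = 2\pi^2$ we have $\cF(2\pi^2) > 0 = f_{4\pi}(2\pi^2 - \beta(4\pi))$, giving strict inequality at the right end.

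\textbf{Step 2: ODE comparison and perturbation.} With $\cF$ dominating the barrier $f_{4\pi}(\cdot-\beta(4\pi))$ near both ends of $(0,2\pi^2)$, the comparison principle already noted in the text — that a solution of \eqref{eqn:ode} cannot touch $\cF$ from below unless they coincide, because $-3f^{-1/3}$ is increasing in $f$ and $\cF$ satisfies the differential inequality of Proposition \ref{prop:diff_ineq2} in the barrier sense — forces $\cF(v) \geq f_{4\pi}(v - \beta(4\pi))$ for all $v \in (0, 2\pi^2)$. (If equality held somewhere, since $\cF$ is continuous and positive on all of $(0,V) \supsetneq (0,2\pi^2]$ while $f_{4\pi}$ hits $0$ at the right end, one would propagate a contradiction; alternatively equality on an interval forces $\cF$ to solve the ODE, which contradicts the strict inequality at an endpoint.) Finally, to upgrade from $z = 4\pi$ to $z = 4\pi + \delta$: by continuous dependence of $f_z$ and $\beta(z) = \frac{\pi}{4}z$ on $z$, and since the domain $(-\beta(4\pi), \beta(4\pi))$ of positivity strictly expands as $z$ increases, for $\delta > 0$ small the shifted barrier $f_{4\pi+\delta}(v - \beta(4\pi+\delta))$ is defined and positive on $[0, 2\pi^2]$ (its positivity interval is $(0, 2\beta(4\pi+\delta)) = (0, \frac{\pi}{2}(4\pi+\delta)) \supsetneq (0, 2\pi^2)$), stays $C^2$-close to $f_{4\pi}(\cdot - \beta(4\pi))$ on compact subsets, and one re-runs the comparison — using that $\cF$ still dominates near $v = 0$ (the leading constant of $\cF$ strictly exceeds that of the slightly-subcritical barrier $f_{4\pi+\delta}$) and $\cF(2\pi^2) > 0 \geq$ the perturbed barrier there. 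This yields \eqref{eq:barrier-iso-profile} and proves the Claim.

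\textbf{Main obstacle.} The delicate point is Step 1: pinning down the precise leading-order behavior of $\cF$ as $v \to 0^+$ and verifying it beats the critical ODE barrier's leading constant, rather than merely matching it. This requires the sharp small-volume isoperimetric expansion for the weighted profile (with weight $\theta^\alpha$, $\theta$ smooth), and care that the normalization $\min\theta = 1$ and the exponent $3/2$ conspire to make $f_{4\pi}$ the exact borderline comparison. Everything downstream — the comparison principle and the perturbation in $z$ — is then a routine soft argument.
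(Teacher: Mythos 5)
Your proposal has a genuine gap, and the gap is precisely at the point you flag as the ``main obstacle'' --- but the resolution you sketch is not available, and the paper's actual proof uses a different idea that sidesteps the issue entirely.

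The problem is the left endpoint $v \to 0^+$. First, a computational slip: if $\cI(v) \sim c\,v^{2/3}$ then $\cF(v) = \cI(v)^{3/2} \sim c^{3/2}\,v$, which is $O(v)$, not $o(v)$. More seriously, the paper's own expansion \eqref{eqn:asym_F} gives only an \emph{upper} bound $\cF(v) \leq 6\sqrt{\pi}\,v + o(v)$, while the barrier satisfies $f_{4\pi}(v - \beta(4\pi)) = 3\sqrt{4\pi}\,v + o(v) = 6\sqrt{\pi}\,v + o(v)$. The two have the \emph{same} leading coefficient, and the only available inequality goes the wrong way for your purposes --- there is no reason $\cF$ should dominate $f_{4\pi}(\cdot - \beta(4\pi))$ near $v=0$, and on a round sphere they would agree exactly. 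Your Step~2 perturbation then makes matters worse, not better: increasing $z$ from $4\pi$ to $4\pi+\delta$ \emph{increases} the barrier's slope at $v=0$ to $3\sqrt{4\pi+\delta} > 6\sqrt{\pi}$ (the parameter $4\pi+\delta$ is super-critical, not ``slightly-subcritical'' as you write), so domination near $v=0$ definitively fails. Indeed, the claim \emph{cannot} be established unconditionally: together with \eqref{eqn:asym_F} it forces $3\sqrt{4\pi+\delta}\leq 6\sqrt{\pi}$, which is false. The claim is only proved under the contradiction hypothesis $V > 2\pi^2$, which the paper then uses to produce the contradiction. Any correct proof must therefore use $V > 2\pi^2$ in an essential way, and must avoid a leading-order comparison at $v=0$.

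The paper's mechanism is a horizontal shift plus a sweep in $z$. One considers $g_z(v) = f_z(v - \beta(z) - \eps z)$, whose positivity interval is $(\eps z,\, 2\beta(z)+\eps z)$. Choosing $\delta,\eps>0$ small so that $2\beta(z)+\eps z < V$ for all $z \in (0,4\pi+\delta)$ (this is where $V>2\pi^2 = 2\beta(4\pi)$ is used), both zeros of $g_z$ lie strictly inside $(0,V)$, where $\cF > 0$; hence $\cF > g_z$ at both endpoints of the positivity interval, with no delicate asymptotic matching needed at $v=0$. Since $g_z \to 0$ uniformly as $z \to 0$, one then sweeps $z$ upward from $0$: a first crossing would be an interior touching of $\cF$ from below by a solution of \eqref{eqn:ode}, forbidden by Proposition~\ref{prop:diff_ineq2} and the monotonicity of $f \mapsto -3f^{-1/3}$. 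Letting $z \uparrow 4\pi+\delta$ and then $\eps \downarrow 0$ gives \eqref{eq:barrier-iso-profile}. You should replace your Step~1 and Step~2 with this shifted sweep argument; without the $\eps z$ shift and the continuity-in-$z$ sweep-out, the comparison cannot be initialized.
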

\begin{proof}
The key ingredient in the proof is the following observation in ODE comparison. For $c \in (0,1)$, set $f_{z,c}(v) = c^{3/4}f_z(v)$, where $f_z$ is defined in \eqref{eqn:fz}. It follows that $f_{z,c}$ solves the equation
\begin{equation}\label{eqn:odec}
    f'' = -3cf^{-1/3}.
\end{equation}
By Propositions \ref{prop:cts-iso} and \ref{prop:diff_ineq2} combined with the fact that $\cF$ is positive in $(0, v(\Sigma))$, it follows that no solution $f(v)$ to \eqref{eqn:odec} for $c \in (0,1)$ can touch $\cF(v)$ from below (meaning there is some $v_0$ so that $f(v) \leq \cF(v)$ for all $v \in (v_0 - \eps, v_0 + \eps)$ and $f(v_0) = \cF(v_0)$). Indeed, take $U$ to be the smooth function defined near $v_0$ from Proposition \ref{prop:diff_ineq2}. Then the smooth function $f$ touches the smooth function $U$ from below at $v_0$ and
\[ U''(v_0) \leq -3U(v_0)^{-1/3} = -3\cF(v_0)^{-1/3} < -3c\cF(v_0)^{-1/3} = -3cf(v_0)^{-1/3} = f''(v_0), \]
a contradiction.

Now let $\delta > 0$ and $\eps > 0$ be sufficiently small so that $\frac \pi 2 z + \eps z < v(\Sigma)$ for $z \in (0, 4\pi + \delta)$, which is possible since $v(\Sigma) > 2\pi^2 = 2\beta(4\pi)$. Consider the graph of 
\[ g_{z,c}(v) = f_{z,c}(v - \beta(z) - \eps z) \]
for $v \in [\eps z, 2\beta(z)+ \eps z]$. Note that
\[ g_{z,c}(\eps z) = g_{z,c}(2\beta(z) + \eps z) = 0 < \min\{\cF(\eps z), \cF(2\beta(z) + \eps z)\}. \]
Moreover, $g_{z,c}$ converges uniformly to zero as $z \to 0$. Hence, if $g_{z^*,c}(v^*) > \cF(v^*)$ for some $v^*$ and $z^*$, then there must be some $z \in (0, z^*]$ so that $g_{z,c}$ touches $\cF$ from below, which contradicts the ODE comparison observation above. Therefore, we have $\cF \geq g_{z,c}$ for every $z \in (0, 4\pi + \delta)$. We send $z$ to $4\pi + \delta$ and then $\eps$ to $0$, which proves the assertion for $f_{z,c}$. Then we send $c$ to $1$, which proves the claim. \end{proof}

We now study the asymptotic behavior of $\cF$ and $f_{4\pi+\delta}(v - \beta(4\pi+\delta))$ as $v \to 0$.

Since $f_{4\pi + \delta}(-\beta(4\pi+\delta)) = 0$, we have $f_{4\pi + \delta}'(-\beta(4\pi+\delta)) = 3\sqrt{4\pi + \delta}$. Hence, we have
\begin{equation}\label{eqn:asym_f}
    f_{4\pi+\delta}(v - \beta(4\pi+\delta)) = (3\sqrt{4\pi + \delta})v + o(v)
\end{equation}
as $v \to 0$.

On the other hand, we compute an upper bound for $\cF$ as $v \to 0$ by studying small geodesic balls. Take $x_0$ so that $\theta(x_0) = \min \theta = 1$. A straightforward computation gives
\[ v(B_r(x_0)) = \frac{4}{3}\pi r^3 + o(r^3) \]
and
\[ a(B_r(x_0)) = 4\pi r^2 + o(r^2) \]
as $r \to 0$. Solving the first equation for $r$ and plugging into the second equation, we deduce
\[ \cI(v) \leq (36\pi)^{1/3}v^{2/3} + o(v^{2/3}), \]
so
\begin{equation}\label{eqn:asym_F}
    \cF(v) \leq (6\sqrt{\pi})v + o(v).
\end{equation}

However, \eqref{eq:barrier-iso-profile}, \eqref{eqn:asym_f}, and \eqref{eqn:asym_F} imply $3\sqrt{4\pi + \delta} \leq 6\sqrt{\pi}$, which yields a contradiction. Therefore, since we normalized so that $\min\theta = 1$, we have
\begin{equation} \label{eq:concl-thmvol}\mathrm{Vol}(\Sigma, \gamma) \leq \int_{\Sigma} \theta^{\alpha} = v(\Sigma) \leq 2\pi^2, \end{equation}
which concludes the proof of Theorem \ref{thm:diameter_and_vol}.

\begin{remark}\label{rema:rigid-vol}
    We remark that the rigidity case of Theorem \ref{thm:diameter_and_vol} follows easily. We proved above that $v(\Sigma)\leq 2\pi^2$. As such, if $\mathrm{Vol}(\Sigma,\gamma) = 2\pi^2$ then \eqref{eq:concl-thmvol} and the normalization of $\theta$ gives that $\theta \equiv 1$ on $\Sigma$. Thus, \eqref{eqn:eigenfunction} gives $\lambda_{\Ric}(\gamma) \geq 2$. As such, the rigidity case in the classical Bishop--Gromov theorem implies that $(\Sigma,\gamma)$ is isometric to the round $3$-sphere. 
\end{remark}

\section{Stable Bernstein Theorem}\label{sec:proofs}
We use the estimates of the previous sections to prove the stable Bernstein theorem in $\R^5$ as well as the various consequences stated in the introduction.

Let $F: M^4 \to \R^5$ be a complete, simply connected, two-sided stable minimal immersion of codimension one, and let $g$ denote the pullback metric on $M$. On $N = M \setminus F^{-1}(\{0\})$, let $\tilde{g} = r^{-2}g$, where $r$ is the Euclidean distance function from ${0}$.

Combining Theorem \ref{thm:spectralbiRic}, Theorem \ref{thm:mububbles}, and Theorem \ref{thm:diameter_and_vol} (with the appropriate rescaling of the $\mu$-bubble metric), we have the following tool.

\begin{lemma}\label{lem:tool}
    Let $X \subset N$ be a closed subset with boundary $\partial X = \partial_+X \sqcup \partial_-X$. Suppose $d_{\tilde{g}}(\partial_+X, \partial_-X) \geq 10\pi$. Then there is a connected, relatively open subset $\Omega \subset X$ with smooth boundary $\partial \Omega = \partial_-X \sqcup \Sigma$ so that
    \begin{itemize}
        \item $\partial_-X \subset \Omega$,
        \item $\Sigma \subset X \setminus \partial X$ is a closed hypersurface,
        \item $\Omega \subset B_{10\pi}(\partial_-X)$, and
        \item any connected component $\Sigma_0$ of $\Sigma$ has intrinsic diameter at most $2\pi$ and volume at most $16\pi^2$.
    \end{itemize}
\end{lemma}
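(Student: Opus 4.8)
The plan is to assemble Lemma \ref{lem:tool} as a straightforward bookkeeping exercise that chains together the three main theorems proved in the preceding sections, keeping careful track of how constants transform under the rescaling of the $\mu$-bubble metric. First I would invoke Theorem \ref{thm:spectralbiRic}: since $F: M^4 \to \R^5$ is a complete two-sided stable minimal immersion, the conformal metric $\tilde g = r^{-2} g$ on $N$ carries a smooth function $V$ with $V \geq 1 - \tilde\lambda_{\BiRic}$ satisfying the spectral inequality $\int_N |\tilde\nabla\psi|^2 \geq \int_N V\psi^2$. This is exactly the hypothesis needed to apply the $\mu$-bubble construction, so next I would feed $(N, \tilde g)$ into Theorem \ref{thm:mububbles}: given $X \subset N$ with $\partial X = \partial_+ X \sqcup \partial_- X$ and $d_{\tilde g}(\partial_+X, \partial_-X) \geq 20\pi$, we obtain the connected relatively open $\Omega \subset X$ with $\partial_- X \subset \Omega \subset B_{20\pi}(\partial_-X)$ and $\partial\Omega = \partial_- X \sqcup \Sigma$, where each component of $\Sigma$ carries a function $\mathbf V$ with $\mathbf V \geq \tfrac34(\tfrac12 - \lambda_{\Ric}(\Sigma))$ and the associated spectral inequality. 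The first two bullet points of the Lemma are then immediate.

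For the third bullet point I would apply Theorem \ref{thm:diameter_and_vol}, but only after a conformal/homothetic rescaling to match its normalization. Theorem \ref{thm:diameter_and_vol} is stated for a closed $3$-manifold $(\Sigma^3,\gamma)$ admitting $\mathbf V \geq \alpha^{-1}(2 - \lambda_{\Ric}(\gamma))$ with $\alpha \in (0,2]$; the output of Theorem \ref{thm:mububbles} has the shape $\mathbf V \geq \tfrac34(\tfrac12 - \lambda_{\Ric})$, i.e. $\mathbf V \geq \tfrac38 - \tfrac34\lambda_{\Ric}$, which does not directly have the form $\alpha^{-1}(2 - \lambda_{\Ric})$. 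So I would rescale the metric: set $\gamma' = c\gamma$ for an appropriate constant $c > 0$, under which $\lambda_{\Ric}$, $\mathbf V$, diameters and volumes all scale by explicit powers of $c$. Choosing $c$ so that the rescaled curvature condition reads $\mathbf V' \geq \alpha^{-1}(2 - \lambda_{\Ric}(\gamma'))$ with $\alpha = 2$ (the borderline allowed value, and indeed the natural one given the coefficient $\tfrac34$ on a $3$-manifold: note $\alpha = \tfrac{4}{n-1} = 2$ when $n = 3$), I would then read off $\mathrm{diam}(\Sigma_0, \gamma') \leq \pi$ and $\mathrm{Vol}(\Sigma_0, \gamma') \leq 2\pi^2$, and convert back: $\mathrm{diam}(\Sigma_0,\gamma) \leq 2\pi$ and $\mathrm{Vol}(\Sigma_0,\gamma) \leq 16\pi^2$ precisely when $c$ corresponds to a factor-$2$ rescaling of lengths (so volumes shrink by $8$, hence $2\pi^2 \cdot 8 = 16\pi^2$ upstairs, and diameters by $2$). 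I would double-check that the spectral positivity of $\mathbf V$ is preserved — it is, since the spectral inequality $\int |\nabla\psi|^2 \geq \int \mathbf V \psi^2$ scales homogeneously — and that each component $\Sigma_0$ is itself closed and connected, which follows because $\Sigma = \partial\Omega \setminus \partial_- X$ is a closed hypersurface in the compact region $\overline\Omega$.

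The main obstacle, such as it is, is purely computational care: getting the rescaling constant exactly right so that the allowed range $\alpha \in (0,2]$ is respected (equality $\alpha = 2$ is permitted, which is what makes the numbers $2\pi$ and $16\pi^2$ come out as clean multiples of the round $S^3$ values) and so that the factor $\tfrac34$ and the shift $\tfrac12$ in Theorem \ref{thm:mububbles} are correctly absorbed. One should verify that $\tfrac34(\tfrac12 - \lambda_{\Ric}(\gamma)) = \tfrac12\big(2 - \lambda_{\Ric}(2\gamma)\big)$ up to the metric rescaling: under $\gamma \mapsto \lambda\gamma$ one has $\lambda_{\Ric}(\lambda\gamma) = \lambda^{-1}\lambda_{\Ric}(\gamma)$ and $\mathbf V \mapsto \lambda^{-1}\mathbf V$, so the condition $\mathbf V \geq \tfrac38 - \tfrac34\lambda_{\Ric}(\gamma)$ becomes, after multiplying by $\lambda^{-1}$ and identifying $\lambda = 4$, exactly $\mathbf V' \geq \tfrac{3}{32}\cdot\tfrac{4}{3}\cdot(2 - \lambda_{\Ric}(\gamma'))\cdot(\ldots)$ — I would pin down this arithmetic so that $\alpha = 2$ and the length scale factor is $2$. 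Everything else — continuity of the constructions, that $\Sigma$ is smooth and closed, that diameter and volume behave correctly under the homothety — is routine, so the proof of Lemma \ref{lem:tool} amounts to a short paragraph citing Theorems \ref{thm:spectralbiRic}, \ref{thm:mububbles}, and \ref{thm:diameter_and_vol} with the scaling normalization made explicit.
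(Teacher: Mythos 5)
Your high-level plan matches the paper's exactly: cite Theorem~\ref{thm:spectralbiRic} to get the spectral bi-Ricci positivity of $(N,\tilde g)$, feed that into Theorem~\ref{thm:mububbles} to produce the warped $\mu$-bubble $\Sigma$ with $\mathbf V \geq \tfrac34\bigl(\tfrac12 - \lambda_{\Ric}(\gamma)\bigr)$, and then rescale and apply Theorem~\ref{thm:diameter_and_vol}. The final numbers ($2\pi$, $16\pi^2$) are also right. But your determination of the constant $\alpha$ is wrong, and the identity you propose to ``verify'' is false.

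Write $\gamma' = c\gamma$. Then $\lambda_{\Ric}(\gamma') = c^{-1}\lambda_{\Ric}(\gamma)$, the spectral potential becomes $\mathbf V' = c^{-1}\mathbf V$, and the hypothesis of Theorem~\ref{thm:mususu} gives
\[
\mathbf V' \;\geq\; c^{-1}\Bigl(\tfrac38 - \tfrac34\lambda_{\Ric}(\gamma)\Bigr) \;=\; \tfrac{3}{8c} - \tfrac34\,\lambda_{\Ric}(\gamma').
\]
To put this in the form $\mathbf V' \geq \alpha^{-1}\bigl(2 - \lambda_{\Ric}(\gamma')\bigr)$ uniformly in $\lambda_{\Ric}(\gamma')$ (which can have either sign), you must match the coefficient of $\lambda_{\Ric}(\gamma')$, forcing $\alpha^{-1} = \tfrac34$, i.e. $\alpha = \tfrac43$ --- not $\alpha = 2$. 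The constant term then requires $\tfrac{3}{8c} \geq \tfrac32$, i.e. $c \leq \tfrac14$, and $c = \tfrac14$ is optimal. With $\alpha = \tfrac43 \in (0,2]$ you may apply Theorem~\ref{thm:diameter_and_vol} to $(\Sigma_0, \tfrac14\gamma)$ to get $\mathrm{diam}(\Sigma_0,\tfrac14\gamma) \leq \pi$ and $\mathrm{Vol}(\Sigma_0,\tfrac14\gamma) \leq 2\pi^2$, and undoing the rescaling (lengths $\times 2$, volumes $\times 8$) gives $\mathrm{diam}(\Sigma_0,\gamma) \leq 2\pi$ and $\mathrm{Vol}(\Sigma_0,\gamma) \leq 16\pi^2$ as needed.

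Two specific things to correct in your write-up. First, the heuristic ``the coefficient $\tfrac34$ on a $3$-manifold gives $\alpha = \tfrac{4}{n-1} = 2$'' conflates the \emph{upper bound} on the allowed range (which is $\alpha \leq 2$, from Shen--Ye applied to $\Sigma^3$) with the \emph{actual} value forced by the coefficient, which is $\alpha = \tfrac43$; there is no choice of $c$ that realizes $\alpha = 2$, since you would need the $\lambda_{\Ric}$-coefficients $\tfrac34$ and $\tfrac12$ to agree. Second, the claimed identity $\tfrac34(\tfrac12 - \lambda_{\Ric}(\gamma)) = \tfrac12(2 - \lambda_{\Ric}(2\gamma))$ is false (the right side equals $1 - \tfrac14\lambda_{\Ric}(\gamma)$, not $\tfrac38 - \tfrac34\lambda_{\Ric}(\gamma)$), and ``identifying $\lambda = 4$'' would rescale in the wrong direction: $\gamma' = 4\gamma$ enlarges lengths and makes the constant term $\tfrac{3}{32}$, far short of the needed $\tfrac32$. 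You want $\gamma' = \tfrac14\gamma$. Neither error changes the final answer, but as written the arithmetic does not verify.
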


We first prove \emph{a priori} that any complete, simply connected, two-sided stable minimal immersion $M^4\to\R^5$ has Euclidean volume growth. The proof follows the same strategy as \cite{CL:aniso}. 

\begin{theorem}\label{thm:Euc_vol_growth}
    Any complete, simply connected, two-sided stable minimal immersion $M^4 \to \R^5$ satisfies
    \[ \cH^4(B_{\rho}(x_0) \subset M) \leq 8\pi^2e^{44\pi}\rho^4\]
for all $\rho>0$ and $x_0\in M$.
\end{theorem}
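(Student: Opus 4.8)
The plan is to derive the Euclidean volume bound from the localized area bound implicit in Lemma~\ref{lem:tool}, iterated along dyadic scales using the conformal geometry of $(N,\tilde g)$. The key point is that in the conformal metric $\tilde g = r^{-2} g$, the function $t = \log r$ is essentially a distance-type function: by Proposition~\ref{prop:conflaplogr} we have $|\tilde\nabla (\log r)|^2_{\tilde g} = |dr|^2 \le 1$, so $\log r$ is $1$-Lipschitz on $(N,\tilde g)$, and the $\tilde g$-distance between the level sets $\{r = a\}$ and $\{r = b\}$ (for $a < b$) is at least $\log(b/a)$. Thus a ``dyadic annulus'' $\{e^{k\cdot 20\pi} \le r \le e^{(k+1)\cdot 20\pi}\}$ has $\tilde g$-width at least $20\pi$, which is exactly the separation hypothesis needed to apply Lemma~\ref{lem:tool}.

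First I would set up the iteration. Fix $x_0$ with $F(x_0) = \vec 0$ and work on the universal-type covering/simply connected piece so that the topological hypotheses of the $\mu$-bubble construction hold. For each integer $k \ge 0$, apply Lemma~\ref{lem:tool} to a region $X_k \subset N$ with $\partial_- X_k$ a component near $\{r \approx \rho_0 e^{20\pi k}\}$ (for suitable base scale $\rho_0$) and $\partial_+ X_k$ near $\{r \approx \rho_0 e^{20\pi(k+1)}\}$; the separation $d_{\tilde g}(\partial_+ X_k, \partial_- X_k) \ge 20\pi$ holds by the Lipschitz bound above. This produces a $\mu$-bubble $\Sigma_k$ contained in $B_{20\pi}^{\tilde g}(\partial_- X_k)$, whose every component has $\tilde g$-volume (area, since it is $3$-dimensional) at most $16\pi^2$ and intrinsic diameter at most $2\pi$. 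Crucially, $\Sigma_k$ separates $x_0$ from infinity and is contained in a controlled range of $r$, say $\rho_0 e^{20\pi k} \le r \le \rho_0 e^{20\pi(k+1)} \cdot C$ for a fixed constant. The slab between consecutive $\mu$-bubbles $\Sigma_k$ and $\Sigma_{k+1}$ can then be swept out, and its $\tilde g$-volume estimated by a coarea/Fubini argument using $\log r$ as the sweepout function: the volume of the slab is at most (width in $\log r$) $\times$ (sup over slices of slice area) $\lesssim 20\pi \cdot 16\pi^2$, up to adjusting constants for overlaps.

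Next I would convert back to the $g$-metric. Since $d\mu = r^n\, d\tilde\mu$ with $n = 4$, integrating over the region $\{r \le R\}$ splits into dyadic pieces $\{ \rho_0 e^{20\pi k} \le r \le \rho_0 e^{20\pi(k+1)}\}$, on each of which $r^4 \le (\rho_0 e^{20\pi(k+1)})^4$ while the $\tilde g$-volume of that piece is bounded by a universal constant ($\lesssim \pi^3$, coming from the $\mu$-bubble area bound times the width). Summing the geometric series over $k$ with $\rho_0 e^{20\pi k} \le R$ gives $\cH^4(\{r \le R\} \cap M) \le C R^4$, with the constant $C$ of the form $8\pi^2 e^{84\pi}$ after bookkeeping the precise scales (the $e^{84\pi}$ absorbing the overlap between the radius cutoff $20\pi$ in Lemma~\ref{lem:tool}, the base scale $\rho_0 = e^{-21\pi}$ from Theorem~\ref{thm:localize}, and the factor $r^4$). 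Finally, since $B_\rho(x_0) \subset M$ in the intrinsic metric sits inside $\{r \le \rho\}$ (as $r \le d_M(x_0, \cdot)$ because $F$ is distance-nonincreasing and $F(x_0) = \vec 0$), the stated bound $\cH^4(B_\rho(x_0)) \le 8\pi^2 e^{84\pi}\rho^4$ follows.

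The main obstacle I expect is the bookkeeping of constants and the geometric reasoning needed to guarantee that consecutive $\mu$-bubbles $\Sigma_k$ genuinely ``stack'' to exhaust the region $\{r \le R\}$ without gaps and without too much overlap — i.e., that $\Omega \subset B_{20\pi}(\partial_- X)$ together with the $\log r$-Lipschitz control pins down $\Sigma_k$ to a definite $r$-range, so that the union of the slabs covers (a component of) $F^{-1}(B_R)$ containing $x_0$. One also needs the $\mu$-bubbles to have boundedly many components, or rather to argue that only the component relevant to $x_0$'s connected piece contributes (this is where simple connectivity and the connectedness of $\partial X$ are used, exactly as in \cite{CL:aniso}). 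The diameter bound on $\Sigma_k$ (rather than just the volume bound) is what keeps the relevant component within a single dyadic shell. Once the sweepout picture is in place, the conversion $d\mu = r^4 d\tilde\mu$ and the geometric series are routine.
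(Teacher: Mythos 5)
Your proposal has a genuine gap. You try to bound the $\tilde g$-volume of each dyadic slab by a coarea/sweepout argument with $\log r$ as the sweepout function, asserting that each slice has area $\lesssim 16\pi^2$. But the area bound from Lemma~\ref{lem:tool} applies only to the single $\mu$-bubble $\Sigma$ produced by that lemma, and $\Sigma$ is \emph{not} a level set of $r$ (nor of $\log r$); the areas of generic level sets $\{r=c\}$ are not controlled by anything in the paper. So the claimed bound ``slab volume $\leq$ width $\times$ sup of slice areas $\lesssim 20\pi\cdot 16\pi^2$'' has no justification, the $\tilde g$-volume of each dyadic piece is not uniformly bounded, and the geometric series you want to sum has uncontrolled terms. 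The approach as written does not close.

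The paper's actual proof avoids the dyadic iteration entirely and applies Lemma~\ref{lem:tool} exactly once, to $X = N\setminus B_\rho(x_0)$ with $\partial_- X = \partial B_\rho(x_0)$. The crucial ingredient you are missing is the isoperimetric inequality for minimal submanifolds of Euclidean space (Michael--Simon, made sharp by Brendle), which converts an area bound on the boundary of a region of a minimal hypersurface directly into a volume bound on that region, $\cH^4(\Omega)\leq c_{\mathrm{iso}}\,\cH^3(\partial\Omega)^{4/3}$. Concretely: the $\mu$-bubble $\Sigma$ has $\tilde g$-area at most $16\pi^2$ and lies where $r\leq \rho e^{21\pi}$, hence its $g$-area is at most $16\pi^2 e^{63\pi}\rho^3$; simple connectivity and one-endedness of $M$ give a precompact $\Omega\supset B_\rho(x_0)$ whose boundary is a single component of $\Sigma$, and Brendle's inequality yields $\cH^4(B_\rho(x_0))\leq \cH^4(\Omega)\leq c_{\mathrm{iso}}(16\pi^2 e^{63\pi}\rho^3)^{4/3}=8\pi^2 e^{84\pi}\rho^4$. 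Your iterated construction could in principle be repaired by applying this isoperimetric inequality at each scale, but you never invoke it, and without it the slab-volume step fails; the single-scale argument is also what produces the stated constant (note $e^{84\pi}=(e^{63\pi})^{4/3}$ and $c_{\mathrm{iso}}(16\pi^2)^{4/3}=8\pi^2$), whereas your heuristic bookkeeping of $e^{84\pi}$ does not track the actual source of that exponent.
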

\begin{proof}
Without loss of generality we can assume that $0 \in F(M)$ and that $F(x_0) = 0$. Given $\rho > 0$, by Lemma \ref{lem:tool} (with $X = N \setminus B_{\rho}(x_0)$), there is a relatively open subset $\tilde{\Omega} \subset N \setminus B_{\rho}(x_0)$ such that
    \begin{itemize}
        \item $\partial B_{\rho}(x_0) \subset \tilde{\Omega}$,
        \item $\tilde{\Omega} \subset \tilde{B}_{10\pi}(\partial B_{\rho}(x_0))$, and
        \item any connected component of $\partial \tilde{\Omega}\setminus \partial B_{\rho}(x_0)$ is smooth with $\tilde{g}$-volume at most $16\pi^2$.
    \end{itemize}
    Converting back to the metric $g$ (see \cite[Lemma 6.2]{CL:aniso}), we have
    \[ \tilde{\Omega} \subset \{x \in M : d_M(x,\partial B_{\rho}(x_0)) < \rho e^{10\pi}\} \subset B_{\rho e^{11\pi}}(x_0). \]
    Hence, any connected component of $\partial \tilde{\Omega}\setminus \partial B_{\rho}(x_0)$ has $g$-volume at most $16\pi^2e^{33\pi}\rho^3$. Since $M$ is simply connected and has one end (by \cite{CSZ:end}), there is a precompact open set $\Omega$ containing $B_{\rho}(x_0) \cup \tilde{\Omega}$ that has exactly one boundary component, which is one of the components of $\partial \tilde{\Omega}\setminus \partial B_{\rho}(x_0)$. By the isoperimetric inequality for minimal hypersurfaces in $\R^{n+1}$ \cite{MS:sobolev} (we use the sharp version due to Brendle \cite{Brendle:iso} to improve the resulting constant), we have
    \[ \cH^4(B_\rho(x_0) \subset M) \leq c_{\text{iso}}(16\pi^2)^{4/3}e^{44\pi}\rho^4, \]
    where $c_{\text{iso}} = (128\pi^2)^{-1/3}$ in this dimension. This concludes the proof.
\end{proof}

\begin{proof}[Proof of Theorem \ref{thm:main}]
Suppose that $M^4\to\R^5$ is a complete, two-sided stable minimal immersion. Since stability passes to the universal cover, we can assume that $M$ is simply connected. Theorem \ref{thm:Euc_vol_growth} implies that $M$ has (intrinsic) Euclidean volume growth. Thus, \cite{SSY}  implies that $M$ is flat. 
\end{proof}

\begin{proof}[Proof of Corollary \ref{coro:curv-est}]
Arguing exactly as in \cite[Corollary 2.5]{CLS:stable}, if the asserted curvature estimates failed, then there would exist a non-flat complete, two-sided stable minimal immersion $M^4\to\R^5$. This contradicts Theorem \ref{thm:main}. 
\end{proof}

\begin{proof}[Proof of Theorem \ref{thm:localize}]
    By Lemma \ref{lem:tool}, there is a relatively open set $\tilde{\Omega} \subset M\setminus F^{-1}(\{0\})$ so that
    \begin{itemize}
        \item $\partial M \subset \tilde{\Omega}$,
        \item $\tilde{\Omega} \subset \tilde{B}_{10\pi}(\partial M)$, and
        \item any connected component of $\partial \tilde{\Omega} \setminus \partial M$ is smooth with $\tilde{g}$-volume at most $16\pi^2$.
    \end{itemize}
    Since $M$ is simply connected and $\partial M$ is connected, we can find an open subset $M' \Subset \mathrm{Int}(M)$ so that $\partial M'$ is connected and consists of one of the components of $\partial \tilde{\Omega} \setminus \partial M$. Converting between distances in $g$ and distances in $\tilde{g}$ by \cite[Lemma 25]{CL:aniso}, we have $M_{\rho_0}^* \subset M'$, where $\rho_0 = e^{-11\pi}$ and $M_{\rho_0}^*$ is the connected component of $F^{-1}(B_{\R^5}(0, \rho_0))$ that contains $x_0$. By the same calculation as in the proof of Theorem \ref{thm:Euc_vol_growth}, we have
    \[ \cH^4(M_{\rho_0}^*) \leq 8\pi^2.\]
    This finishes the proof.
\end{proof}

\begin{proof}[Proof of Theorem \ref{theo:morse}]
We follow \cite[\S 6]{CL:R4}. By \cite[\S 3]{T}, if a complete, two-sided minimal immersion $M^4\to\R^5$ has $\int_M |A_M|^4<\infty$, then $M$ has finite Morse index.

Now assume $M$ has finite Morse index. By \cite{li-wang-finite-index}, $M$ has finitely many ends and the space of $L^2$ harmonic 1-forms is finite. Recall by \cite[Proposition 1]{FC} that there is a compact subset $C \subset M$ so that $M \setminus C$ is stable. Fix $x_0 \in M$. Arguing as in \cite[Lemma 21]{CL:R4}, there is a $k<\infty$ so that for any $\rho>0$ sufficiently large (ensuring that $C \subset B_\rho(x_0))$, if $\Omega$ is a bounded open subset with smooth boundary satisfying
\begin{itemize}
    \item $B_\rho(x_0)\subset \Omega$, and
    \item $M\setminus\Omega$ has no bounded components,
\end{itemize}
then $\partial\Omega$ has $k$ components. The proof of Theorem \ref{thm:Euc_vol_growth} directly carries over to prove that
    \[ \cH^4(B_{\rho}(x_0) \subset M) \leq k\, 8\pi^2e^{44\pi}\rho^4\]
for all $\rho\gg0$ sufficiently large. Using the $L^4$-estimates from \cite{SSY} on the exterior region of $M$ we thus conclude that $\int_M |A_M|^4<\infty$.

It follows from \cite[Theorem A]{Anderson} (cf.\ \cite{SZ,T}) that the ends of $M$ are graphical over hyperplanes with bounded slope (see \cite[p.\ 22]{Anderson}). Granted this, \cite[Proposition 3]{S:sym} gives they are regular at infinity. This completes the proof. 
\end{proof}

\bibliographystyle{amsplain}
\bibliography{bib.bib}

@article{Carron:Kato,
	author = {Carron, Gilles},
	date-added = {2025-02-07 16:25:40 -0800},
	date-modified = {2025-02-07 16:25:50 -0800},
	doi = {10.5802/aif.3346},
	fjournal = {Universit\'e{} de Grenoble. Annales de l'Institut Fourier},
	issn = {0373-0956,1777-5310},
	journal = {Ann. Inst. Fourier (Grenoble)},
	mrclass = {53C21 (31C12 58C40 58J35 58J50)},
	mrnumber = {4286831},
	mrreviewer = {Nelia\ Charalambous},
	number = {7},
	pages = {3095--3167},
	title = {Geometric inequalities for manifolds with {R}icci curvature in the {K}ato class},
	url = {https://doi.org/10.5802/aif.3346},
	volume = {69},
	year = {2019},
	bdsk-url-1 = {https://doi.org/10.5802/aif.3346}}

@article{Morgan:reg-iso,
	author = {Morgan, Frank},
	date-added = {2025-02-06 09:32:58 -0800},
	date-modified = {2025-02-06 09:33:02 -0800},
	doi = {10.1090/S0002-9947-03-03061-7},
	fjournal = {Transactions of the American Mathematical Society},
	issn = {0002-9947,1088-6850},
	journal = {Trans. Amer. Math. Soc.},
	mrclass = {49Q20 (49N60 53C42)},
	mrnumber = {1997594},
	mrreviewer = {Monica\ Torres},
	number = {12},
	pages = {5041--5052},
	title = {Regularity of isoperimetric hypersurfaces in {R}iemannian manifolds},
	url = {https://doi.org/10.1090/S0002-9947-03-03061-7},
	volume = {355},
	year = {2003},
	bdsk-url-1 = {https://doi.org/10.1090/S0002-9947-03-03061-7}}

@article{CL:soapbubbles,
	author = {Chodosh, Otis and Li, Chao},
	date-added = {2025-02-05 10:39:12 -0800},
	date-modified = {2025-02-05 10:39:22 -0800},
	doi = {10.4007/annals.2024.199.2.3},
	fjournal = {Annals of Mathematics. Second Series},
	issn = {0003-486X,1939-8980},
	journal = {Ann. of Math. (2)},
	mrclass = {53C21 (53A10)},
	mrnumber = {4713021},
	number = {2},
	pages = {707--740},
	title = {Generalized soap bubbles and the topology of manifolds with positive scalar curvature},
	url = {https://doi.org/10.4007/annals.2024.199.2.3},
	volume = {199},
	year = {2024},
	bdsk-url-1 = {https://doi.org/10.4007/annals.2024.199.2.3}}

@article{CMR,
	author = {Catino, Giovanni and Mastrolia, Paolo and Roncoroni, Alberto},
	date-added = {2025-02-05 10:36:38 -0800},
	date-modified = {2025-02-05 10:36:41 -0800},
	doi = {10.1007/s00039-024-00662-1},
	fjournal = {Geometric and Functional Analysis},
	issn = {1016-443X,1420-8970},
	journal = {Geom. Funct. Anal.},
	mrclass = {53C42 (53C21)},
	mrnumber = {4706440},
	number = {1},
	pages = {1--18},
	title = {Two rigidity results for stable minimal hypersurfaces},
	url = {https://doi.org/10.1007/s00039-024-00662-1},
	volume = {34},
	year = {2024},
	bdsk-url-1 = {https://doi.org/10.1007/s00039-024-00662-1}}

@article{CL:R4,
	author = {Chodosh, Otis and Li, Chao},
	date-added = {2025-02-05 10:36:04 -0800},
	date-modified = {2025-02-05 10:36:10 -0800},
	doi = {10.4310/acta.2024.v233.n1.a1},
	fjournal = {Acta Mathematica},
	issn = {0001-5962,1871-2509},
	journal = {Acta Math.},
	mrclass = {53A10 (53C42)},
	mrnumber = {4816633},
	number = {1},
	pages = {1--31},
	title = {Stable minimal hypersurfaces in {${\bf R}^4$}},
	url = {https://doi.org/10.4310/acta.2024.v233.n1.a1},
	volume = {233},
	year = {2024},
	bdsk-url-1 = {https://doi.org/10.4310/acta.2024.v233.n1.a1}}

@article{Tam,
	author = {Tam, Luen-Fai},
	date-added = {2025-02-05 06:56:16 -0800},
	date-modified = {2025-02-05 06:56:35 -0800},
	journal = {\url{https://arxiv.org/abs/2409.04947}},
	title = {Some estimates on stable minimal hypersurfaces in {E}uclidean space},
	year = {2024}}

@article{AX:note,
	author = {Antonelli, Gioacchino and Xu, Kai},
	date-added = {2025-02-05 06:55:41 -0800},
	date-modified = {2025-02-05 06:56:10 -0800},
	journal = {\url{https://services.math.duke.edu/~kx35/files/note_stable_bernstein.pdf}},
	title = {A note on the stable {B}ernstein theorem},
	year = {2024}}

@article{Mazet,
	author = {Mazet, Laurent},
	date-added = {2025-02-05 06:55:18 -0800},
	date-modified = {2025-02-05 06:55:36 -0800},
	journal = {\url{https://arxiv.org/abs/2405.14676}},
	title = {Stable minimal hypersurfaces in $\mathbb{R}^6$},
	year = {2024}}

@article{AntonelliXu:spect,
	author = {Antonelli, Gioacchino and Xu, Kai},
	date-added = {2025-02-05 06:53:51 -0800},
	date-modified = {2025-02-05 06:54:43 -0800},
	journal = {\url{https://arxiv.org/abs/2405.08918}},
	title = {New spectral {B}ishop-{G}romov and {B}onnet-{M}yers theorems and applications to isoperimetry},
	year = {2024}}

@book{Lee,
	author = {Lee, John M.},
	edition = {second},
	pages = {xiii+437},
	publisher = {Springer, Cham},
	series = {Graduate Texts in Mathematics},
	title = {Introduction to {R}iemannian manifolds},
	volume = {176},
	year = {2018}}

@article{Anderson,
	author = {Anderson, Michael},
	date-added = {2023-12-29 14:12:34 -0800},
	date-modified = {2025-02-05 10:31:15 -0800},
	journal = {Institut des Hautes {\'E}tudes Scientifiques, '\url{http://www.math.stonybrook.edu/~anderson/compactif.pdf}},
	title = {The {C}ompactification of a {M}inimal {S}ubmanifold in {E}uclidean {S}pace by the {G}auss {M}ap},
	year = {1984}}

@article{T,
	author = {Tysk, Johan},
	date-added = {2023-12-29 13:49:59 -0800},
	date-modified = {2023-12-29 13:50:01 -0800},
	doi = {10.2307/2046961},
	fjournal = {Proceedings of the American Mathematical Society},
	issn = {0002-9939,1088-6826},
	journal = {Proc. Amer. Math. Soc.},
	mrclass = {53C42 (58C40 58E15)},
	mrnumber = {946639},
	mrreviewer = {M.\ Elisa G. G. de Oliveira},
	number = {2},
	pages = {429--435},
	title = {Finiteness of index and total scalar curvature for minimal hypersurfaces},
	url = {https://doi.org/10.2307/2046961},
	volume = {105},
	year = {1989},
	bdsk-url-1 = {https://doi.org/10.2307/2046961}}

@article{BHJ,
	author = {Brendle, Simon and Hirsch, Sven and Johne, Florian},
	date-added = {2023-12-29 08:31:14 -0800},
	date-modified = {2025-02-05 10:31:36 -0800},
	doi = {10.1002/cpa.22137},
	fjournal = {Communications on Pure and Applied Mathematics},
	issn = {0010-3640,1097-0312},
	journal = {Comm. Pure Appl. Math.},
	mrclass = {53C21 (53C20)},
	number = {1},
	pages = {441--456},
	title = {A generalization of {G}eroch's conjecture},
	url = {https://doi.org/10.1002/cpa.22137},
	volume = {77},
	year = {2024},
	bdsk-url-1 = {https://doi.org/10.1002/cpa.22137}}

@article{SY:stable,
	author = {Shen, Ying and Ye, Rugang},
	date-added = {2023-12-29 08:26:24 -0800},
	date-modified = {2023-12-29 08:26:27 -0800},
	doi = {10.1215/S0012-7094-96-08505-1},
	fjournal = {Duke Mathematical Journal},
	issn = {0012-7094,1547-7398},
	journal = {Duke Math. J.},
	mrclass = {53C42 (53C21)},
	mrnumber = {1412440},
	mrreviewer = {Shun\ Hui\ Zhu},
	number = {1},
	pages = {109--116},
	title = {On stable minimal surfaces in manifolds of positive bi-{R}icci curvatures},
	url = {https://doi.org/10.1215/S0012-7094-96-08505-1},
	volume = {85},
	year = {1996},
	bdsk-url-1 = {https://doi.org/10.1215/S0012-7094-96-08505-1}}

@article{FC,
	author = {Fischer-Colbrie, Doris},
	date-added = {2023-12-23 06:48:55 -0800},
	date-modified = {2023-12-23 06:48:55 -0800},
	doi = {10.1007/BF01394782},
	fjournal = {Inventiones Mathematicae},
	issn = {0020-9910},
	journal = {Invent. Math.},
	mrclass = {53C42 (53A10 58E12)},
	mrnumber = {808112},
	mrreviewer = {Ulla Thiel},
	number = {1},
	pages = {121--132},
	title = {On complete minimal surfaces with finite {M}orse index in three-manifolds},
	url = {https://doi.org/10.1007/BF01394782},
	volume = {82},
	year = {1985},
	bdsk-url-1 = {https://doi.org/10.1007/BF01394782}}

@article{Osserman,
	author = {Osserman, Robert},
	date-added = {2023-12-23 06:48:12 -0800},
	date-modified = {2023-12-23 06:48:12 -0800},
	doi = {10.2307/1970396},
	fjournal = {Annals of Mathematics. Second Series},
	issn = {0003-486X},
	journal = {Ann. of Math. (2)},
	mrclass = {53.04},
	mrnumber = {179701},
	mrreviewer = {H. B. Jenkins},
	pages = {340--364},
	title = {Global properties of minimal surfaces in {$E^{3}$} and {$E^{n}$}},
	url = {https://doi.org/10.2307/1970396},
	volume = {80},
	year = {1964},
	bdsk-url-1 = {https://doi.org/10.2307/1970396}}

@incollection{Gulliver,
	author = {Gulliver, Robert},
	booktitle = {Geometric measure theory and the calculus of variations ({A}rcata, {C}alif., 1984)},
	date-added = {2023-12-23 06:48:06 -0800},
	date-modified = {2023-12-23 06:48:06 -0800},
	doi = {10.1090/pspum/044/840274},
	mrclass = {53A10 (35J60 58E12)},
	mrnumber = {840274},
	mrreviewer = {Ulla Thiel},
	pages = {207--211},
	publisher = {Amer. Math. Soc., Providence, RI},
	series = {Proc. Sympos. Pure Math.},
	title = {Index and total curvature of complete minimal surfaces},
	url = {https://doi.org/10.1090/pspum/044/840274},
	volume = {44},
	year = {1986},
	bdsk-url-1 = {https://doi.org/10.1090/pspum/044/840274}}

@article{S:sym,
	author = {Schoen, Richard},
	date-added = {2023-12-23 06:33:01 -0800},
	date-modified = {2025-02-05 10:32:34 -0800},
	fjournal = {Journal of Differential Geometry},
	issn = {0022-040X},
	journal = {J. Differential Geom.},
	mrclass = {53A10 (58E12)},
	mrnumber = {730928},
	mrreviewer = {V. M. Miklyukov},
	number = {4},
	pages = {791--809 (1984)},
	title = {Uniqueness, symmetry, and embeddedness of minimal surfaces},
	url = {http://projecteuclid.org.stanford.idm.oclc.org/euclid.jdg/1214438183},
	volume = {18},
	year = {1983},
	bdsk-url-1 = {http://projecteuclid.org.stanford.idm.oclc.org/euclid.jdg/1214438183}}

@article{CLS:stable,
	author = {Otis Chodosh and Chao Li and Douglas Stryker},
	journal = {J.\ Eur.\ Math.\ Soc.,},
	pages = {published online first},
	title = {Complete stable minimal hypersurfaces in positively curved 4-manifolds},
	year = {2024}}

@article{CL:aniso,
	author = {Chodosh, Otis and Li, Chao},
	fjournal = {Forum of Mathematics. Pi},
	journal = {Forum Math. Pi},
	pages = {Paper No. e3, 22},
	title = {Stable anisotropic minimal hypersurfaces in {${\bf R}^4$}},
	volume = {11},
	year = {2023}}

@incollection{GL:conf,
	author = {Gulliver, Robert and Lawson, Jr., H. Blaine},
	booktitle = {Geometric measure theory and the calculus of variations ({A}rcata, {C}alif., 1984)},
	pages = {213--237},
	publisher = {Amer. Math. Soc., Providence, RI},
	series = {Proc. Sympos. Pure Math.},
	title = {The structure of stable minimal hypersurfaces near a singularity},
	volume = {44},
	year = {1986}}

@article{xu:biRic,
	author = {Xu, Kai},
	fjournal = {Transactions of the American Mathematical Society},
	journal = {Trans. Amer. Math. Soc.},
	number = {3},
	pages = {2091--2112},
	title = {Dimension constraints in some problems involving intermediate curvature},
	volume = {378},
	year = {2025}}

@article{FCS:stable,
	author = {Fischer-Colbrie, Doris and Schoen, Richard},
	doi = {10.1002/cpa.3160330206},
	fjournal = {Communications on Pure and Applied Mathematics},
	issn = {0010-3640,1097-0312},
	journal = {Comm. Pure Appl. Math.},
	mrclass = {53C40 (58E12)},
	mrnumber = {562550},
	mrreviewer = {Themistocles\ M.\ Rassias},
	number = {2},
	pages = {199--211},
	title = {The structure of complete stable minimal surfaces in {$3$}-manifolds of nonnegative scalar curvature},
	url = {https://doi.org/10.1002/cpa.3160330206},
	volume = {33},
	year = {1980},
	bdsk-url-1 = {https://doi.org/10.1002/cpa.3160330206}}

@article{SY:general,
	author = {Shen, Ying and Ye, Rugang},
	date-modified = {2025-11-01 06:20:43 -0700},
	journal = {\url{https://arxiv.org/abs/dg-ga/9708014}},
	title = {{On the geometry and topology of manifolds of positive bi-{R}icci curvature}},
	year = 1997}

@article{zhu:mububbles,
	author = {Zhu, Jintian},
	fjournal = {Transactions of the American Mathematical Society},
	journal = {Trans. Amer. Math. Soc.},
	number = {2},
	pages = {1497--1511},
	title = {Width estimate and doubly warped product},
	volume = {374},
	year = {2021}}

@article{Gromov:metric-inequalities,
	author = {Gromov, Misha},
	date-added = {2022-02-01 08:14:06 -0800},
	date-modified = {2022-02-01 08:14:06 -0800},
	doi = {10.1007/s00039-018-0453-z},
	fjournal = {Geometric and Functional Analysis},
	issn = {1016-443X},
	journal = {Geom. Funct. Anal.},
	number = {3},
	pages = {645--726},
	title = {Metric inequalities with scalar curvature},
	url = {https://doi.org/10.1007/s00039-018-0453-z},
	volume = {28},
	year = {2018},
	bdsk-url-1 = {https://doi.org/10.1007/s00039-018-0453-z}}

@incollection{Gromov:mububbles,
	author = {Gromov, Misha},
	booktitle = {Functional analysis on the eve of the 21st Century, {V}ol. {II} ({N}ew {B}runswick, {NJ}, 1993)},
	date-added = {2022-02-01 08:14:06 -0800},
	date-modified = {2022-02-01 08:14:06 -0800},
	doi = {10.1007/s10107-010-0354-x},
	pages = {1--213},
	publisher = {Birkh\"{a}user Boston, Boston, MA},
	series = {Progr. Math.},
	title = {Positive curvature, macroscopic dimension, spectral gaps and higher signatures},
	url = {https://doi.org/10.1007/s10107-010-0354-x},
	volume = {132},
	year = {1996},
	bdsk-url-1 = {https://doi.org/10.1007/s10107-010-0354-x}}

@article{SSY,
	author = {Schoen, Richard and Simon, Leon and Yau, Shing-Tung},
	fjournal = {Acta Mathematica},
	journal = {Acta Math.},
	number = {3-4},
	pages = {275--288},
	title = {Curvature estimates for minimal hypersurfaces},
	volume = {134},
	year = {1975}}

@article{SS,
	author = {Schoen, Richard and Simon, Leon},
	fjournal = {Communications on Pure and Applied Mathematics},
	journal = {Comm. Pure Appl. Math.},
	number = {6},
	pages = {741--797},
	title = {Regularity of stable minimal hypersurfaces},
	volume = {34},
	year = {1981}}

@article{B,
	author = {Bellettini, Costante},
	fjournal = {Inventiones Mathematicae},
	journal = {Invent. Math.},
	number = {1},
	pages = {1--34},
	title = {Extensions of {S}choen-{S}imon-{Y}au and {S}choen-{S}imon theorems via iteration \`a{} la {D}e {G}iorgi},
	volume = {240},
	year = {2025}}

@article{dCP:R3,
	author = {do Carmo, Manfredo Perdig{\~a}o and Peng, Chiakuei},
	fjournal = {American Mathematical Society. Bulletin. New Series},
	journal = {Bull. Amer. Math. Soc. (N.S.)},
	number = {6},
	pages = {903--906},
	title = {Stable complete minimal surfaces in {${\bf R}\sp{3}$} are planes},
	volume = {1},
	year = {1979}}

@article{P,
	author = {Pogorelov, Alekse{\u \i} Vasil'evich},
	fjournal = {Doklady Akademii Nauk SSSR},
	journal = {Dokl. Akad. Nauk SSSR},
	number = {2},
	pages = {293--295},
	title = {On the stability of minimal surfaces},
	volume = {260},
	year = {1981}}

@article{SZ,
	author = {Shen, Yi-Bing and Zhu, Xiao-Hua},
	fjournal = {American Journal of Mathematics},
	journal = {Amer. J. Math.},
	number = {1},
	pages = {103--116},
	title = {On stable complete minimal hypersurfaces in {${\bf R}^{n+1}$}},
	volume = {120},
	year = {1998}}

@article{BDG,
	author = {Bombieri, Enrico and De Giorgi, Ennio and Giusti, Enrico},
	fjournal = {Inventiones Mathematicae},
	journal = {Invent. Math.},
	pages = {243--268},
	title = {Minimal cones and the {B}ernstein problem},
	volume = {7},
	year = {1969}}

@article{Simons:cone,
	author = {Simons, James},
	fjournal = {Proceedings of the National Academy of Sciences of the United States of America},
	journal = {Proc. Nat. Acad. Sci. U.S.A.},
	pages = {410--411},
	title = {Minimal cones, {P}lateau's problem, and the {B}ernstein conjecture},
	volume = {58},
	year = {1967}}

@article{Katz,
	author = {Katz, Mikhail},
	fjournal = {Proceedings of the American Mathematical Society},
	journal = {Proc. Amer. Math. Soc.},
	number = {2},
	pages = {591--595},
	title = {The first diameter of {$3$}-manifolds of positive scalar curvature},
	volume = {104},
	year = {1988}}

@incollection{LM:waist,
	author = {Liokumovich, Yevgeny and Maximo, Davi},
	booktitle = {Perspectives in scalar curvature. {V}ol. 2},
	pages = {799--831},
	publisher = {World Sci. Publ., Hackensack, NJ},
	title = {Waist inequality for 3-manifolds with positive scalar curvature},
	year = {2023}}

@article{CLS:vol,
	author = {Otis Chodosh and Chao Li and Douglas Stryker},
	fjournal = {Proceedings of the American Mathematical Society},
	journal = {Proc. Amer. Math. Soc.},
	number = {10},
	pages = {4501--4511},
	title = {Volume growth of 3-manifolds with scalar curvature lower bounds},
	volume = {151},
	year = {2023}}

@article{MW:vol,
	author = {Ovidiu Munteanu and Jiaping Wang},
	date-modified = {2025-11-01 06:20:15 -0700},
	journal = {to appear in Amer.\ Jour.\ Math., \url{https://arxiv.org/abs/2201.05595}},
	title = {{Geometry of three-dimensional manifolds with scalar curvature lower bound}},
	year = 2024}

@article{CSZ:end,
	author = {Cao, Huai-Dong and Shen, Ying and Zhu, Shunhui},
	fjournal = {Mathematical Research Letters},
	journal = {Math. Res. Lett.},
	number = {5},
	pages = {637--644},
	title = {The structure of stable minimal hypersurfaces in {${\bf R}^{n+1}$}},
	volume = {4},
	year = {1997}}

@book{Bray:thesis,
	author = {Bray, Hubert Lewis},
	note = {Thesis (Ph.D.)--Stanford University},
	pages = {103},
	publisher = {ProQuest LLC, Ann Arbor, MI},
	title = {The {P}enrose inequality in general relativity and volume comparison theorems involving scalar curvature},
	url = {http://gateway.proquest.com/openurl?url_ver=Z39.88-2004&rft_val_fmt=info:ofi/fmt:kev:mtx:dissertation&res_dat=xri:pqdiss&rft_dat=xri:pqdiss:9810085},
	year = {1997},
	bdsk-url-1 = {http://gateway.proquest.com/openurl?url_ver=Z39.88-2004&rft_val_fmt=info:ofi/fmt:kev:mtx:dissertation&res_dat=xri:pqdiss&rft_dat=xri:pqdiss:9810085}}

@incollection{Brendle:scal,
	author = {Brendle, Simon},
	booktitle = {Surveys in differential geometry. {V}ol. {XVII}},
	pages = {179--202},
	publisher = {Int. Press, Boston, MA},
	series = {Surv. Differ. Geom.},
	title = {Rigidity phenomena involving scalar curvature},
	volume = {17},
	year = {2012}}

@article{MS:sobolev,
	author = {Michael, James H. and Simon, Leon},
	date-modified = {2025-02-05 10:32:28 -0800},
	fjournal = {Communications on Pure and Applied Mathematics},
	journal = {Comm. Pure Appl. Math.},
	pages = {361--379},
	title = {Sobolev and mean-value inequalities on generalized submanifolds of {$R\sp{n}$}},
	volume = {26},
	year = {1973}}

@article{Brendle:iso,
	author = {Brendle, Simon},
	fjournal = {Journal of the American Mathematical Society},
	journal = {J. Amer. Math. Soc.},
	number = {2},
	pages = {595--603},
	title = {The isoperimetric inequality for a minimal submanifold in {E}uclidean space},
	volume = {34},
	year = {2021}}

@article{Li2017,
	author = {Li, Chao},
	doi = {10.1007/s00526-017-1272-z},
	fjournal = {Calculus of Variations and Partial Differential Equations},
	issn = {0944-2669,1432-0835},
	journal = {Calc. Var. Partial Differential Equations},
	mrclass = {49Q05 (53A10)},
	mrnumber = {3722074},
	mrreviewer = {Hojoo\ Lee},
	number = {6},
	pages = {Paper No. 180, 18},
	title = {Index and topology of minimal hypersurfaces in {$\Bbb{R}^n$}},
	url = {https://doi.org/10.1007/s00526-017-1272-z},
	volume = {56},
	year = {2017},
	bdsk-url-1 = {https://doi.org/10.1007/s00526-017-1272-z}}

@article{ChodoshMaximoI,
	author = {Chodosh, Otis and Maximo, Davi},
	fjournal = {Journal of Differential Geometry},
	issn = {0022-040X,1945-743X},
	journal = {J. Differential Geom.},
	mrclass = {53A10 (53C42)},
	mrnumber = {3568626},
	mrreviewer = {Min\ Ru},
	number = {3},
	pages = {399--418},
	title = {On the topology and index of minimal surfaces},
	url = {http://projecteuclid.org/euclid.jdg/1478138547},
	volume = {104},
	year = {2016},
	bdsk-url-1 = {http://projecteuclid.org/euclid.jdg/1478138547}}

@article{ChodoshMaximoII,
	author = {Chodosh, Otis and Maximo, Davi},
	doi = {10.4310/jdg/1683307005},
	fjournal = {Journal of Differential Geometry},
	issn = {0022-040X,1945-743X},
	journal = {J. Differential Geom.},
	mrclass = {53A10 (53C42)},
	mrnumber = {4584858},
	mrreviewer = {Fei-Tsen\ Liang},
	number = {3},
	pages = {431--459},
	title = {On the topology and index of minimal surfaces {II}},
	url = {https://doi.org/10.4310/jdg/1683307005},
	volume = {123},
	year = {2023},
	bdsk-url-1 = {https://doi.org/10.4310/jdg/1683307005}}

@phdthesis{Coutant,
	author = {Coutant, Antoine},
	hal_id = {tel-00802379},
	hal_version = {v1},
	keywords = {Riemann's surface ; Gluing method ; Scherk's surface ; Riemannian geometry ; Homogeneous space ; Minimal manifold ; M{\'e}thode de recollement ; Surface de Riemann ; Surface de Scherk ; G{\'e}om{\'e}trie riemanienne ; Espace homog{\`e}ne ; Surface minimale},
	month = Dec,
	number = {2012PEST1069},
	pdf = {https://theses.hal.science/tel-00802379v1/file/TH2012PEST1069_complete.pdf},
	school = {{Universit{\'e} Paris-Est}},
	title = {{Deformation and construction of minimal surfaces}},
	type = {Theses},
	url = {https://theses.hal.science/tel-00802379},
	year = {2012},
	bdsk-url-1 = {https://theses.hal.science/tel-00802379}}

@article{li-wang-finite-index,
	author = {Li, Peter and Wang, Jiaping},
	fjournal = {Mathematical Research Letters},
	journal = {Math. Res. Lett.},
	number = {1},
	pages = {95--103},
	title = {Minimal hypersurfaces with finite index},
	volume = {9},
	year = {2002}}

\end{document}